\numberwithin{equation}{section}
\numberwithin{figure}{section}
\theoremstyle{plain}
\newtheorem{thm}{\protect\theoremname}[section]
\theoremstyle{definition}
\newtheorem{defn}[thm]{\protect\definitionname}
\theoremstyle{remark}
\newtheorem{rem}[thm]{\protect\remarkname}
\theoremstyle{plain}
\newtheorem{prop}[thm]{\protect\propositionname}
\theoremstyle{plain}
\newtheorem{lem}[thm]{\protect\lemmaname}
\theoremstyle{plain}
\newtheorem{cor}[thm]{\protect\corollaryname}
\newcommand{\poincare}{Poincar\acute{e}}
\newcommand{\holder}{H\ddot{o}lder}
\providecommand{\corollaryname}{Corollary}
\providecommand{\definitionname}{Definition}
\providecommand{\lemmaname}{Lemma}
\providecommand{\propositionname}{Proposition}
\providecommand{\remarkname}{Remark}
\providecommand{\theoremname}{Theorem}
\begin{document}

\title{Constant $Q$-curvature metrics on conic 4-manifolds}

\author{Hao Fang}

\thanks{Part of Hao Fang's work is supported by a Simons Collaboration Grant.
Part of Biao Ma's work is supported by Graduate College Summer Fellowship
of University of Iowa.}

\address{14 MacLean Hall, University of Iowa, Iowa City, IA, 52242}

\email{hao-fang@uiowa.edu}

\author{Biao Ma}

\email{biaoma@uiowa.edu}
\begin{abstract}
We consider the constant $Q$-curvature metric problem in a given
conformal class on a conic 4-manifold and study related differential
equations. We define subcritical, critical, and supercritical conic
4-manifolds. Following \cite{troyanov1991prescribing} and \cite{chang1995extremal},
we prove the existence of constant $Q$-curvature metrics in the subcritical
case. For conic 4-spheres with two singular points, we prove the uniqueness
in critical cases and nonexistence in supercritical cases. We also
give the asymptotic expansion of the corresponding PDE near isolated
singularities.
\end{abstract}

\maketitle

\section{Introduction }

In this paper, we study Branson's $Q$-curvature on conic 4-manifolds.
First, we give a brief introduction to the study of $Q$-curvature
in conformal geometry, especially on 4-manifolds. Then, we discuss
conic 4-manifolds. Finally, we present our main results.

Let $(M^{4},g)$ be a compact 4 dimensional smooth Riemannian manifold.
Branson's $Q$-curvature \cite{branson1991explicit} is defined as
\[
Q_{g}=-\frac{1}{6}\Delta R-\frac{1}{2}|Ric|^{2}+\frac{1}{6}R^{2},
\]
where $R$ and $Ric$ are the scalar curvature and the Ricci curvature
tensor of $g$, respectively. Similar to the role of the Gaussian
curvature in the surface theory, $Q$-curvature is related to the
geometry of 4-manifolds by the following Gauss-Bonnet-Chern formula:
\begin{equation}
\int_{M}QdV_{g}=8\pi^{2}\chi(M)-\int_{M}\frac{1}{4}|W|^{2}dV_{g},\label{eq:Gauss Bonnet}
\end{equation}
where $W$ is the Weyl tensor. 

Let 
\[
k_{g}=\int_{M}QdV_{g}.
\]
Since the Weyl tensor is locally conformally invariant, (\ref{eq:Gauss Bonnet})
implies that $k_{g}$ is a global conformal invariant of $M$. Suppose
that $g_{w}=e^{2w}g$ is another metric in the same conformal class,
where $w\in C^{\infty}$. The corresponding $Q$-curvature for $g_{w}$
is given by 
\[
P_{g}w+Q_{g}=e^{4w}Q_{g_{w}}.
\]
Here $P_{g}$ is the Paneitz operator
\[
P_{g}w=\Delta_{g}^{2}w+\mathrm{div}\left(\frac{2}{3}Rg-2Ric\right)dw.
\]
An important property of the Paneitz operator is its conformal transformation
law. Namely, $P_{g_{w}}u=e^{-4w}P_{g}$ for $g_{w}=e^{2w}g$. In \cite{branson1991explicit}\cite{branson1992estimates,chang1995extremal},
the authors have extensively studied the Paneitz operator, $Q$-curvature
and their relation to zeta functional determinants of conformally
covariant operators. It is worth noting that the Paneitz operator
is also a special case of more generally defined GJMS operators, cf.
\cite{graham1992conformally}. 

A natural question in conformal geometry is finding constant $Q$-curvature
metrics in a given conformal class. From an analytical point of view,
the problem is equivalent to the following
\begin{equation}
P_{g}w+Q_{g}=c\cdot e^{4w},\label{eq:Constant Q equation}
\end{equation}
where $c$ is a constant. 

We state some important results in this field. Chang and Yang \cite{chang1995extremal}
have established the existence of constant $Q$-curvature metic if
the conformal metric class satisfies following conditions: \renewcommand{\labelenumi}{\alph{enumi})}  
\begin{enumerate}
\item $P_{g}$ is non-negative, 
\item ${\rm {Ker}}\ P_{g}=\{constants\}$, 
\item $k_{g}<16\pi^{2}$.
\end{enumerate}
The number $16\pi^{2}$ comes from a sharp Moser-Trudinger type inequality
due to Adams\cite{adams1988sharp}, which is critical in the arguments
of Chang-Yang\cite{chang1995extremal}. We should remark that this
existence result covers many cases. In particular, Gursky \cite{Gursky1999}has
shown that conditions (a), (b), and (c) are satisfied when (i) Yamabe
constant $Y_{g}\geq0$, (ii) $k_{g}\geq0$, and (iii) $M$ not conformal
to $S^{4}$. If $k_{g}>16\pi^{2}$, Djadli and Malchiodi \cite{djadli2008existence}
are able to establish the existence of constant $Q$-curvature metrics
if $k_{g}\not=16l\pi^{2},l=1,2,3\cdots$ , by a delicate min-max argument.
For $k_{g}=16\pi^{2}$ , some existence results are obtained by J.
Li, Y. Li and P. Liu\cite{li2012q} under certain additional conditions.
In even dimension $n>4$, constant $Q$-curvature problems have been
studied in \cite{NDIAYE20071,BAIRD2009221,brendle2003global}.

We now introduce conic 4-manifolds. Let $(M^{4},g_{0})$ be a compact
smooth Riemannian 4-manifold. The conical singularities are represented
by the conformal divisor 
\[
D=\sum_{i=1}^{k}p_{i}\beta_{i},
\]
where $p_{i}\in M$ and $0>\beta_{i}>-1$. For simplicity, we assume
that $\beta_{1}\leq\beta_{2}\leq\cdots\leq\beta_{k}$. Let $\gamma(x)$
be a function in $C^{\infty}(M^{4}-\{p_{i}\})$ with following local
forms: in a neighborhood of $p_{i}$:
\begin{equation}
\gamma(x)=\sum_{i=1}^{k}\beta_{i}\log(r_{i})\eta_{i}(x)+f_{i}(x),\label{eq:singular term}
\end{equation}
where $r_{i}=\mathrm{dist}_{g_{0}}(x,p_{i})$ is the distance to $p_{i}$,
$f_{i}(x)$ is locally smooth near $p_{i}$, and $\eta_{i}(x)$ is
a local cut-off function near $p_{i}.$ Consider a singular metric
\begin{equation}
g_{D}=e^{2\gamma}g_{0}.\label{eq:g_D definition}
\end{equation}
$g_{D}$ is a metric conformal to $g_{0}$ on $M^{4}-\{p_{i}\}$ which
has a conical singularity at each $p_{i}$. Let $dV_{D}$ be the volume
element of $g_{D}$. Let $H^{2}(dV_{D})=W^{2,2}(dV_{D})$ be the corresponding
Sobolev space with respect to the measure $dV_{D}$. We define the
conformal class of $g_{D}$
\[
[g_{D}]:=\{g_{w}=e^{2w}g_{D}:w\in H^{2}(dV_{D})\cap C^{\infty}(M-\{p_{i}\})\}.
\]
Note $[g_{D}]$ depends only on $(M^{4},g_{0})$ and $D$. Function
$\log|x-p_{i}|$ is not in $H_{loc}^{2}(dV_{D})$ and neither is $\gamma(x)$.
Let $g_{1}\in[g_{D}]$. We call a 4-tuple $(M^{4},g_{0},D,g_{1})$
a \textbf{conic 4-manifold. }Here\textbf{ $g_{0}$ }is called the
background metric and $g_{1}$ is called the conic\textbf{ }metric.
We remark that our definition of conic manifolds can be generalized
to general dimensions. We also note here that our definition of conic
singularity is in the sense of Reimannian geometry. See, for example,
\cite{cheeger1996lower,cheeger1997structure}. In particular, our
definition is different from similar notation that is used in studies
of K$\ddot{a}$hler geometry.

We study the conformal geometry of conic manifolds. For conic 4-manifolds,
a corresponding Gauss-Bonnet-Chern formula first proved in \cite{buzano2015chern}
can be stated as follows:
\begin{equation}
k_{g_{1}}=\int_{M}Q_{g_{1}}dV_{g_{1}}=\int_{M}Q_{g_{0}}dV_{g_{0}}+8\pi^{2}\sum_{i=1}^{k}\beta_{i}.\label{eq:add1}
\end{equation}
We will also give a detailed proof directly following the line of
Troyanov\cite{troyanov1991prescribing} in Section 3.

In this paper, we study the constant $Q$-curvature metric for conic
4-manifolds. Our motivations are both analytical and geometrical.
Analytically, Moser-Trudinger-Adams inequality gives the key estimate
in \cite{branson1992estimates,chang1995extremal,djadli2008existence}.
We find out that that Moser-Trudinger-Adams inequality can be generalized
under our new setting. Geometrically, the study of stability condition
and conic singularities plays a central role in recent developments
in K$\ddot{a}$hler geometry\cite{Chen2014K,Chen2015K,Chen2013K,Tian1996,tian2015k}.
It is our intention to find a suitable ``stability'' condition in
conformal geometry. 

Finally, we describe our new results. For conic 4-manifold $(M,g_{0},D,g_{1})$,
we study the existence of a metric $g_{w}\in[g_{D}]$ such that the
$Q$-curvature of $g_{w}$ is a constant, which is equivalent to finding
(weak) solutions in $H^{2}(dV_{D})=W^{2,2}(dV_{D})$ of the equation:
\begin{equation}
P_{g_{D}}w+Q_{D}=c\cdot\exp(4w).\label{eq:The solution}
\end{equation}
In fact, $H^{2}(dV_{D})$ is equivalent to $H^{2}(dV_{0})$ space
of $g_{0}$ and by $\poincare$ inequality, the $H^{2}(dV_{0})$ norm
of $w$ is given by 
\[
\|u\|_{H^{2}(dV_{0})}^{2}=\int_{M}uP_{g_{0}}udV_{0}+\|u\|_{L^{2}(dV_{0})}^{2},
\]
when conditions (a) and (b) are satisfied, see Section 2.

Our approach to study conic 4-manifolds comes from pioneering works
on conic surfaces by Troyanov and others. Historically, Troyanov \cite{Troyanov1989}\cite{troyanov1991prescribing}
systematically studied the prescribed curvature problems on conic
surfaces. In particular, he \cite{troyanov1991prescribing} classified
conformal metrics on conic Riemann surfaces into three categories:
subcritical, critical, and supercritical. He showed that in subcritical
cases, there is a unique constant Gaussian curvature metric. By a
geometric construction, Luo and Tian \cite{LuoTian} proved that with
more than 2 conic points the solution exists if and only if in subcritical
case. Chen and Li \cite{chen1995kinds}proved the same results by
classifying solutions of the corresponding PDE. Chen and Li also proved
that in the critical case, the only solutions are radial symmetric,
like an American football. In a recent work, the first named author
and Lai described the limiting process when a subcritical metric deforms
continuously towards a critical one\cite{Fang2016}. 

Motivated by Troyanov's classification, we give the following definition.
\begin{defn}
\renewcommand{\labelenumi}{\roman{enumi})}  Let $D=\sum_{i=1}^{k}\beta_{i}p_{i}$
and $\beta_{1}=\min\{\beta_{i}\}$. A conic 4-manifold $(M,g_{0},D,g_{1})$ 
\end{defn}

\begin{enumerate}
\item is called \textbf{subcritical, }if\textbf{ $k_{g_{0}}+8\pi^{2}(\sum_{i}\beta_{i})<8\pi^{2}(2+2\beta_{1})$; }
\item is called \textbf{critical,} if\textbf{ $k_{g_{0}}+8\pi^{2}(\sum_{i}\beta_{i})=8\pi^{2}(2+2\beta_{1})$;}
\item is called \textbf{supercritical, }if\textbf{ $k_{g_{0}}+8\pi^{2}(\sum_{i}\beta_{i})>8\pi^{2}(2+2\beta_{1})$.}
\end{enumerate}
\begin{rem}
For general even dimension $n$, similarly, we can define conic $n$-manifold
$(M^{n},g_{0},D,g_{1})$ with $D=\sum_{i=1}^{k}\beta_{i}p_{i}$. Let
$\beta_{1}=\min\{\beta_{i}\}$. Let $Q_{g_{0}}$ be the corresponding
$Q$-curvature. Set $k_{g_{0}}=\int_{M}Q_{g_{0}}dV_{g}$. Then, one
can define $M$ to be a subcritical conic manifold if $k_{g_{0}}+\gamma_{n}(\sum_{i=1}^{k}\beta_{i})<\gamma_{n}(2+2\beta_{1})$,
a critical conic manifold if $k_{g_{0}}+\gamma_{n}(\sum_{i=1}^{k}\beta_{i})=\gamma_{n}(2+2\beta_{1})$,
or a supercritical conic manifold if $k_{g_{0}}+\gamma_{n}(\sum_{i=1}^{k}\beta_{i})>\gamma_{n}(2+2\beta_{1})$,
where $\gamma_{n}=\frac{(n-1)!|S_{n}|}{2}$.\label{rem:For-general-even-n-subcritical-case}
\end{rem}

In this paper, we primarily consider subcritical cases for general
4-manifolds and critical cases on the sphere $S^{4}$. 

Our first result is the following:
\begin{thm}
\label{thm:Subcritical}Let $(M^{4},g_{0},D,g_{D})$ be a conic 4-manifold.
Suppose that $P_{g_{0}}$ is nonnegative and its kernel contains only
constant functions on $(M^{4},g_{0}$). Let $\beta_{1}=min\{\beta_{i}\}$.
Suppose $k_{g_{0}}+8\pi^{2}(\sum_{i}\beta_{i})<8\pi^{2}(2+2\beta_{1})$.
Then there is a conformal metric $g_{w}\in[g_{D}]$ represented by
$w\in C^{\tau}(M)\cap C^{\infty}(M-\{p_{i}\})$ for $0<\tau<min\{2,4(1+\beta_{1})\}$
such that $(M^{4},g_{w})$ has constant $Q$-curvature.
\end{thm}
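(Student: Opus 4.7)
\emph{Approach.} My plan is to follow Chang–Yang's variational approach in \cite{chang1995extremal}, adapted to the singular conformal class $[g_{D}]$ via a conic version of Adams' inequality whose sharp constant is governed by the worst cone angle $\beta_{1}$. Since $g_{w}=e^{2w}g_{D}$ has constant $Q$-curvature precisely when $w$ weakly solves $P_{g_{D}}w+Q_{g_{D}}=c\,e^{4w}$ with $c=k_{g_{D}}/\!\int e^{4w}\,dV_{D}$, I consider the Liouville-type functional
\[
II(w) \;=\; \tfrac{1}{2}\!\int_{M} w\,P_{g_{D}}w\,dV_{D} + \int_{M} Q_{g_{D}}w\,dV_{D} - \tfrac{k_{g_{D}}}{4}\log\!\Bigl(\tfrac{1}{V_{D}}\!\int_{M} e^{4w}\,dV_{D}\Bigr)
\]
on $H^{2}(dV_{D})=H^{2}(dV_{0})$, whose critical points are the desired solutions. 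Using the conformal covariance $P_{g_{D}}(\cdot)\,dV_{D}=P_{g_{0}}(\cdot)\,dV_{0}$ and $Q_{g_{D}}\,dV_{D}=(P_{g_{0}}\gamma+Q_{g_{0}})\,dV_{0}$, together with (\ref{eq:add1}), the quadratic term reduces to $\langle w,P_{g_{0}}w\rangle_{dV_{0}}$, which by hypotheses (a)--(b) controls $\|w-\bar w\|_{H^{2}(dV_{0})}^{2}$.

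\emph{Key inequality and coercivity.} The central analytic ingredient, and the main obstacle, is a sharp Adams--Moser--Trudinger inequality tailored to the conic measure $dV_{D}$: for every $w\in H^{2}(dV_{0})$ with mean zero against $dV_{D}$,
\[
\log\!\int_{M} e^{4w}\,dV_{D} \;\leq\; \frac{1}{16\pi^{2}(1+\beta_{1})}\!\int_{M} w\,P_{g_{0}}w\,dV_{0} + C.
\]
The constant $16\pi^{2}(1+\beta_{1})$ is sharp; it reflects the reduced concentration cap at the most singular conic point and parallels Troyanov's sharp $4\pi(1+\beta_{1})$ in the surface case. To prove it, I would combine the standard Adams inequality away from the $p_{i}$'s with a local concentration analysis near each cone tip, testing against cutoff Green-type competitors for $P_{g_{0}}$ in the spirit of \cite{troyanov1991prescribing}. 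Once in hand, the subcritical hypothesis
\[
k_{g_{D}} \;=\; k_{g_{0}}+8\pi^{2}\!\sum_{i}\beta_{i} \;<\; 8\pi^{2}(2+2\beta_{1}) \;=\; 16\pi^{2}(1+\beta_{1})
\]
immediately yields a coercivity estimate $II(w)\geq \delta\,\langle w,P_{g_{0}}w\rangle_{dV_{0}} - C'$ for some $\delta>0$ on $\{\bar w_{D}=0\}$.

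\emph{Minimization and regularity.} Coercivity produces an $H^{2}$-bounded minimizing sequence $\{w_{n}\}$; weak lower semicontinuity handles the quadratic form, and the Adams inequality (which delivers $L^{p}(dV_{D})$ compactness of $e^{4w_{n}}$ for every $p>1$) lets me pass to the limit in the logarithmic term, so a weak limit $w_{\ast}$ solves (\ref{eq:The solution}) in the distributional sense. Away from the singular set, a bootstrap applied to $P_{g_{0}}w_{\ast}=e^{4\gamma}(c\,e^{4w_{\ast}}-Q_{g_{D}})$ gives $w_{\ast}\in C^{\infty}(M\setminus\{p_{i}\})$. Near each $p_{i}$, since the source $e^{4w_{\ast}}\,dV_{D}$ has local density of order $r_{i}^{4\beta_{i}}$, I invoke the asymptotic expansion for the Paneitz equation near an isolated conic singularity (developed later in the paper) to conclude $w_{\ast}\in C^{\tau}(M)$ for every $\tau<\min\{2,4(1+\beta_{1})\}$; the upper bound $4(1+\beta_{1})$ is the leading Hölder exponent produced by twice integrating the singular source $r^{4\beta_{1}}$ against the Paneitz Green kernel.
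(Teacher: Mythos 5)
Your overall strategy (minimize the ${\rm II}$ functional, coercivity from a conic Adams inequality with constant tied to $\beta_{1}$, then bootstrap) is the paper's strategy, but your key inequality is stated with the wrong constant and is false as written. The sharp statement (the paper's Theorem \ref{thm:Adams ineq on conic 4 mfd with paneitz}) is $\log\int_{M}e^{4|w-\bar w|}dV_{D}\leq C+\frac{1}{8\pi^{2}(1+\beta_{1})}\int_{M}wP_{g_{0}}w\,dV_{0}$, not $\frac{1}{16\pi^{2}(1+\beta_{1})}$. Already in the smooth case $\beta_{1}=0$ the sharp constant is $\frac{1}{8\pi^{2}}$ (Beckner's inequality on $S^{4}$, with near-equality along concentrating Möbius factors), so your bound fails by testing a standard bubble at a smooth point whenever $\beta_{1}>-\tfrac12$, and by a bubble concentrating at the worst cone point (where the local Adams constant is $32\pi^{2}(1+\beta_{1})$, giving $\frac{1}{8\pi^{2}(1+\beta_{1})}$ after the mean-value trick) in general. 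The error also shows up in your numerology: with your normalization of ${\rm II}$ (coefficients $\tfrac12$, $1$, $\tfrac{k}{4}$), your inequality would give coercivity for all $k_{g_{D}}<32\pi^{2}(1+\beta_{1})$, i.e.\ it would "prove" the theorem well beyond the subcritical range, which should have been a red flag since $16\pi^{2}$ is the critical threshold even on the smooth sphere. With the corrected constant $\frac{1}{8\pi^{2}(1+\beta_{1})}$ your coercivity computation gives exactly the condition $k_{g_{D}}<16\pi^{2}(1+\beta_{1})$, i.e.\ the subcritical hypothesis, and the argument then matches the paper's proof.

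Two further points. First, your plan to establish the conic Adams inequality by "testing against cutoff Green-type competitors" only addresses sharpness, not validity; the paper proves the local Euclidean inequality via Talenti's rearrangement comparison and Tarsi's lemma (following Lam--Lu), and globalizes it through the Green's function of $\Delta$ and of $\sqrt{P}$ in the spirit of Branson--Chang--Yang. Relatedly, controlling the linear term $\int_{M}Q_{g_{D}}w\,dV_{D}$ in the coercivity estimate is not automatic: one needs $Q_{g_{D}}\in L^{2}(dV_{D})$ (Corollary \ref{cor:-for-.lemforregularity Q_rho}), since $Q_{g_{D}}\sim r^{-2-4\beta_{i}}$ near $p_{i}$. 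Second, your regularity step is circular: the asymptotic expansion of Section 7 is derived for the flat model $\Delta^{2}u=e^{4u}$ and presupposes the $C^{4(1+\beta)-\epsilon}$ regularity that Theorem \ref{thm:Subcritical} is supposed to provide. The paper instead bootstraps directly: writing $P_{g_{0}}w=(e^{4w}-Q_{g_{D}})e^{4\gamma}$, one has $Q_{g_{D}}e^{4\gamma}\in L^{p}(dV_{0})$ for $p<2$ and $e^{4w}e^{4\gamma}\in L^{q}(dV_{0})$ for $q<\frac{1}{1-(1+\beta_{1})}$ by Adams, whence $w\in W^{4,q}$ and Morrey embedding gives $C^{\tau}$ for $\tau<\min\{2,4(1+\beta_{1})\}$; note the cap $2$ comes from the $L^{2-\epsilon}$ integrability of $Q_{g_{D}}e^{4\gamma}$ (absent only in the conformally flat case), a point your sketch leaves unexplained.
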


\begin{rem}
After the draft of this paper, it was brought to our attention that
Hyder-Lin-Wei \cite{hyder20193} have studied the solutions of the
equation (1.9) on spheres. See also Maalaoui \cite{maalaoui2016prescribing}
for a different approach. Hyder-Lin-Wei \cite{hyder20193} give a
solvability condition which is equivalent to our subcritical condition
on $S^{4}$. Theorem 1.3 also covers non-flat cases. Hence, our results
are more general geometrically. Interestingly, we achieve the same
solvability condition for standard 4-sphere via different approaches.
Inspired by \cite{hyder20193}, we can apply the same techniques of
this paper to prove the existence of constant $Q$-curvature metrics
in the subcritical cases defined in Remark \ref{rem:For-general-even-n-subcritical-case}
with two assumptions a) Paneitz operator $P_{g}\geq0$ and b) ${\rm Ker}P_{g}=\{constants\}$.
However, it seems difficult to check a) and b) except for conformally
flat manifolds.
\end{rem}

We summarize our approach to prove Theorem \ref{thm:Subcritical}.
In the subcritical case, following \cite{chang1995extremal} and \cite{troyanov1991prescribing},
we study the ${\rm II}$ functional defined in \cite{branson1991explicit}.
For the ${\rm II}$ functional of a conic 4-manifold, we obtain the
coerciveness and hence the existence of a minimizer. We use a conic
version of Moser-Trudinger-Adams inequality, which on Euclidean domains
has been proved by Lam and Lu \cite{Lam-Lu}. Following \cite{branson1992estimates}
and \cite{chang1995extremal}, we generalize this inequality to general
conic manifolds.

Following works of Troyanov, we also consider solutions of (\ref{eq:The solution})
with 2 singular points on conic 4 spheres. By a stereographic projection
and a conformal transformation, we consider the PDE on the punctured
Euclidean space $\mathbb{R}^{4}\backslash\{0\}$, 
\begin{equation}
\Delta^{2}u=e^{4u},\label{eq: PDE in R^4}
\end{equation}
where $u$ has singularities at 0 and $\infty$. By our definition
of conic singularities, $u$ behaves logarithmically at $0$ and infinity.
Note $-\frac{1}{8\pi^{2}}\log|x-p|$ is the Green's function for bilaplacian
on $\mathbb{R}^{4}$ at $p\in\mathbb{R}^{4}$. In order to emphasize
singularities in (\ref{eq: PDE in R^4}), we can equivalently write
the following PDE on $\mathbb{R}^{4}:$
\begin{align}
\Delta^{2}u & =e^{4u}+8\pi^{2}\beta_{0}\delta_{0},\label{eq:2 singualrities, R^4 equation with 2 singular source term}
\end{align}
where $\delta_{0}$ is the Dirac measure at $0$. We further require
that as $|x|\to\infty$, 
\[
u(x)\sim-(2+\beta_{1})\log|x|,
\]
to indicate the conic singularity at infinity. (\ref{eq:2 singualrities, R^4 equation with 2 singular source term})
is a special form of the following
\begin{align}
\Delta^{2}u(x) & =e^{4u(x)}+8\pi^{2}\sum_{i=1}^{k-1}\beta_{i}\delta_{p_{i}}(x),\label{eq:R^4 equation with singular source term}\\
u(x) & \sim-(2+\beta_{k})\log|x|,\ |x|\to\infty,\nonumber 
\end{align}
where $\delta_{p_{i}}$ is the Dirac measure at $p_{i}$. Each solution
of (\ref{eq:R^4 equation with singular source term}) represents a
conic constant $Q$-curvature metric on $S^{4}$ with conformal divisor
$D=\sum_{i=1}^{k-1}p_{i}\beta_{i}+\beta_{k}\infty,$ where $\infty$
is the north pole on $S^{4}$, due to the stereographic projection.

To study (\ref{eq: PDE in R^4}), we first discuss radial symmetric
solutions. Using a cylindrical coordinate, we reduce the PDE to a
4$^{th}$ order ODE. 
\begin{equation}
v''''(t)-4v''(t)=e^{4u},\label{eq:ODE}
\end{equation}
where $t\in\mathbb{R}.$ We have the following:
\begin{thm}
\label{thm:ODE}There is a family of solutions $v_{\alpha}$ of (\ref{eq:ODE}),
parametrized by $\alpha=1+\beta>0$ such that $v'_{\alpha}(t)$ goes
to $\pm\alpha$ as $t$ goes to $\pm\infty$\label{Them:ODE result}.
Differing by a constant and a translation in $t$, these are the only
solutions with linear growth at infinity.
\end{thm}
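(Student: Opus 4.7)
The plan is to prove this by a shooting argument based on a first integral of (\ref{eq:ODE}). Multiplying the ODE by $v'$ and collecting derivatives, one verifies that
\[
H(t):=v'(t)\,v'''(t)-\tfrac12\bigl(v''(t)\bigr)^{2}-2\bigl(v'(t)\bigr)^{2}-\tfrac14 e^{4v(t)}
\]
satisfies $\frac{d}{dt}H=v'\bigl(v''''-4v''-e^{4v}\bigr)=0$, so $H$ is constant on any solution. If $v$ has linear growth with $v\to-\infty$ at both ends and $v'',v'''\to 0$, evaluating $H$ at $\pm\infty$ yields $H\equiv-2\alpha^{2}$, where $\alpha$ is the absolute value of the asymptotic slope.

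Next, since (\ref{eq:ODE}) and the asymptotic data are invariant under $t\mapsto -t$, I would look for even solutions and translate so the axis of symmetry sits at $t=0$; then $v'(0)=v'''(0)=0$, and the conservation law at the origin reduces to
\[
\tfrac12 v''(0)^{2}+\tfrac14 e^{4v(0)}=2\alpha^{2}.
\]
A symmetric candidate is therefore determined by a single parameter $\xi:=v(0)\in\bigl(-\infty,\tfrac14\log 8\alpha^{2}\bigr]$, with $v''(0)=-\sqrt{4\alpha^{2}-\tfrac12 e^{4\xi}}$ (the negative square root, so that $v$ has a local maximum at the origin).

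The core existence step is a shooting analysis as $\xi$ varies. At the upper endpoint $\xi=\tfrac14\log 8\alpha^{2}$ one has $v''(0)=0$ but $v''''(0)=e^{4\xi}>0$, so $v''$ becomes positive and the forward solution blows up upward, spoiling the asymptotic. For $\xi\to-\infty$ the nonlinear term $e^{4v}$ is negligible near $t=0$, and the forward solution tracks the linearization of the ODE subject to $v'(0)=v'''(0)=0,\ v''(0)\approx-2\alpha$, namely $\xi+\tfrac{\alpha}{2}-\tfrac{\alpha}{2}\cosh(2t)$, which carries a nonzero $\cosh(2t)$ mode and hence decays super-linearly rather than linearly. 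The coefficients of the $e^{\pm 2t}$ modes in the asymptotic of the forward solution at $+\infty$ depend continuously on $\xi$ and switch sign between the two extremes, so an intermediate-value argument produces a unique $\xi_{*}=\xi_{*}(\alpha)$ at which both exponential modes vanish. The resulting forward solution satisfies $v(t)\sim-\alpha t+\text{const}$ at $+\infty$; reflecting across $0$ gives the desired $v_{\alpha}$ on all of $\mathbb{R}$.

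For uniqueness, any solution $w$ with linear growth at both infinities must satisfy $H(w)\equiv-2\tilde\alpha^{2}$ for $\tilde\alpha=|w'(\pm\infty)|$, and $H$ forces the two slopes to have equal magnitude. The linearization of the ODE at $v=-\infty$ is $v''''-4v''=0$, with characteristic roots $0,0,\pm2$; suppressing the $e^{\pm 2t}$ modes at each end is a codimension-two condition, which together with the $t$-translation symmetry and the constraint $H\equiv-2\tilde\alpha^{2}$ leaves only a one-parameter family of asymptotic data at each end. A symmetrization argument then identifies the unique even representative with $v_{\tilde\alpha}$, so every solution with linear growth equals $v_{\tilde\alpha}(t-t_{0})$ for some $\tilde\alpha>0$ and $t_{0}\in\mathbb{R}$. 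The main technical obstacle is the shooting step: controlling global existence of the IVP solution during the shoot (via $H$ as a Lyapunov-like quantity) and proving that the $e^{2t}$-mode coefficient varies continuously in $\xi$ and genuinely crosses zero, rather than merely touching it from one side.
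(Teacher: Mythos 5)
Your first integral is the paper's: up to an overall factor $-4$, your $H$ is exactly (\ref{eq:first intergral 1})--(\ref{eq:first integral 2}), and your overall plan --- reduce to even solutions with $v'(0)=v'''(0)=0$ and shoot in a single parameter --- is the same strategy as Section 6. The difficulty is that the step you defer as ``the main technical obstacle'' is essentially the entire content of the paper's proof, and the replacement you sketch does not work as stated. Your intermediate-value argument is run on ``the coefficients of the $e^{\pm2t}$ modes at $+\infty$,'' but such coefficients are only defined for forward solutions that exist globally and stay in the regime where $e^{4v}$ is a small perturbation of $v''''-4v''=0$; at the upper end of your shooting interval the solution blows up with $e^{4v}$ dominant, so there is no mode decomposition, no sign, and no continuity statement on which to base the intermediate value theorem. (Moreover, only the growing $e^{2t}$ mode must vanish at $+\infty$; demanding that ``both exponential modes vanish'' is a codimension-two condition that a one-parameter shoot cannot generically meet.) The paper circumvents precisely this by choosing an elementary dichotomy: with $x_{2}=v''$ it sets $\mathcal{Q}=\{q: x_{2}(t)<0\ \text{for all}\ t>0\}$, proves $\mathcal{Q}$ is nonempty (Lemma \ref{lem:4p+q}), bounded (Lemma \ref{lem:existsqx2positive}), and an interval via the comparison Lemma \ref{lem:ODE (Monotonicity-lemma)} (which exploits $x_{4}'=4x_{1}x_{4}$, $x_{4}>0$), and then the real work, Theorem \ref{thm:For-any-fixed6.6}, is the case analysis showing that at $q_{0}=\sup\mathcal{Q}$ one indeed has $v''\to0$ and $v'$ bounded, i.e.\ linear growth. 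That analysis has no counterpart in your proposal, and the conserved $H$ cannot substitute for it as a ``Lyapunov-like'' control: it is indefinite and constant, not decreasing, so it does not by itself prevent blow-up along the shoot.

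The uniqueness half has the same defect. ``Suppressing the exponential modes is codimension two, which with translation and $H\equiv-2\tilde\alpha^{2}$ leaves a one-parameter family'' is a formal dimension count near the non-hyperbolic equilibria $(a,0,-4a,0)$ of (\ref{eq:system}) (eigenvalues $0,\pm2,4a$, with a whole line of fixed points); even made rigorous with center/stable-manifold theory it would only predict the expected dimension of the solution set, not that every linear-growth solution is even up to translation, nor would it exclude several distinct families on the same level set of $H$. The paper's uniqueness theorem at the end of Section 6 proves this with a concrete mechanism: normalize $v'(0)=0$, compare a putative second solution with data $(0,a,b,c)$ (or its reflection $(-1)^{i}z_{i}(-t)$) against the symmetric one via Lemma \ref{lem:ODE (Monotonicity-lemma)}, and conclude that unless the data coincide the comparison forces $z_{1}\to\pm\infty$, contradicting boundedness. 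Some comparison principle of this type --- or an equivalent substitute adapted to your constrained family $\tfrac12 v''(0)^{2}+\tfrac14 e^{4\xi}=2\alpha^{2}$ --- is the ingredient missing from both the existence and the uniqueness halves of your proposal.
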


We remark that in a paper \cite{Hyder2019} by Hyder, Mancini and
Martinazzi, the existence of radial symmetric solutions for constant
Q-curvature metric for $S^{n}$, $n\geq4$ is established, which includes
Theorem \ref{thm:ODE} as a special case. We thank a referee for pointing
this out. Our proof is independent and different in flavor.

Furthermore, we have the following uniqueness theorem:
\begin{thm}
\label{thm:Uniquness result}A constant Q-curvature 4-sphere with
2-singular points must be a radial symmetric conic sphere. Both singularities
have the same index. Under a cylindical coordinate, the metric is
given by one of the solutions described in Theorem \ref{Them:ODE result}
up to a translation.
\end{thm}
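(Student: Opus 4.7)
The plan is to reduce Theorem \ref{thm:Uniquness result} to the ODE classification of Theorem \ref{thm:ODE} by first establishing that the conformal factor is radially symmetric. Via a stereographic projection that sends the two conic points to $0$ and $\infty$ in $\mathbb{R}^{4}$, the problem becomes that of classifying solutions $u\in C^{\infty}(\mathbb{R}^{4}\setminus\{0\})$ of $\Delta^{2}u=e^{4u}$ with conic asymptotics
\[
u(x)=\beta_{0}\log|x|+O(1)\text{ as }x\to0,\qquad u(x)=-(2+\beta_{1})\log|x|+O(1)\text{ as }|x|\to\infty,
\]
for some $\beta_{0},\beta_{1}\in(-1,0)$. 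The refined asymptotic expansion of $u$ near $0$ and $\infty$ established earlier in the paper, together with the global integrability of $e^{4u}$ (a consequence of the Gauss--Bonnet--Chern formula \eqref{eq:add1}), yields the integral representation
\[
u(x)=-\frac{1}{8\pi^{2}}\int_{\mathbb{R}^{4}}\log|x-y|\,e^{4u(y)}\,dy+\beta_{0}\log|x|+c
\]
for some constant $c\in\mathbb{R}$.

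Next I will apply the moving-plane method to show $u$ is radially symmetric about the origin. Fix a unit vector $e\in\mathbb{R}^{4}$ and, for $\lambda>0$, let $x^{\lambda}$ be the reflection of $x$ across $T_{\lambda}=\{x\cdot e=\lambda\}$, set $\Sigma_{\lambda}=\{x\cdot e>\lambda\}$, and $u_{\lambda}(x):=u(x^{\lambda})$. Subtracting the integral representations of $u$ and $u_{\lambda}$ gives
\[
u(x)-u_{\lambda}(x)=-\frac{1}{8\pi^{2}}\int_{\Sigma_{\lambda}}\bigl(\log|x-y|-\log|x^{\lambda}-y|\bigr)\bigl(e^{4u(y)}-e^{4u_{\lambda}(y)}\bigr)dy+\beta_{0}\log\frac{|x|}{|x^{\lambda}|}.
\]
For $x\in\Sigma_{\lambda}$ and $\lambda\geq 0$ the kernel is nonpositive on $\Sigma_{\lambda}\times\Sigma_{\lambda}$, and since $|x|>|x^{\lambda}|$ and $\beta_{0}<0$ the last term is also nonpositive. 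A standard moving-plane argument, starting at $\lambda$ large (where the asymptotics at infinity give $u\leq u_{\lambda}$ on $\Sigma_{\lambda}$) and decreasing $\lambda$ towards $0$, will show $u\leq u_{\lambda}$ on $\Sigma_{\lambda}$ for every $\lambda>0$. The key point for stopping at $\bar{\lambda}=0$ is that the ghost singularity of $u_{\lambda}$ at the reflected origin $2\lambda e$ lies inside $\Sigma_{\lambda}$ whenever $\lambda>0$, so any coincidence $u\equiv u_{\bar{\lambda}}$ on $\Sigma_{\bar{\lambda}}$ with $\bar{\lambda}>0$ would force a second conic singularity of $u$ at $2\bar{\lambda}e\neq 0$, contradicting the hypothesis of exactly two singularities. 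Reversing the direction of motion yields $u\equiv u_{0}$, i.e.\ symmetry across $\{x\cdot e=0\}$. Since $e$ is arbitrary, $u$ is radial about the origin.

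Writing $u(x)=\varphi(|x|)$ and passing to cylindrical coordinates $r=e^{t}$ via $v(t)=\varphi(e^{t})+t$, the conformal equivalence $e^{2u}|dx|^{2}=e^{2v}(dt^{2}+d\sigma_{S^{3}}^{2})$ reduces $\Delta^{2}u=e^{4u}$ to the fourth-order ODE \eqref{eq:ODE} for $v(t)$. By Theorem \ref{thm:ODE}, $v$ coincides, up to a translation in $t$ and an additive constant, with one of the solutions $v_{\alpha}$, with $v_{\alpha}'(t)\to-\alpha$ as $t\to+\infty$ and $v_{\alpha}'(t)\to+\alpha$ as $t\to-\infty$ for a unique $\alpha>0$. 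Matching with the asymptotics from the first paragraph, which give $v(t)\sim(1+\beta_{0})t$ as $t\to-\infty$ and $v(t)\sim-(1+\beta_{1})t$ as $t\to+\infty$, forces $\alpha=1+\beta_{0}=1+\beta_{1}$, hence $\beta_{0}=\beta_{1}$; the metric is then one of the solutions described in Theorem \ref{thm:ODE}. The main obstacle is the moving-plane step: because the biharmonic operator admits no direct comparison principle, all comparisons must go through the integral representation above, and verifying both that the sliding can be initiated at $\lambda$ large and propagated all the way to $\bar{\lambda}=0$ requires careful use of the refined asymptotics together with measure estimates on the sets $\{w_{\lambda}>0\}\cap\Sigma_{\lambda}$.
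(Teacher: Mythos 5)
Your reduction to $\mathbb{R}^{4}$, the concluding match with Theorem \ref{thm:ODE} (which forces $\alpha=1+\beta_{0}=1+\beta_{1}$ and identifies the metric with one of the ODE solutions), and the overall moving-plane strategy are sound, but your route to radial symmetry is genuinely different from the paper's. The paper runs a \emph{differential} moving-plane argument: it splits the fourth-order problem into two second-order ones, first getting $-\Delta u>0$ from the representation of $\Delta u$ (Lemma \ref{lem:Lemma 2.2}), then applying the maximum principle successively to $\Delta w_{\lambda}$ and to $w_{\lambda}$, sweeping the plane from $x_{1}=-\infty$ up to $0$ so that the true singularity (where $w_{\lambda}\to+\infty$) stays inside $\Sigma_{\lambda}$, with the stopping analysis handled by the strong maximum principle and Hopf's lemma at $T_{\lambda_{0}}$, and with behavior at infinity controlled by the expansion of $-\Delta u$ (Lemma \ref{lem:Lemma 2.8 Asymptotic expansion at infinity}) through the Caffarelli--Gidas--Spruck Lemmas \ref{lem:Monontonicity lemma} and \ref{lem: Abnormal points are Bounded sequence}. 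You instead work with the \emph{integral} equation $u=-\frac{1}{8\pi^{2}}\log|\cdot|\ast e^{4u}+\beta_{0}\log|x|+c$ and sweep from $x\cdot e=+\infty$ down to $0$, exploiting the sign of the antisymmetrized kernel on $\Sigma_{\lambda}\times\Sigma_{\lambda}$, the favorable sign of $\beta_{0}\log(|x|/|x^{\lambda}|)$ for $\lambda\ge0$, and the ghost singularity of $u_{\lambda}$ at $2\lambda e$ to exclude stopping before $0$; this buys independence from the positivity of $-\Delta u$, from Hopf's lemma, and from the CGS lemmas. The trade-off is that two points you only sketch carry the real weight: (i) the integral representation is itself a nontrivial statement (it is essentially Lemma \ref{lem:Lemma 2.5}, quoted from Lin, not a mere consequence of integrability of $e^{4u}$), and (ii) the initiation at large $\lambda$ does not follow directly ``from the asymptotics'', because near the plane the leading terms of $u(x)$ and $u(x^{\lambda})$ nearly cancel, and the paper's Lemma \ref{lem:Monontonicity lemma} applies to the positive function $-\Delta u$ with its harmonic-type expansion, so it cannot be cited for $u$ itself; you must instead carry out Chen--Li--Ou-type estimates for the sign-controlled logarithmic kernel (using the smallness of $\int_{\Sigma_{\lambda}}e^{4u}$ for large $\lambda$ and the mean value theorem on $e^{4u}-e^{4u_{\lambda}}$) both to start the plane and to continue it past any $\bar{\lambda}>0$. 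With those estimates supplied, your argument closes and yields the same conclusion as the paper.
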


In the smooth case, the radial symmetry of solutions of (\ref{eq: PDE in R^4})
has been established by Lin \cite{lin1998classification} using the
moving plane method. Lin investigates the asymptotic behavior of the
solution $u$ and proves that $\Delta u$ has an asymptotic harmonic
expansion at infinity. This expansion implies certain monotonicity
of $u$ and allows one to initiate moving plane method near infinity.
We should mention that Caffarelli, Gidas, and Spruck \cite{caffarelli1989asymptotic}
were first to investigate such expansions in order to apply moving
plane method to study the semi-linear elliptic equation $-\Delta u=u^{\frac{n+2}{n-2}}$.
Since we allow singularities at the origin and at the infinity, we
do not expect to have an exact expansion like those in Lin \cite{lin1998classification}.
However, by careful analysis, we establish a similar asymptotic expansion
near each singularity. It generalizes our regularity theorem in Theorem
\ref{thm:Subcritical}. See details in Section 7.

We intend to investigate critical cases and supercritical cases further.
We expect some nonexistence results for constant $Q$-curvature metric
in supercritical cases and critical cases with more than 3 singular
points. General supercritical cases are more elusive since we have
the existence result \cite{djadli2008existence}. 

In a recent work \cite{fang2019sigma}, the first named author and
Wei derive a similar criterion for the existence of constant $\sigma_{2}$
curvature metrics on conic 4-manifolds. In particular, they establish
the nonexistence result for supercritical cases and uniqueness result
for critical cases. Combined with \cite{fang2019sigma}, our results
imply rich conformal geometry of conic manifolds and indicate an interesting
direction. In the future, we would like to explore corresponding topics
for more general types of singular manifolds.

We organize the paper as follows. In Section 2, we discuss some function
spaces and embedding theorems with conical singularities. In Section
3, we derive a Gauss-Bonnet-Chern formula resembling the one for Riemann
surfaces. In Section 4, a Moser-Trudinger-Adams type inequality for
conic manifolds is established. In Section 5, we give the proof of
Theorem \ref{thm:Subcritical}. In Section 6, we study the radial
symmetric solutions and prove Theorem \ref{Them:ODE result}. In Section
7, a detailed asymptotic expansion of solutions near a conical singularity
is given. In Section 8, we use the asymptotic expansion from Section
7 to prove Theorem \ref{thm:Uniquness result}.

We would like to thank referees for pointing out some previous works
\cite{buzano2015chern} and \cite{Hyder2019}, which were unknown
to us. We would like to thank Alice Chang and Paul Yang for their
interest in this work. The second named author would like to thank
Mijia Lai for help and comments. Part of this work was done during
the second named author visiting Shanghai Jiaotong University. He
would like to thank the hospitality. 

\section{Function spaces}

In this section, we discuss several function spaces in our study and
list their relations.

Let $(M^{4},g_{0},D,g_{D})$ be a $4$ dimensional conic manifold
according to the definitions similar to (\ref{eq:singular term})
and (\ref{eq:g_D definition}) where $D=\sum_{i=1}^{k}\beta_{i}p_{i}$.
Likewise, we assume that 
\[
g_{D}=e^{2\gamma(x)}g_{0}.
\]
$\gamma$ is given by:
\begin{equation}
\gamma(x)=\sum\beta_{i}\log(r_{i})\eta_{i}(x)+f_{i}(x),\label{eq:Singular weight gamma; functional spaces}
\end{equation}
where $f_{i}(x)$ is smooth and $\eta_{i}(x)$ is a smooth cutoff
function near $p_{i}$. Let $g_{1}=g_{D}$ and $dV_{i}$ be the volume
elements of $g_{i}$ for $i=0,1$ respectively. We can define $H^{2}(dV_{i})$
norm of a function $u$ in $C^{\infty}(M)$ by
\[
\|u\|_{H^{2}(dV_{i})}^{2}=\int_{M}|u|^{2}dV_{i}+\int_{M}|\nabla_{g_{i}}u|^{2}dV_{i}+\int_{M}|\nabla_{g_{i}}^{2}u|^{2}dV_{i},
\]
for $i=0,1$. Let $H^{2}(dV_{i})$ be the closure of $C^{\infty}(M)$
under the $H^{2}(dV_{i})$ norm. Although $g_{1}$ is not smooth,
the related $L^{p}$ spaces have certain comparison theorem and $H^{2}(dV_{i})$
are actually the same. This is in fact a crucial point in Troyanov's
work\cite{troyanov1991prescribing}. Besides, the $\poincare$ inequality
and compact embedding theorem for Sobolev space are still valid. The
following results are similar to those given in \cite{troyanov1991prescribing}
which concern surfaces.
\begin{prop}
(Weighted Sobolev inequality) \label{thm:(Weighted-Sobolev-inequality)}Let
$\Omega$ be an open domain in $\mathbb{R}^{4}$ contains $0$. Let
$\beta>-1$. Then there is some constant $C(\Omega)$, independent
of $p$ such that for any $u\in C_{c}^{2}(\Omega)$, 
\[
\left(\int_{\Omega}|u|^{p}\cdot|x|^{4\beta}dx\right)^{\frac{1}{p}}\leq C(\Omega)p^{\frac{1}{2}}\cdot||\Delta u||_{L^{2}(\Omega)}
\]
\end{prop}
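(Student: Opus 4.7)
\medskip
\noindent\textbf{Proof proposal.} The plan is to obtain the $\sqrt{p}$-dependence by deriving the weighted $L^{p}$ bound as a consequence of a weighted exponential-integrability inequality (the Lam--Lu singular Adams inequality, cited explicitly in the introduction) through a term-by-term Taylor expansion. This is the $4$-dimensional analogue of the argument Troyanov uses on surfaces via Trudinger's inequality.

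First, by homogeneity in $u$ it suffices to prove the estimate under the normalization $\|\Delta u\|_{L^{2}(\Omega)}=1$; any $u\in C^{2}_{c}(\Omega)$ may then be extended by zero to $\mathbb{R}^{4}$ so that $u\in W^{2,2}_{0}(\Omega)$ and $\beta>-1$ makes $|x|^{4\beta}$ locally integrable. Next, I would invoke the Lam--Lu singular Adams inequality on $\Omega$: there exists a constant $c_{0}=c_{0}(\beta)>0$ (one may take $c_{0}=32\pi^{2}(1+\beta)$) such that
\[
\int_{\Omega}\exp\!\bigl(c_{0}\,u^{2}\bigr)\,|x|^{4\beta}\,dx\;\le\;C(\Omega,\beta).
\]
Writing the exponential as its Taylor series and comparing term-by-term yields, for every integer $k\ge 1$,
\[
\int_{\Omega}|u|^{2k}\,|x|^{4\beta}\,dx\;\le\;\frac{C(\Omega,\beta)\,k!}{c_{0}^{\,k}}.
\]
Taking the $(2k)$-th root and using Stirling's formula $(k!)^{1/(2k)}\le C'\sqrt{k}$, this gives
\[
\left(\int_{\Omega}|u|^{2k}\,|x|^{4\beta}\,dx\right)^{\!1/(2k)}
\;\le\; \frac{C''}{\sqrt{c_{0}}}\,\sqrt{k}\;\le\;C(\Omega,\beta)\,\sqrt{p}
\]
for the even exponent $p=2k$.

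Finally, to promote the bound from even integers to all $p\ge 1$, I would pick the smallest even integer $2k\ge p$ and interpolate using Hölder's inequality on the finite measure space $(\Omega,|x|^{4\beta}\,dx)$, whose total mass is finite precisely because $\beta>-1$; this step loses only a factor depending on $\Omega$ and $\beta$ and preserves the $\sqrt{p}$ rate. Reinstating $\|\Delta u\|_{L^{2}}$ by scaling then yields
\[
\Bigl(\int_{\Omega}|u|^{p}|x|^{4\beta}\,dx\Bigr)^{\!1/p}\;\le\;C(\Omega)\,p^{1/2}\,\|\Delta u\|_{L^{2}(\Omega)}.
\]

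\medskip
\noindent The main obstacle is really the first step: the exponential-integrability input. The unweighted ($\beta=0$) case is Adams' 1988 theorem, and the weighted version with the sharp constant $32\pi^{2}(1+\beta)$ is precisely the content of Lam--Lu, whose derivation is delicate (requiring rearrangement and symmetrization with the singular weight). Once that result is taken as a black box, the remainder of the argument--Taylor expansion, Stirling, and Hölder interpolation--is essentially bookkeeping. I expect no difficulty with uniformity in $p$, because the $p$-dependence is tracked explicitly through the factorial on the right-hand side, and the constant $c_{0}=c_{0}(\beta)$ is absorbed into $C(\Omega)$.
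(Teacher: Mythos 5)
Your argument is correct in substance, but it is a genuinely different route from the paper's. The paper does not reprove this proposition at all: it simply cites the Appendix of Troyanov's paper, where the weighted Sobolev inequality is obtained directly (a potential-theoretic/rearrangement estimate with the $\sqrt{p}$ growth tracked explicitly), independently of any exponential-class inequality. You instead deduce it from the singular Moser--Trudinger--Adams inequality of Lam--Lu (which is the paper's own Theorem 4.3, proved in Section 4 via Talenti's comparison principle and Tarsi's lemma) by Taylor expansion, Stirling, and H\"older interpolation on the finite measure $|x|^{4\beta}dx$; that chain of estimates is sound and does give the $\sqrt{p}$ rate uniformly in $p$. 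Two remarks on what each approach buys and costs. First, your route inverts the paper's logical order: Proposition 2.1 is used in Section 2 to set up the function spaces, while the Adams-type inequality appears only in Section 4; there is no actual circularity, since the proof of Theorem 4.3 (and Lam--Lu's original proof) nowhere uses Proposition 2.1, but you are invoking a much deeper, sharp-constant theorem to prove an elementary embedding, whereas the cited Troyanov-style proof is self-contained and elementary. Second, two small caveats in your write-up: the statement allows all $\beta>-1$, while Lam--Lu (and the paper's Theorem 4.3) is stated for $-1<\beta<0$, and your claim that one may take $c_{0}=32\pi^{2}(1+\beta)$ is false for $\beta>0$ (the sharp constant cannot exceed $32\pi^{2}$ once the weight is bounded away from the origin); for $\beta\geq0$ you should simply bound $|x|^{4\beta}$ above on the (implicitly bounded) domain and use the classical Adams inequality, which is all your argument needs since sharpness of $c_{0}$ is irrelevant here. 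Also note that boundedness of $\Omega$ must be assumed -- both for the finite total mass you use in the interpolation step and because the inequality fails on unbounded domains by scaling -- which is consistent with the paper's intended setting.
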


\begin{proof}
See the Appendix in \cite{troyanov1991prescribing}. 
\end{proof}
By partition of unity and Proposition \ref{thm:(Weighted-Sobolev-inequality)},
we have Sobolev's embedding from $H^{2}(dV_{i})$ spaces to $L^{p}(dV_{i})$
spaces as a natural extension.

\begin{prop}
(\label{prop:(Sobolev's-embedding)-There}Sobolev's embedding) There
is a constant $C$ such that for all $u\in H^{2}(dV_{i})$ and $p\in[1,\infty)$,
we have $\|u\|_{L^{p}(dV_{i})}\leq C\sqrt{p}\|u\|_{H^{2}(dV_{i})}$.
\end{prop}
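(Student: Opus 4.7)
The plan is to reduce the inequality to the flat weighted estimate of Proposition \ref{thm:(Weighted-Sobolev-inequality)} via a partition of unity that separates the conic points from the smooth locus. By density of $C^{\infty}(M)$ in $H^{2}(dV_i)$ it suffices to prove the bound for smooth $u$, and H\"older interpolation from $L^{2}$ handles any bounded range of $p$, so only the regime $p\to\infty$ matters. The key observation is that near each $p_i$, in $g_0$-normal coordinates one has $dV_0\sim dx$ and $\gamma=\beta_i\log|x|+f_i$ with $f_i$ smooth, so $dV_1$ is pointwise comparable to $|x|^{4\beta_i}\,dx$, which is exactly the weight appearing in Proposition \ref{thm:(Weighted-Sobolev-inequality)}.

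Fix small disjoint $g_0$-geodesic balls $B_i$ around each $p_i$ on which the local form \eqref{eq:Singular weight gamma; functional spaces} is valid with $\eta_i\equiv 1$, together with an open set $U_0\Subset M\setminus\{p_1,\ldots,p_k\}$ such that $M=U_0\cup\bigcup_i B_i$, and pick a subordinate partition of unity $\{\chi_0,\chi_1,\ldots,\chi_k\}$. Decompose $u=\chi_0 u+\sum_i \chi_i u$. On $U_0$ the weight $e^{4\gamma}$ is smooth and uniformly bounded above and below, so $dV_0\sim dV_1$ there, and Proposition \ref{thm:(Weighted-Sobolev-inequality)} with $\beta=0$ applied in coordinate charts covering $U_0$ gives $\|\chi_0 u\|_{L^p(dV_i)}\le C\sqrt{p}\,\|u\|_{H^2(dV_0)}$. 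On each $B_i$, since $\beta_i>-1$ and $\chi_i u$ has compact support in the chart, Proposition \ref{thm:(Weighted-Sobolev-inequality)} yields
\[
\|\chi_i u\|_{L^p(dV_1)}\le C\sqrt{p}\,\|\Delta(\chi_i u)\|_{L^2(dx)},
\]
and product rule expansion together with smoothness of $\chi_i$ and the equivalence of $\Delta_{g_0}$ with the flat Laplacian in the chart dominates the right-hand side by $\|u\|_{H^2(B_i,dV_0)}$. The same scheme with $\beta=0$ yields the corresponding bound with $dV_0$ in place of $dV_1$ on the left-hand side.

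Summing over the partition of unity and invoking the equivalence $H^{2}(dV_0)\simeq H^{2}(dV_1)$ asserted in the paper (established in the manner of Troyanov \cite{troyanov1991prescribing} and resting on $\beta_i>-1$), one collects the claimed inequality with a single constant. The main technical point I would verify carefully is the bound on $\|\Delta(\chi_i u)\|_{L^2(dx)}$ in terms of the intrinsic $H^{2}$-norm: the localization introduces only first- and zeroth-order terms on the smooth part of the chart, which are controlled by the full $H^2$ norm via standard Poincar\'e estimates, and Lebesgue measure there is comparable to $dV_0$. Once these local bounds are in hand, the uniformity of the constant $C$ in $p$ is automatic because the $\sqrt{p}$ factor is already present in Proposition \ref{thm:(Weighted-Sobolev-inequality)} and does not interact with the cut-off step.
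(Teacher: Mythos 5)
Your localization scheme is exactly the argument the paper has in mind (the paper's entire ``proof'' is the sentence preceding the statement: partition of unity plus Proposition \ref{thm:(Weighted-Sobolev-inequality)}), and the details you supply are sound: near each $p_{i}$ the measure $dV_{1}$ is comparable to $|x|^{4\beta_{i}}dx$ in a $g_{0}$-normal chart, $\Delta(\chi_{i}u)$ differs from $\chi_{i}\Delta_{g_{0}}u$ by first- and zeroth-order terms, and the $\sqrt{p}$ growth comes solely from Proposition \ref{thm:(Weighted-Sobolev-inequality)}. What this chain actually produces, for both choices of measure on the left, is the bound $\|u\|_{L^{p}(dV_{i})}\leq C\sqrt{p}\,\|u\|_{H^{2}(dV_{0})}$, i.e.\ with the \emph{smooth} $H^{2}$ norm on the right.

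The genuine gap is your last step, where you convert $\|u\|_{H^{2}(dV_{0})}$ into $\|u\|_{H^{2}(dV_{1})}$ by ``invoking the equivalence $H^{2}(dV_{0})\simeq H^{2}(dV_{1})$.'' In the paper that equivalence is Proposition \ref{prop:.invariance of H2}, which comes \emph{after} the present statement and whose proof uses precisely the $i=1$ case of Proposition \ref{prop:(Sobolev's-embedding)-There} (via Proposition \ref{thm:(-comparison).-If}); quoting it here is circular. The repair does not need the full equivalence: since $\beta_{i}\in(-1,0)$, the conformal factor satisfies $e^{\gamma}\geq c>0$ on $M$, so $dV_{0}\leq C\,dV_{1}$ and $|\nabla_{g_{0}}u|^{2}dV_{0}=e^{-2\gamma}|\nabla_{g_{1}}u|_{g_{1}}^{2}dV_{1}\leq C|\nabla_{g_{1}}u|_{g_{1}}^{2}dV_{1}$, which disposes of the zeroth- and first-order pieces directly; only the second-order piece $\|\Delta_{g_{0}}u\|_{L^{2}(dV_{0})}\leq C\|u\|_{H^{2}(dV_{1})}$ requires an argument, namely the conformal transformation law $e^{2\gamma}\Delta_{g_{1}}u=\Delta_{g_{0}}u+2\nabla_{g_{0}}u\cdot\nabla_{g_{0}}\gamma$ combined with a Hardy-type estimate for $\nabla_{g_{0}}u\cdot\nabla_{g_{0}}\gamma$ (the part of the proof of Proposition \ref{prop:.invariance of H2} that does not go through the present proposition). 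Alternatively, and more in keeping with how the estimate is actually used later, you can simply state and prove the inequality with $\|u\|_{L^{2}(dV_{0})}+\|\Delta_{g_{0}}u\|_{L^{2}(dV_{0})}$ on the right-hand side; that version is what your first two paragraphs establish and is what Proposition \ref{prop:.invariance of H2} and the coercivity estimate in Section 5 require.
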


\begin{prop}
\label{thm:(-comparison).-If}($L^{p}$ comparison). Let $\alpha=\min\{\beta_{i}+1\}$
and $\omega=\max\{\beta_{i}+1\}$. If $p>\frac{q}{\alpha}$, then
$L^{p}(dV_{0})\subset L^{q}(dV_{1}).$ If $p>q\omega$, then $L^{p}(dV_{1})\subset L^{q}(dV_{0})$.
\end{prop}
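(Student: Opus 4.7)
The plan is to reduce both inclusions to Hölder's inequality applied to the conformal factor. Recall from (\ref{eq:Singular weight gamma; functional spaces}) that $dV_1 = e^{4\gamma} dV_0$, where $\gamma$ is smooth away from the $p_i$ and satisfies $\gamma(x) = \beta_i \log r_i + O(1)$ near each $p_i$. So $e^{4\gamma}$ behaves like $r_i^{4\beta_i}$ locally and is smooth and bounded (above and below) on a compact subset away from the singular set. The integrability of $r^a$ on a ball in $\mathbb{R}^4$ requires $a > -4$, and this is the single criterion we will invoke.

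For the first inclusion, given $u \in L^p(dV_0)$ with $p > q/\alpha$, I would split
\[
\int_M |u|^q \, dV_1 \;=\; \int_M |u|^q e^{4\gamma} \, dV_0
\]
and apply Hölder with conjugate exponents $p/q$ and $p/(p-q)$ to bound this by
\[
\Bigl(\int_M |u|^p \, dV_0\Bigr)^{q/p} \Bigl(\int_M e^{4\gamma p/(p-q)} \, dV_0\Bigr)^{(p-q)/p}.
\]
It remains to verify that the second factor is finite. Near $p_i$ the integrand is comparable to $r_i^{4\beta_i p/(p-q)}$, which is integrable on a ball in $\mathbb{R}^4$ precisely when $4\beta_i p/(p-q) > -4$, i.e.\ when $p(\beta_i+1) > q$. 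Since this must hold for every $i$, the sharp condition is $p > q/\min_i(\beta_i+1) = q/\alpha$, which is exactly our hypothesis. Away from the singular points $e^{4\gamma}$ is smooth and bounded, so global finiteness follows by compactness of $M$.

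For the second inclusion, given $u \in L^p(dV_1)$ with $p > q\omega$, I would write
\[
\int_M |u|^q \, dV_0 \;=\; \int_M |u|^q e^{-4\gamma} \, dV_1,
\]
and apply Hölder with the same conjugate pair $p/q$ and $p/(p-q)$ to get the bound
\[
\Bigl(\int_M |u|^p \, dV_1\Bigr)^{q/p} \Bigl(\int_M e^{-4\gamma p/(p-q)} \, dV_1\Bigr)^{(p-q)/p}.
\]
Using $dV_1 = e^{4\gamma} dV_0$, the second integrand simplifies to $e^{-4\gamma q/(p-q)} dV_0$, which near $p_i$ behaves like $r_i^{-4\beta_i q/(p-q)}$. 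Integrability now requires $-4\beta_i q/(p-q) > -4$, i.e.\ $q(\beta_i+1) < p$. Taking the maximum over $i$ gives the condition $p > q\omega$, matching the hypothesis. The global finiteness again follows since $\gamma$ is smooth and bounded off the singular set.

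There is no real obstacle: the whole argument is two applications of Hölder with the exponents arranged so that the conformal factor appears to a power that is integrable exactly under the stated hypotheses. The only thing to keep track of is the bookkeeping of the exponents and matching them to $\alpha = \min(\beta_i+1)$ and $\omega = \max(\beta_i+1)$; the calculation is symmetric in spirit but with the roles of $\alpha$ and $\omega$ swapped because one direction needs the worst singularity to remain integrable while the other needs the worst blow-up of $e^{-4\gamma}$ to remain integrable.
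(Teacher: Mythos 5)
Your first inclusion is fine, and the route (H\"older with the conjugate pair $p/q$, $p/(p-q)$ applied to the conformal volume factor $e^{4\gamma}\sim r_i^{4\beta_i}$) is the standard one; the paper itself gives no argument and simply cites Troyanov, whose proof is this computation. One small thing worth saying explicitly there: the H\"older pair only makes sense if $p>q$, and this does follow from your hypothesis because $\alpha=\min\{\beta_i+1\}\le 1$ forces $q/\alpha\ge q$.

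In the second inclusion that same tacit assumption is a genuine gap. The hypothesis is $p>q\omega$ with $\omega=\max\{\beta_i+1\}<1$ (the paper takes $-1<\beta_i<0$), so $p>q\omega$ does \emph{not} imply $p>q$; yet your H\"older step with exponents $p/q$ and $p/(p-q)$, and the manipulations in which you divide by $p-q$ without changing the direction of the inequalities, are only valid when $p>q$. This is not a removable technicality: for $q\omega<p<q$ the asserted inclusion $L^{p}(dV_{1})\subset L^{q}(dV_{0})$ is actually false, because near a non-singular point the two volume elements are comparable and on a Euclidean ball $L^{p}\not\subset L^{q}$ when $p<q$ (take $u(x)=|x-x_{0}|^{-s}$ centered at a regular point $x_{0}$ with $4/q\le s<4/p$). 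Moreover, once you do assume $p>q$, note that with all $\beta_i<0$ the factor $e^{-4\gamma q/(p-q)}\sim r_i^{-4\beta_i q/(p-q)}$ is \emph{bounded} near each $p_i$, so the weighted integrability condition you derive is automatic and the operative hypothesis is $p>q$ itself; the condition $p>q\omega$ is only the sharp one in the setting the statement is borrowed from (Troyanov allows $\beta_i>0$, hence possibly $\omega\ge1$, in which case $p>q\omega$ already gives $p>q$). So your write-up of the second part should either add $p\ge q$ (equivalently, replace the hypothesis by $p>q\max\{\omega,1\}$) or remark that in the paper's only use of this direction (the proof that $H^{2}(dV_{0})=H^{2}(dV_{1})$, where $q=2$ and $p>2\omega$ may be chosen as large as one likes) one always has $p>q$, so nothing downstream is affected.
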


\begin{proof}
See \cite{troyanov1991prescribing}. 
\end{proof}
\begin{prop}
\label{prop:.invariance of H2}$H^{2}(dV_{0})=H^{2}(dV_{1})$. 
\end{prop}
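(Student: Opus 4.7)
The plan is to reduce the set-theoretic identity $H^2(dV_0) = H^2(dV_1)$ to the equivalence of the two norms on the common dense subspace $C^\infty(M)$. Since $M$ is compact and $g_0$ is smooth, every $u \in C^\infty(M)$ automatically lies in $H^2(dV_0)$, so the task is to establish a two-sided estimate $c^{-1}\|u\|_{H^2(dV_0)} \leq \|u\|_{H^2(dV_1)} \leq c\|u\|_{H^2(dV_0)}$ with a constant $c$ independent of $u$. Completing $C^\infty(M)$ in either norm then yields the same Banach space.

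I would handle the three summands of the $H^2$-norm separately. For the $L^2$ part, $\|u\|_{L^2(dV_1)}^2 = \int_M u^2 e^{4\gamma} dV_0$, and near each $p_i$ the weight satisfies $e^{4\gamma} \sim r_i^{4\beta_i}$, which is locally integrable because $\beta_i > -1$. Combining H\"older's inequality with the $L^p$-comparison of Proposition \ref{thm:(-comparison).-If} and the Sobolev embedding of Proposition \ref{prop:(Sobolev's-embedding)-There} (which places every $u \in H^2(dV_0)$ in $L^p(dV_0)$ for all finite $p$) bounds $\|u\|_{L^2(dV_1)}$ by $\|u\|_{H^2(dV_0)}$. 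The opposite direction is easier since $e^{-4\gamma}$ stays bounded near each $p_i$.

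For the gradient piece the key identity is $|\nabla_{g_1} u|^2_{g_1}\, dV_1 = e^{2\gamma}|\nabla_{g_0} u|^2_{g_0}\, dV_0$. The weight $e^{2\gamma}$ lies in $L^s(dV_0)$ for every $s < 2/|\beta_1|$, and because $|\beta_1| < 1$ there is room to pick $s > 2$. H\"older then reduces the estimate to a bound on $|\nabla_{g_0} u|_{g_0}$ in $L^{2s'}(dV_0)$ for some $s' < 2$, supplied by the standard Sobolev embedding $W^{1,2}(M, g_0) \hookrightarrow L^4$ in dimension four. The reverse inequality is analogous with $\gamma$ replaced by $-\gamma$.

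The main obstacle is the Hessian piece, since the conformal change of covariant derivatives produces extra terms built from $\nabla \gamma$, which diverges like $r_i^{-1}$ at each conical point. Writing $\nabla^2_{g_1} u - \nabla^2_{g_0} u$ explicitly in terms of $\nabla \gamma$ and $\nabla u$ generates cross-terms of the form $\int |\nabla\gamma|^2|\nabla u|^2\, dV_0$ and $\int|\nabla\gamma||\nabla u||\nabla^2 u|\, dV_0$. Since $|\nabla\gamma|^2 \sim r_i^{-2}$ belongs to $L^s(dV_0)$ for every $s<2$, these cross-terms can be controlled by H\"older together with the Sobolev embedding $|\nabla u|\in L^4(dV_0)$ and the $L^2$ bound on $|\nabla^2 u|$. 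Careful bookkeeping of these singular weights, which crucially uses $\beta_i > -1$, closes the estimate and, applied in both directions, delivers the stated equivalence of norms.
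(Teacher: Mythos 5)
Your reduction to a two-sided norm comparison on $C^{\infty}(M)$, and your treatment of the $L^{2}$ and gradient pieces, are fine: there the singular weights $e^{4\gamma}\sim r^{4\beta_i}$ and $e^{2\gamma}\sim r^{2\beta_i}$ are subcritical (because $\beta_i>-1$), so H\"older plus the embeddings of Propositions \ref{prop:(Sobolev's-embedding)-There} and \ref{thm:(-comparison).-If} close those estimates. The gap is exactly where you locate the main obstacle, in the Hessian cross-terms, and your proposed tool does not suffice. After the conformal weights cancel, the term you must control is
\[
\int_{M}|\nabla_{g_{0}}\gamma|^{2}\,|\nabla_{g_{0}}u|^{2}\,dV_{0}\ \sim\ \int_{B}\frac{|\nabla u|^{2}}{r^{2}}\,dx ,
\]
and this is a \emph{borderline} quantity in dimension $4$: H\"older would require $|\nabla u|^{2}\in L^{q}$ with $r^{-2}\in L^{q'}$, i.e.\ $q'<2$ and hence $q>2$, i.e.\ $|\nabla u|\in L^{2q}$ with $2q>4$; but the Sobolev embedding you invoke only gives $|\nabla u|\in L^{4}$ (sharp, no $\epsilon$ of room), while $r^{-2}\notin L^{2}$ near the cone point (the integral $\int_{B}r^{-4}dx$ diverges logarithmically). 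The same problem reappears in your second cross-term $\int|\nabla\gamma|\,|\nabla u|\,|\nabla^{2}u|$, since after putting $|\nabla^{2}u|$ in $L^{2}$ you are back to $\int r^{-2}|\nabla u|^{2}$. So "careful bookkeeping'' with H\"older and $W^{1,2}\hookrightarrow L^{4}$ cannot close the estimate; you need a genuinely borderline inequality, namely Hardy's inequality $\int_{B}r^{-2}|\nabla u|^{2}\leq C\int|\nabla^{2}u|^{2}+\text{(lower order)}$ (or an equivalent Lorentz-space refinement), which is precisely what the paper uses.

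For comparison, the paper also takes a shortcut you might adopt: by Poincar\'e and elliptic estimates it replaces the full $H^{2}$ norms by $\|u\|_{L^{2}(dV_{i})}+\|\Delta_{g_{i}}u\|_{L^{2}(dV_{i})}$, and then exploits the exact four-dimensional cancellation $|\Delta_{g_{1}}u|^{2}dV_{1}=|\Delta_{g_{0}}u+2\nabla_{g_{0}}\gamma\cdot\nabla_{g_{0}}u|^{2}dV_{0}$, so that only the single cross-term $\nabla\gamma\cdot\nabla u$ remains, which is then bounded by Hardy's inequality. Your Hessian-based route can be made to work the same way, but only once Hardy (not H\"older) handles the $r^{-2}|\nabla u|^{2}$ term; as written, the argument fails at exactly that exponent.
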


\begin{proof}
By Poincaré's inequality, $H^{2}(dV_{i})$ norm is given by $\|u\|_{L^{2}(dV_{i})}+\|\Delta u\|_{L^{2}(dV_{i})}$.
We claim that $\|\Delta_{g_{0}}u\|_{L^{2}(dV_{0})}$ and $\|\Delta_{g_{1}}u\|_{L^{2}(dV_{1})}$
are equivalent. In fact, we have that 
\[
e^{2\gamma(x)}\Delta_{g_{1}}u(x)=\Delta_{g_{0}}u(x)+2\nabla_{g_{0}}u(x)\cdot\nabla_{g_{0}}\gamma(x).
\]
Observe that, $|\nabla_{g_{0}}\gamma(x)|\sim\beta_{i}|x-p_{i}|^{-1}$
at a neighborhood of $p_{i}$ and smooth elsewhere. By the well known
Hardy's inequality, 
\[
\|\nabla_{g_{0}}u(x)\cdot\nabla_{g_{0}}\gamma(x)\|_{L^{2}(dV_{0})}\leq C\|\nabla_{g_{0}}^{2}u\|_{L^{2}(dV_{0})}.
\]
Hence, $\|\Delta_{g_{1}}u\|_{L^{2}(dV_{1})}<C\|\Delta_{g_{0}}u\|_{L^{2}(dV_{1})}$.
The other direction is the same. 

We only have to show
\begin{equation}
\|u\|_{L^{2}(dV_{0})}\leq C\|u\|_{H^{2}(dV_{1})}\ and\ \|u\|_{L^{2}(dV_{1})}\leq C\|u\|_{H^{2}(dV_{0})}.\label{eq: H^2 equivalence}
\end{equation}
Let $\alpha=\min\{\beta_{i}+1\}$ and $\omega=\max\{\beta_{i}+1\}$.
By Proposition \ref{thm:(-comparison).-If}, we can choose $p>2\omega$
to get $\|u\|_{L^{2}(dV_{0})}\leq\|u\|_{L^{p}(dV_{1})}$. Then we
use Proposition \ref{prop:(Sobolev's-embedding)-There} to get $\|u\|_{L^{2}(dV_{0})}\leq C\|u\|_{H^{2}(dV_{1})}$.
The second inequality in (\ref{eq: H^2 equivalence}) can be proved
by choosing $p>\frac{2}{\alpha}$.
\end{proof}
Since we have established Proposition 2.4, from now on, we do not
distinguish $H^{2}(dV_{i})$ . 

The following two propositions are quite standard. See \cite{Troyanov1989}
for details.

\begin{prop}
(compact embedding) \label{prop:(compact-embedding)-The}The embedding
$H^{2}(dV_{i})\hookrightarrow L^{p}(dV_{i})$ for $i=0,1$ is compact
for $1<p<\infty$.
\end{prop}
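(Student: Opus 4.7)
My plan is to reduce the weighted statement for $i=1$ to the classical Rellich--Kondrachov theorem applied to the smooth background metric $g_0$, using the $L^p$ comparison (Proposition \ref{thm:(-comparison).-If}) and the identification $H^2(dV_0)=H^2(dV_1)$ (Proposition \ref{prop:.invariance of H2}) as a bridge.

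First I would dispose of the case $i=0$. Since $(M^4,g_0)$ is a smooth compact Riemannian 4-manifold, the standard Sobolev embedding yields $H^2(dV_0)\hookrightarrow L^q(dV_0)$ for every $q\in[1,\infty)$ (in dimension four, $H^2$ is subcritical for every finite $q$), and Rellich--Kondrachov upgrades this to a compact embedding for every $q\in(1,\infty)$. This handles $i=0$ directly, and in particular gives Proposition \ref{prop:(Sobolev's-embedding)-There} its compactness counterpart for the background metric.

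For $i=1$, fix $p\in(1,\infty)$ and take a sequence $\{u_n\}$ bounded in $H^2(dV_1)$. By Proposition \ref{prop:.invariance of H2}, $\{u_n\}$ is also bounded in $H^2(dV_0)$. Set $\alpha=\min_i\{\beta_i+1\}>0$ and choose an exponent $r$ with $r>p/\alpha$. By the compactness proved in the first step, a subsequence $\{u_{n_k}\}$ converges in $L^r(dV_0)$. Now apply the $L^p$ comparison of Proposition \ref{thm:(-comparison).-If}: since $r>p/\alpha$, there is a continuous inclusion $L^r(dV_0)\hookrightarrow L^p(dV_1)$, so that
\[
\|u_{n_k}-u_{n_\ell}\|_{L^p(dV_1)}\leq C\,\|u_{n_k}-u_{n_\ell}\|_{L^r(dV_0)}\to 0.
\]
Hence $\{u_{n_k}\}$ is Cauchy, and therefore convergent, in $L^p(dV_1)$, which establishes compactness of $H^2(dV_1)\hookrightarrow L^p(dV_1)$.

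There is no real obstacle here; the argument is simply an assembly of pieces already established in the section. The only point requiring a moment of care is the choice of the auxiliary exponent $r$: one must pick $r$ large enough that the $L^p$ comparison kicks in (i.e.\ $r>p/\alpha$), which is possible for every finite $p$ precisely because $\alpha>0$, i.e.\ because each $\beta_i>-1$. This is the same nondegeneracy condition that makes all the preceding weighted embeddings work, so no additional hypothesis is needed.
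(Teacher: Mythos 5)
Your argument is correct and is essentially the standard route the paper leaves to \cite{Troyanov1989}: compactness of $H^{2}\hookrightarrow L^{q}$ for the smooth background metric, transferred to the conic volume via Proposition \ref{prop:.invariance of H2} and the weighted $L^{p}$ comparison, with the choice $r>p/\alpha$ available because each $\beta_{i}>-1$. The only point worth making explicit is that Proposition \ref{thm:(-comparison).-If} is stated as a set inclusion while you invoke the quantitative bound $\|v\|_{L^{p}(dV_{1})}\leq C\|v\|_{L^{r}(dV_{0})}$; this does follow from the H\"older-inequality proof of that comparison (integrability of the weight $e^{4\gamma}$ to the relevant power when $r>p/\alpha$), or abstractly from the closed graph theorem, so no genuine gap remains.
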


\begin{prop}
( \label{prop:(Poincar=0000E9's-inequality)-If} Poincaré's inequality)
If $\int_{M}vdV_{i}=0$, $i=0,1$ , then $\|v\|_{H^{2}}\leq C\|\Delta v\|_{2}$
for some constant $C=C(M)$.
\end{prop}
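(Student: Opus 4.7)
The plan is to bootstrap from the standard $L^2$ Poincar\'e inequality on the smooth compact manifold $(M,g_0)$ to the singular setting, using the equivalence of Sobolev spaces in Proposition \ref{prop:.invariance of H2}. Concretely, I first prove that for every $v\in H^2(dV_0)$ with $\int_M v\,dV_0=0$,
\[
\|v\|_{H^2(dV_0)} \leq C\,\|\Delta_{g_0} v\|_{L^2(dV_0)}.
\]
This is a standard contradiction argument on the smooth compact manifold: taking a sequence $v_n$ with $\|v_n\|_{H^2(dV_0)}=1$, $\int v_n\,dV_0=0$, and $\|\Delta_{g_0}v_n\|_{L^2(dV_0)}\to 0$, the compact embedding of Proposition \ref{prop:(compact-embedding)-The} extracts a subsequence converging strongly in $L^2(dV_0)$, and combining weak $H^2$-convergence with $\Delta_{g_0}v_n\to 0$ in $L^2$ produces a mean-zero weakly harmonic limit $v\equiv 0$. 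Standard $L^2$ elliptic regularity on the smooth $(M,g_0)$ then bounds $\|\nabla^2 v_n\|_{L^2}$ by $\|\Delta v_n\|_{L^2}+\|v_n\|_{L^2}$, and integration by parts gives $\|\nabla v_n\|_{L^2}^2=-\int_M v_n\Delta v_n\,dV_0\leq \|v_n\|_{L^2}\|\Delta v_n\|_{L^2}$, forcing $\|v_n\|_{H^2(dV_0)}\to 0$ and contradicting the normalization.

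To deduce the $i=1$ case, given $v\in H^2$ with $\int_M v\,dV_1=0$, I would decompose $v=u+c$, where $c = (\mathrm{vol}_{g_0}(M))^{-1}\int_M v\,dV_0$ is the $g_0$-mean, so that $\int_M u\,dV_0=0$. Applying the smooth-case inequality to $u$ yields $\|u\|_{H^2(dV_0)}\leq C\|\Delta_{g_0} u\|_{L^2(dV_0)}=C\|\Delta_{g_0} v\|_{L^2(dV_0)}$, since $c$ is constant. The vanishing of the $g_1$-mean of $v$ produces $c\cdot\mathrm{vol}_{g_1}(M) = -\int_M u\,dV_1$, and Cauchy--Schwarz together with Proposition \ref{prop:.invariance of H2} (the $L^2(dV_1)$ norm of $u$ is controlled by $\|u\|_{H^2(dV_1)}\sim\|u\|_{H^2(dV_0)}$) gives $|c|\leq C\|\Delta_{g_0} v\|_{L^2(dV_0)}$. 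Combining the two estimates and invoking the equivalences of the two $H^2$ norms and of the two Laplacian $L^2$ norms established in the proof of Proposition \ref{prop:.invariance of H2} yields the claim for either choice of measure.

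The smooth-case step is routine, so the main obstacle is the consistent comparison between the two measures and the two Laplacians across the singular conformal change. The crucial technical input is the Hardy-type inequality used in the proof of Proposition \ref{prop:.invariance of H2}, which identifies the two $H^2$ spaces; once that identification is available, the conic Poincar\'e inequality is essentially inherited from the smooth background metric, with only the above splitting needed to reconcile the two possible mean-zero normalizations.
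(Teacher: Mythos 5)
Your argument is correct, and it is worth noting that the paper itself gives no proof of this proposition at all: it declares Propositions \ref{prop:(compact-embedding)-The} and \ref{prop:(Poincar=0000E9's-inequality)-If} ``quite standard'' and defers to \cite{Troyanov1989}, which treats the analogous two-dimensional conic situation. So your contribution is a self-contained proof rather than a variant of the paper's. Your route --- first the smooth-background inequality by the usual compactness/contradiction argument (strong $L^{2}(dV_{0})$ convergence from Proposition \ref{prop:(compact-embedding)-The}, a weakly harmonic mean-zero limit which must vanish, then the Bochner-type bound $\|\nabla^{2}v_{n}\|_{2}\leq C(\|\Delta v_{n}\|_{2}+\|v_{n}\|_{2})$ and $\|\nabla v_{n}\|_{2}^{2}\leq\|v_{n}\|_{2}\|\Delta v_{n}\|_{2}$), and then the transfer to the $dV_{1}$-normalization by splitting off the $g_{0}$-mean $c$ and estimating $|c|\,\mathrm{vol}_{g_{1}}(M)=|\int_{M}u\,dV_{1}|\leq C\|u\|_{L^{2}(dV_{1})}\leq C\|u\|_{H^{2}}$ via Proposition \ref{prop:.invariance of H2} --- is exactly the kind of bookkeeping the citation hides, and it buys a proof that uses only what the paper has already established (compact embedding, $H^{2}(dV_{0})=H^{2}(dV_{1})$, equivalence of the two Laplacian $L^{2}$ norms), rather than importing Troyanov's weighted-space machinery. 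Two small points you should make explicit: the bound on $|c|$ uses $\mathrm{vol}_{g_{1}}(M)<\infty$, which holds precisely because each $\beta_{i}>-1$ makes $e^{4\gamma}\sim r^{4\beta_{i}}$ integrable, and the conclusion that the harmonic limit is constant (hence zero) uses connectedness of $M$; both are standing assumptions, but they are where the conic structure and the topology actually enter.
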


\section{Gauss-Bonnet-Chern formula}

In this section, we establish the conformal Gauss-Bonnet-Chern formula
(\ref{eq:add1}), which is originally proved in \cite{buzano2015chern}.
Our discussion follows the approach of Troyanov\cite{troyanov1991prescribing}
and notations in previous sections.

On $(M^{4},g)$, recall that the Gauss-Bonnet-Chern formula 
\[
\int_{M}Q(x)dV_{g}(x)+\frac{1}{4}\int_{M}|W(x)|^{2}dV_{g}(x)=8\pi^{2}\chi(M).
\]
In this section, we will describe the contribution of conical singularities
to this Gauss-Bonnet-Chern formula.

Let $(M^{4},g_{0},D,g_{D})$ be a conic 4-manifold and $g_{1}=g_{D}$.
A good choice of base metric $g_{0}$ will simplify the discussion.
Thus we use the conformal normal coordinates by \cite{lee1987yamabe}.
We can find a metric $g\in[g_{0}]$ such that around each given point
$p_{i}$, the $normal$ coordinates of $g$ satisfy 
\begin{equation}
det(g(x))=1+O(|x|^{N})\label{eq:conformalnormalcoordinates}
\end{equation}
for any $N\in\mathbb{N}$. 
\begin{lem}
Suppose that $B_{\epsilon}(x)$ is a ball with radius $\epsilon$
centered at $x\in M$. Let $\delta$ be the injective radius of $x\in M$
Let $h(x,y)=\log(|x-y|)f(y)$ where $f(y)$ is a smooth function supported
in $B_{\delta}(x)$ and equals $1$ in $B_{\epsilon}(x)$. Then \label{lem:Log}
\[
\lim_{\epsilon\to0}\int_{M\backslash B_{\epsilon}(x)}P_{y}(h(x,y))dV_{g}(y)=8\pi^{2}=4|S^{3}|,
\]
where $P_{y}$ is the Paneitz operator with respect to $y$ and $|S^{3}|$
is the volume of a unit 3-sphere.
\end{lem}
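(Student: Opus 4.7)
The plan is to recast the Paneitz operator in divergence form, apply the divergence theorem on $M\setminus B_\epsilon(x)$, and compute the resulting boundary flux on $\partial B_\epsilon(x)$ in the conformal normal coordinates guaranteed by~(\ref{eq:conformalnormalcoordinates}). Since $\Delta_g^2 u=\mathrm{div}(\nabla\Delta_g u)$ and the lower-order piece of the Paneitz operator is already a divergence, one can write
\[
P_g u = \mathrm{div}(X_u),\qquad X_u := \nabla\Delta_g u + \bigl(\tfrac{2}{3}R\,g - 2\,\mathrm{Ric}\bigr)(\nabla u,\cdot)^\sharp.
\]
Because $f$ is smooth and compactly supported in $B_\delta(x)$ while $M$ is closed, $X_h$ is smooth and compactly supported on $M\setminus\{x\}$. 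Applying the divergence theorem on $M\setminus B_\epsilon(x)$, whose outward unit normal on $\partial B_\epsilon(x)$ is $-\hat r$ (with $\hat r$ the outward radial vector from $x$), gives
\[
\int_{M\setminus B_\epsilon(x)} P_y h(x,y)\, dV_g(y) = -\int_{\partial B_\epsilon(x)} \langle X_h,\hat r\rangle\, dS_g.
\]

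Next I would evaluate this flux to leading order as $\epsilon\to 0$. Since $f\equiv 1$ on $B_\epsilon(x)$, near $\partial B_\epsilon(x)$ we have $h=\log r$, with $r=|y|$ in the conformal normal coordinates at $x$. By~(\ref{eq:conformalnormalcoordinates}) we have $\sqrt{\det g}=1+O(r^N)$, and $g_{ij}(0)=\delta_{ij}$ with vanishing first derivatives forces $g^{ij}=\delta^{ij}+O(r^2)$. A direct calculation then produces
\[
\Delta_g\log r = \tfrac{2}{r^2} + O(1),\qquad \langle\nabla\Delta_g\log r,\hat r\rangle = -\tfrac{4}{r^3} + O(r^{-1}),
\]
while the curvature piece $\bigl(\tfrac{2}{3}Rg - 2\,\mathrm{Ric}\bigr)(\nabla\log r,\cdot)^\sharp$ has pointwise size $O(r^{-1})$ since $d\log r=\hat r/r$ and the curvature tensors are bounded. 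The induced $g$-area of $\partial B_\epsilon(x)$ equals $(1+O(\epsilon^N))\epsilon^3|S^3|$, so collecting terms,
\[
-\int_{\partial B_\epsilon(x)} \langle X_h,\hat r\rangle\, dS_g = 4\epsilon^{-3}(1+O(\epsilon^N))\epsilon^3|S^3| + O(\epsilon^2) = 4|S^3| + o(1),
\]
and letting $\epsilon\to 0$ yields the claimed limit $4|S^3|=8\pi^2$.

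The hard part is the careful bookkeeping of the metric and curvature corrections in the boundary flux: one must verify that every subleading contribution really does vanish in the limit and that only the purely Euclidean flux coming from $\Delta^2\log r$ survives. This is exactly where the conformal normal coordinates~(\ref{eq:conformalnormalcoordinates}) are decisive, since the high-order flatness of the volume element at $x$ forces all correction terms to be $o(1)$, reducing the computation, up to negligible errors, to the classical distributional identity $\Delta^2\log|y|=-8\pi^2\delta_0$ on $\mathbb{R}^4$.
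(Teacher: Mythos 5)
Your proposal is correct and follows essentially the same route as the paper: write $P_g$ in divergence form, apply the divergence theorem on $M\setminus B_{\epsilon}(x)$, and compute the boundary flux of $\nabla\Delta\log r$ in the conformal normal coordinates of (\ref{eq:conformalnormalcoordinates}), where the paper records the key estimate as $\partial_{r}\Delta\log r=-\tfrac{4}{r^{3}}+O(r^{N-3})$ and the curvature term contributes $O(\epsilon^{2})$ to the flux. Your slightly coarser error bounds ($O(r^{-1})$ in place of $O(r^{N-3})$) still vanish against the $O(\epsilon^{3})$ area of $\partial B_{\epsilon}(x)$, so the bookkeeping closes exactly as in the paper's proof.
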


\begin{proof}
Let $(r,\theta^{i})$ be the normal coordinates at $x.$ Let $f\in C^{2}(M-\{x\})$.
If $f=f(r)$ then
\[
\Delta f(r)=f''+\frac{3}{r}f'+f'\partial_{r}\log\sqrt{det(g)},
\]
where $f'$ denotes the derivative with respect to $r$. In particular,
$\Delta\log r=\frac{2}{r^{2}}+\frac{1}{2r}\partial_{r}\log det(g)$.
Suppose that $Pu=\Delta^{2}u+\mathrm{div}(A_{g}du)$ where $A_{g}=\frac{2}{3}Rg-2Ric$.
Divergence theorem then gives
\begin{align}
\int_{M\backslash B_{\epsilon}(x)}P_{y}(h(x,y))dV_{g}(y) & =-\int_{\partial B_{\epsilon}(x)}\left(\frac{\partial}{\partial r}\Delta\log r+O(r^{-2})\right)d\Omega_{g}\nonumber \\
 & =\int_{\partial B_{\epsilon}}\frac{4}{\epsilon^{3}}d\Omega_{g}+o(1),\label{eq:Gauss bonnet Lemma 1}
\end{align}
since $\partial_{r}\Delta\log r=-\frac{4}{r^{3}}+O(r^{N-3})$ by our
assumption (\ref{eq:conformalnormalcoordinates}).  Take $\epsilon\to0$
then the right hand side of (\ref{eq:Gauss bonnet Lemma 1}) approaches
$8\pi^{2}$ since the unit sphere $S^{3}$ has volume $2\pi^{2}$.
\end{proof}
\begin{prop}
(Gauss-Bonnet-Chern) \label{prop:(Gauss-Bonnet-Chern)-Suppose-tha}Suppose
that $g_{1}=e^{2\gamma}g_{0}$ is the metric with $k$ singular points
given by divisor $D=\sum_{i=1}^{k}p_{i}\beta_{i}$ , where $p_{i}\in M$,
$\beta_{i}>-1$. Suppose that $\gamma(x)=\beta_{i}\eta_{i}\log r+f(x)$
in a neighborhood of $p_{i}$, $r=dist(p_{i},x)$ and $f(x)\in H^{2}(M)\cap C^{\infty}(M-\{p_{i}\})$.
Then we have the formula
\[
\int_{M}Q_{g_{1}}dV_{g_{1}}=\int_{M}Q_{g_{0}}dV_{g_{0}}+8\pi^{2}\left(\sum_{i=1}^{k}\beta_{i}\right)
\]
\end{prop}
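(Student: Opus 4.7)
The plan is to apply the conformal transformation law of $Q$-curvature away from the singular set, then extract the contribution of each conical point by an excision argument calibrated by Lemma \ref{lem:Log}. Since $\gamma\in C^{\infty}(M\setminus\{p_{i}\})$ and $g_{1}=e^{2\gamma}g_{0}$ there, the classical identity
$$P_{g_{0}}\gamma+Q_{g_{0}}=e^{4\gamma}Q_{g_{1}}$$
holds pointwise on $M\setminus\{p_{1},\ldots,p_{k}\}$. Fix $\epsilon>0$ smaller than the injectivity radius at every $p_{i}$, set $M_{\epsilon}:=M\setminus\bigcup_{i}B_{\epsilon}(p_{i})$, and integrate against $dV_{g_{0}}$; using $e^{4\gamma}dV_{g_{0}}=dV_{g_{1}}$ this becomes
$$\int_{M_{\epsilon}}P_{g_{0}}\gamma\,dV_{g_{0}}+\int_{M_{\epsilon}}Q_{g_{0}}\,dV_{g_{0}}=\int_{M_{\epsilon}}Q_{g_{1}}\,dV_{g_{1}}.$$
The middle integral converges to $\int_{M}Q_{g_{0}}\,dV_{g_{0}}$ since $Q_{g_{0}}$ is bounded, and once I have a finite limit for the first integral, the right-hand one converges to $\int_{M}Q_{g_{1}}\,dV_{g_{1}}$ (local integrability of $dV_{g_{1}}$ near each $p_{i}$ follows from $\beta_{i}>-1$). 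Everything then reduces to evaluating $\lim_{\epsilon\to 0}\int_{M_{\epsilon}}P_{g_{0}}\gamma\,dV_{g_{0}}$.

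To handle this limit, decompose $\gamma=\gamma_{s}+\tilde{F}$ with
$$\gamma_{s}(x):=\sum_{i=1}^{k}\beta_{i}\,\eta_{i}(x)\,\log r_{i}(x),$$
so that $\tilde{F}$ coincides near each $p_{j}$ with $f_{j}$ plus a function smooth there (the other $\eta_{i}\log r_{i}$ being smooth near $p_{j}$), and thus extends smoothly across every $p_{j}$. Since $P_{g_{0}}$ is formally self-adjoint with $P_{g_{0}}(1)=0$, one has $\int_{M}P_{g_{0}}\tilde{F}\,dV_{g_{0}}=\int_{M}\tilde{F}\cdot P_{g_{0}}(1)\,dV_{g_{0}}=0$; as $P_{g_{0}}\tilde{F}$ is then smooth and bounded, dominated convergence yields $\int_{M_{\epsilon}}P_{g_{0}}\tilde{F}\,dV_{g_{0}}\to 0$. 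For the logarithmic piece, Lemma \ref{lem:Log} applied at each $p_{i}$ gives
$$\lim_{\epsilon\to 0}\int_{M\setminus B_{\epsilon}(p_{i})}P_{g_{0}}\!\left(\eta_{i}\log r_{i}\right)dV_{g_{0}}=8\pi^{2},$$
and since $\eta_{j}\log r_{j}$ is smooth in a neighborhood of $p_{i}$ for $j\neq i$, summing produces $\lim_{\epsilon\to 0}\int_{M_{\epsilon}}P_{g_{0}}\gamma_{s}\,dV_{g_{0}}=8\pi^{2}\sum_{i}\beta_{i}$. Feeding these limits back into the integrated identity yields the claimed Gauss-Bonnet-Chern formula.

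The main technical difficulty is entirely packaged into Lemma \ref{lem:Log}: the divergence-theorem calculation on each $\partial B_{\epsilon}(p_{i})$ must extract exactly $8\pi^{2}$ from the $\log r_{i}$ singularity, with all lower-order geometric corrections of the Paneitz operator washing out in the excision limit. This is precisely where the preparatory conformal normal coordinate choice $\det g=1+O(|x|^{N})$ pays off, eliminating subleading metric contributions to $\partial_{r}\Delta\log r$ and isolating the universal constant $8\pi^{2}=4|S^{3}|$ coming from the volume of the unit $3$-sphere. Beyond invoking that lemma and tracking the decomposition $\gamma=\gamma_{s}+\tilde{F}$, the argument is just a careful accounting of which terms survive as $\epsilon\to 0$.
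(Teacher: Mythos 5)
Your argument for the case of smooth $f$ is essentially the paper's proof: excise the balls $B_{\epsilon}(p_{i})$, use the conformal transformation law $P_{g_{0}}\gamma+Q_{g_{0}}=e^{4\gamma}Q_{g_{1}}$ away from the singular set, and let Lemma \ref{lem:Log} produce $8\pi^{2}\beta_{i}$ at each point; your explicit splitting $\gamma=\gamma_{s}+\tilde{F}$ and the remark that $\int_{M}P_{g_{0}}\tilde{F}\,dV_{g_{0}}=0$ for smooth $\tilde{F}$ is a clean way to dispose of the regular part. The genuine gap is in the generality: the proposition only assumes $f\in H^{2}(M)\cap C^{\infty}(M\setminus\{p_{i}\})$, but your claim that $\tilde{F}$ ``extends smoothly across every $p_{j}$'' tacitly uses the stronger local-smoothness hypothesis of (\ref{eq:singular term}), not the hypothesis of the proposition. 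For an $f$ that is merely $H^{2}$ across $p_{j}$, $P_{g_{0}}\tilde{F}$ is a priori only a distribution (its leading part $\Delta^{2}\tilde{F}$ lives in $H^{-2}$), so the assertion that it is ``smooth and bounded,'' the self-adjointness pairing against $P_{g_{0}}(1)$, and the dominated-convergence step all fail as written; this is precisely the case your proof does not cover.

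The paper closes exactly this gap in the second half of its proof: it takes cutoffs $\phi_{\epsilon}$ with $\phi_{\epsilon}=0$ on $B_{\epsilon}(p_{i})$, $\mathrm{supp}(1-\phi_{\epsilon})\subset\cup_{i}B_{2\epsilon}(p_{i})$ and $|D^{k}\phi_{\epsilon}|\leq C\epsilon^{-k}$, and shows $\int_{M}\phi_{\epsilon}P_{g_{0}}f\,dV_{g_{0}}\to0$ by estimating the top-order term $\int_{M}\Delta f\,\Delta\phi_{\epsilon}\,dV_{g_{0}}$ and the lower-order term $\int_{M}A_{g_{0}}(\nabla\phi_{\epsilon},\nabla f)\,dV_{g_{0}}$ via H\"older's inequality together with $\|f\|_{H^{2}(B_{2\epsilon})}\to0$, cf.\ (\ref{eq: general gauss bonnet higher order}) and (\ref{generela gauss bennet lower oder}). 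You would need this cutoff estimate, or an equivalent approximation argument, to treat the stated hypothesis. A smaller imprecision: to identify $\lim_{\epsilon\to0}\int_{M_{\epsilon}}Q_{g_{1}}\,dV_{g_{1}}$ with $\int_{M}Q_{g_{1}}\,dV_{g_{1}}$ you need integrability of $Q_{g_{1}}$ with respect to $dV_{g_{1}}$ near each $p_{i}$, not merely finiteness of the conic volume; in the smooth case this follows from $Q_{g_{1}}e^{4\gamma}=P_{g_{0}}\gamma+Q_{g_{0}}=O(r^{-2})$, as recorded in Corollary \ref{cor:-for-.lemforregularity Q_rho}.
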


\begin{proof}
First, we assume that $f$ is $C^{\infty}(M)$. Observe that 
\begin{align*}
\int_{M}Q_{g_{1}}dV_{g_{1}} & =\int_{M}e^{-4\gamma}(P_{g_{0}}\gamma+Q_{g_{0}})dV_{g_{1}}\\
 & =\int_{M}\left(P_{g_{0}}\gamma+Q_{g_{0}}\right)dV_{g_{0}}.
\end{align*}
We need to compute $\int_{M}P_{g}\gamma dV_{g}$. Suppose that $\epsilon$
is a positive real number smaller than the injective radius at each
$p_{i}$. Let $B_{i}=B_{\epsilon}(p_{i})$ be the ball with radius
$\epsilon$ at $p_{i}$. We consider the integral $\int_{M-\cup B_{i}}P_{g_{0}}\gamma dV_{g_{0}}$.
By divergence theorem, we see that this integral is just 
\[
\sum_{i=1}^{k}\int_{\partial B_{i}}\left(\frac{\partial}{\partial n}(\Delta_{g_{0}}\gamma)+\langle n,A_{g_{0}}(\nabla\gamma)\rangle\right)d\Omega_{g_{0}},
\]
where $d\Omega_{g_{0}}$ is the area element on the geodesic sphere.
Note $\gamma(x)=f(x)+\beta_{i}\eta_{i}\log(r)$ near $p_{i}$. Thus,
by Lemma \ref{lem:Log}, we see that 
\[
\lim_{\epsilon\to0}\int_{M-\cup B_{\epsilon}(p_{i})}P_{g_{0}}\gamma dV_{g_{0}}=\sum_{i=1}^{k}8\pi^{2}\beta_{i}.
\]
For more general $f\in H^{2}(M)\cap C^{\infty}(M-\{p_{i}\})$, let
$\phi_{\epsilon}$ be a smooth function such that $supp(1-\phi_{\epsilon})\subset\cup_{i=1}^{k}B_{2\epsilon}(p_{i})$
and $\phi_{\epsilon}=0$ on $B_{\epsilon}(p_{i})$. We can assume
that $|D^{k}\phi_{\epsilon}|<C\epsilon^{-k}$ for $k=1,2,3,4$. Now
it suffices to prove
\begin{equation}
\lim_{\epsilon\to0}\int_{M}\phi_{\epsilon}P_{g_{0}}fdV_{g_{0}}=0.\label{eq:Gauss bonnet holds for general situation}
\end{equation}
 Let $B_{\epsilon}=\cup_{i}B_{\epsilon}(p_{i})$. Note that the highest
order term of $P_{g_{0}}$ can be estimated by $\holder's$ inequality
\begin{align}
|\int_{M}\Delta f\Delta\phi_{\epsilon}dV_{g_{0}}| & \leq|\int_{B_{2\epsilon}}C\epsilon^{-2}\Delta fdV_{g_{0}}|\nonumber \\
 & \leq C|\int_{B_{2\epsilon}}\epsilon^{-4}dV_{g_{0}}|^{\frac{1}{2}}\left|\int_{B_{\epsilon}}(\Delta f)^{2}dV_{g_{0}}\right|^{\frac{1}{2}}\nonumber \\
 & \leq C\|f\|_{H^{2}(B_{\epsilon})}\to0\quad as\ \epsilon\to0.\label{eq: general gauss bonnet higher order}
\end{align}
The lower order terms can be estimated by 
\begin{align}
\int_{M}\phi_{\epsilon}\mathrm{div}(A_{g_{0}}df)dV_{g_{0}} & =\int_{M}A_{g_{0}}(\nabla\phi_{\epsilon},\nabla f)dV_{g_{0}}=\int_{B_{2\epsilon}}A_{g_{0}}(\nabla\phi_{\epsilon},\nabla f)dV_{g_{0}}\label{generela gauss bennet lower oder}\\
 & \leq\int_{B_{2\epsilon}}||A_{g_{0}}||_{\infty}\cdot|\nabla\phi_{\epsilon}|\cdot|\nabla f|dV_{g_{0}}\nonumber 
\end{align}
By $\holder's$ inequality, the lower order terms go to $0$ as $\epsilon$
goes to $0$. (\ref{eq: general gauss bonnet higher order}) and (\ref{generela gauss bennet lower oder})
proves (\ref{eq:Gauss bonnet holds for general situation}) which
concludes the whole proof.
\end{proof}
For later use, we state the following corollary.
\begin{cor}
Suppose that in a neighborhood of $p_{i}$, $\gamma(x)=\beta_{i}\eta_{i}\log(r)+f(x)$
for some smooth $f(x)$. Then $Q_{1}e^{4\gamma}\in L^{p}(dV_{0})$
for $1<p<2$ and $Q_{1}\in L^{2}(dV_{1})$ \label{cor:-for-.lemforregularity Q_rho}.
\end{cor}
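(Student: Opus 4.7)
The key identity to exploit is the Paneitz transformation law, which gives, away from the singular set,
\[
Q_{g_1} e^{4\gamma} = P_{g_0}\gamma + Q_{g_0}.
\]
Since $Q_{g_0}$ is smooth on $M$, the entire question reduces to controlling the pointwise behavior of $P_{g_0}\gamma$ near each $p_i$. My plan is to work in conformal normal coordinates at $p_i$, exactly as in the proof of Lemma \ref{lem:Log}, so that $\det g_0 = 1 + O(r^N)$ for arbitrarily large $N$.

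In such coordinates one computes $\Delta_{g_0}\log r = \frac{2}{r^2} + O(r^{N-2})$, and then a second application of $\Delta_{g_0}$ shows that $\Delta_{g_0}^2 \log r$ is bounded near $p_i$ (the would-be singular $r^{-4}$ coefficient cancels identically in dimension $4$, mirroring the fact that $-\frac{1}{8\pi^2}\log|x|$ is the bilaplacian Green's function on $\mathbb{R}^4$). The lower order part of the Paneitz operator, $\mathrm{div}(A_{g_0}\, d\log r)$ with $A_{g_0}=\tfrac{2}{3}Rg_0-2\mathrm{Ric}$, is estimated directly: $\nabla^2 \log r = O(r^{-2})$ and $A_{g_0}$ is smooth near $p_i$, so this term contributes $O(r^{-2})$. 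Combining with the smooth contributions from $f$ and from the cutoff $\eta_i$, I obtain
\[
|P_{g_0}\gamma(x)|\le C\, r_i^{-2}+C \quad \text{near } p_i,
\]
and hence $|Q_{g_1}e^{4\gamma}|\le C(r_i^{-2}+1)$ in a neighborhood of each $p_i$.

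The two claims are now straightforward integration. Using $dV_0 \sim r^3\,dr\,d\sigma$ near $p_i$,
\[
\int_{B_\epsilon(p_i)} |Q_{g_1}e^{4\gamma}|^p\, dV_0 \;\le\; C\int_0^\epsilon r^{-2p+3}\,dr,
\]
which is finite exactly when $p<2$, giving $Q_{g_1}e^{4\gamma}\in L^p(dV_0)$ for $1<p<2$. For the second statement, rewrite
\[
\int_M Q_{g_1}^2\, dV_1 \;=\; \int_M (Q_{g_1}e^{4\gamma})^2 e^{-4\gamma}\, dV_0,
\]
and note that near $p_i$, $e^{-4\gamma}=r^{-4\beta_i}\cdot(\text{smooth bounded})$, so the integrand is bounded by $C r^{-4-4\beta_i}$. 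Integrating against $r^3\,dr$ yields $r^{-1-4\beta_i}$, which is integrable near $r=0$ precisely because $\beta_i<0$ (as required by the definition of the conformal divisor), proving $Q_{g_1}\in L^2(dV_1)$.

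The main technical point is the pointwise estimate $P_{g_0}\gamma=O(r^{-2})$; once this is in hand, both integrability assertions are elementary. The subtlety in that estimate is that the naively most singular term $\Delta_{g_0}^2 \log r$ is actually bounded (thanks to the $\mathbb{R}^4$ cancellation and the conformal normal coordinate gauge), so the leading singularity is genuinely of order $r^{-2}$ and is controlled by the second-order divergence piece of $P_{g_0}$.
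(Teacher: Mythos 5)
Your argument is correct and follows essentially the same route as the paper: both use the transformation law $Q_{1}e^{4\gamma}=P_{g_{0}}\gamma+Q_{g_{0}}$, the conformal normal coordinate computation from Lemma \ref{lem:Log} to see that $\Delta_{g_{0}}^{2}\log r$ is harmless (the paper records it as $O(r^{N-4})$), the estimate $|P_{g_{0}}\gamma|\leq Cr^{-2}$ coming from the divergence term, and then the same elementary integrations in polar coordinates, with your $\int(Q_{1}e^{4\gamma})^{2}e^{-4\gamma}dV_{0}$ being exactly the paper's bound $Q_{1}^{2}e^{4\gamma}\sim O(r^{-4-4\beta_{i}})$ with $-4-4\beta_{i}>-4$. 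No gaps; your write-up just makes the pointwise estimate and the integrability thresholds slightly more explicit than the paper does.
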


\begin{proof}
Recall $P_{g}v=\Delta_{g}^{2}v+\mathrm{div}(A_{g}dv)$. As in the
definition (\ref{eq:singular term}) in a neighborhood of $p_{i}$,
\[
\gamma(x)=\beta_{i}\log(r)+f(x),
\]
 for some smooth $f(x)$. By calculation in Lemma \ref{lem:Log},
we see that, 
\begin{equation}
\Delta_{g_{0}}^{2}\gamma(x)=O(r^{N-4}).\label{eq:Leading term in gamma; cor3.3}
\end{equation}
Recall the leading term of $P_{g_{0}}$ is $\Delta_{g_{0}}^{2}$.
Therefore, we obtain from (\ref{eq:Leading term in gamma; cor3.3})
that 
\begin{equation}
|P_{g_{0}}\gamma|\leq Cr^{-2}.\label{eq:Pg_0 estimate cor 3.3}
\end{equation}
Thus, we conclude from (\ref{eq:Leading term in gamma; cor3.3}) that
\begin{equation}
Q_{1}e^{4\gamma}=(P_{g_{0}}\gamma+Q_{0})\sim O(r^{-2}),\label{eq:Q_1=00005Crho;cor 3.3}
\end{equation}
which implies that $Q_{1}e^{4\gamma}\in L^{p}(dV_{0})$ for $1<p<2$.
Moreover, we write (\ref{eq:Q_1=00005Crho;cor 3.3}) as $Q_{1}\sim O(r^{-2-4\beta_{i}})$,
then 
\[
Q_{1}^{2}e^{4\gamma}\sim O(r^{-4-4\beta_{i}}).
\]
Since $-4-4\beta_{i}>-4$, we have $Q_{1}\in L^{2}(dV_{1})$. 
\end{proof}

\section{A Modified Moser-Trudinger-Adams Inequality}

In this Section, we establish a conic Moser-Trudinger-Adams inequality.

A singular version of Moser-Trudinger-Adams inequality on bounded
domains in Euclidean spaces has been proved by Lam and Lu \cite{Lam-Lu}.
In this section, we first give a quick proof based on a comparison
principle of Talenti\cite{talenti1976elliptic} and a lemma by Tarsi
\cite{tarsi2012adams}. Then we establish a corresponding inequality
for general conic 4-manifolds.

We first introduce Talenti's comparison principle. Let $f$ be a measurable
function with support in a bounded domain $\Omega\subset\mathbb{R}^{n}$.
Let $\lambda(s)=m(\{x:|f(x)|>s\})$ and $f^{*}(t)=sup\{s>0:\lambda(s)>t\}$.
The spherical rearrangement $f^{\#}(x)$ of $f$ is defined to be
\[
u^{\#}(x)=u^{*}(\omega_{n}|x|^{n}),\ x\in\Omega^{\#}.
\]
Here $\Omega^{\#}$ is a open ball in $\mathbb{R}^{n}$ with the same
measure as $\Omega$, $\omega_{n}$ is the volume of a unit $n$ dimensional
ball. Let
\begin{equation}
b_{n,2}=\frac{1}{\omega_{n}}\left[\frac{4\pi^{\frac{n}{2}}}{\Gamma(\frac{n}{2}-1)}\right]^{\frac{n}{n-2}},\label{eq:definition of b_n,2}
\end{equation}
following \cite{adams1988sharp}. Note that $\omega_{n}=\pi^{n/2}/\Gamma(\frac{n}{2}+1)$.
So we have $b_{n,2}=[\omega_{n}^{\frac{2}{n}}n(n-2)]^{\frac{n}{n-2}}$.
\begin{lem}
[Talenti's principle]If $u,v$ are solutions of the following equations,

\begin{align}
\begin{cases}
\Delta u(x)=f(x) & x\in\Omega,\\
u(x)=0 & x\in\partial\Omega,
\end{cases} &  & \begin{cases}
\Delta v(x)=f^{\#}(x) & x\in\Omega^{\#},\\
v(x)=0 & x\in\partial\Omega^{\#},
\end{cases}\label{eq:talenti's principle}
\end{align}
then we have that 
\[
v\geq u^{\#}.
\]
\end{lem}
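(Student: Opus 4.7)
The plan is to follow Talenti's classical symmetrization argument, which runs through the distribution function together with three standard ingredients: the co-area formula, the Euclidean isoperimetric inequality, and the Hardy–Littlewood rearrangement inequality. A preliminary sign normalization is needed: since $f^{\#}$ only sees $|f|$, I would reduce to the case where $f$ and $f^{\#}$ have a definite sign so that, after a consistent choice of convention for $\Delta$, both $u$ and $v$ are of the same sign and the target inequality $u^{\#} \leq v$ (equivalently $v \geq u^{\#}$) is sensible via the maximum principle.

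Concretely, I would set $\mu(t) = m(\{x \in \Omega : u(x) > t\})$ and combine two exact identities obtained by slicing $\Omega$ by level sets of $u$. The co-area formula gives
\[
-\mu'(t) = \int_{\{u = t\}} \frac{1}{|\nabla u|} \, dH^{n-1},
\]
while the divergence theorem applied to the PDE yields
\[
\int_{\{u > t\}} |f| \, dx = \int_{\{u = t\}} |\nabla u| \, dH^{n-1}.
\]
Multiplying these and applying Cauchy–Schwarz to the factorization $1 = |\nabla u|^{1/2} \cdot |\nabla u|^{-1/2}$ on the level set produces
\[
P(\{u > t\})^{2} \leq \bigl(-\mu'(t)\bigr) \int_{\{u > t\}} |f| \, dx,
\]
where $P$ is the perimeter. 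The isoperimetric inequality $P(\{u > t\}) \geq n \omega_{n}^{1/n} \mu(t)^{(n-1)/n}$ and the Hardy–Littlewood bound $\int_{\{u > t\}} |f| \, dx \leq \int_{0}^{\mu(t)} f^{*}(s) \, ds$ then combine to the ODE inequality
\[
n^{2} \omega_{n}^{2/n} \mu(t)^{2(n-1)/n} \leq \bigl(-\mu'(t)\bigr) \int_{0}^{\mu(t)} f^{*}(s) \, ds.
\]

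To finish, I would rewrite this as a pointwise differential inequality for $u^{*} = \mu^{-1}$ on $(0, |\Omega|)$, and observe that running the identical argument for $v$ on $\Omega^{\#}$ converts \emph{every} inequality into an equality: $v$ is radially decreasing, so its level sets are balls (saturating the isoperimetric inequality), $|\nabla v|$ is constant on each level set (saturating Cauchy–Schwarz), and $f^{\#}$ coincides with its own rearrangement (saturating Hardy–Littlewood). Hence $v^{*}$ satisfies the ODE as an equality. Integrating from $s = |\Omega|$ inward, where both $u^{*}$ and $v^{*}$ vanish, yields $u^{*}(s) \leq v^{*}(s)$ for all $s$, i.e.\ $u^{\#} \leq v$ on $\Omega^{\#}$, which is the claim.

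The principal technical obstacle is the rigorous justification of the co-area identities pointwise in $t$ rather than merely a.e., so that the ODE comparison can be integrated without losing information. This requires verifying absolute continuity of $\mu$ and negligibility of the critical values of $u$, which follows from Sard's theorem together with the elliptic regularity of $u$ inside $\Omega$; everything else in the argument is then a clean chain of inequalities.
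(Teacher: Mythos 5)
The paper does not actually prove this lemma: it is quoted as Talenti's comparison principle and the proof is delegated to the reference \cite{talenti1976elliptic}. Your outline is precisely the classical symmetrization argument from that source (co-area formula plus divergence theorem on superlevel sets, Cauchy--Schwarz on the level surface, the isoperimetric inequality, Hardy--Littlewood, and then the observation that the radial problem saturates the resulting differential inequality, so that integrating from $s=|\Omega|$ gives $u^{*}\leq v^{*}$), so in substance you have reconstructed the intended proof rather than found a new one. Two caveats are worth making explicit. First, your ``reduction to $f$ of definite sign'' is not a genuine reduction, since replacing $f$ by $|f|$ changes $u$; the standard way to handle signed data is to run the argument on the level sets of $|u|$ and use the one-sided bound $\int_{\{|u|>t\}}f\,\mathrm{sgn}(u)\,dx\leq\int_{0}^{\mu(t)}f^{*}(s)\,ds$, which is all the chain of inequalities requires. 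Relatedly, with the paper's sign convention $\Delta u=f$, $\Delta v=f^{\#}$ the stated conclusion $v\geq u^{\#}$ only makes sense after the sign normalization you allude to (read it as $-\Delta u=f$, $-\Delta v=f^{\#}$, or compare $|u|^{\#}$ with the solution for $|f|^{\#}$); for the application in Section 4, where only $\|\Delta u\|_{n/2}$ enters, this is harmless. Second, the technical worry you raise at the end is real but is usually bypassed rather than resolved: one does not need absolute continuity of $\mu$ or pointwise co-area identities, since Talenti integrates the differential inequality in $t$ using only the monotonicity of $\mu$, and for $v$ one can avoid the ``all inequalities are equalities'' discussion entirely by writing the explicit radial formula
\begin{equation*}
v^{*}(s)=\int_{s}^{|\Omega|}\frac{1}{n^{2}\omega_{n}^{2/n}\tau^{2-2/n}}\left(\int_{0}^{\tau}f^{*}(\sigma)\,d\sigma\right)d\tau
\end{equation*}
and comparing it with the integrated inequality for $u^{*}$. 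With those adjustments your sketch is a correct and complete route to the lemma.
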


We apply Talenti's principle to a $C^{2}$ function $u$ with support
in $\Omega$. By definition, $u^{\#}$ is non-increasing. Therefore,
if we fix $b>0$ and $-1<\beta<0$, we have
\begin{align}
\int_{\Omega}e^{bu^{\frac{n}{n-2}}}|x|^{n\beta}dx & \leq\int_{\Omega^{\#}}e^{b(u^{\#})^{\frac{n}{n-2}}}|x|^{n\beta}dx+C(\Omega)\nonumber \\
 & \leq\int_{\Omega^{\#}}e^{bv^{\frac{n}{n-2}}}|x|^{n\beta}dx+C(\Omega),\label{eq:Compare u and u^=000023}
\end{align}
where $v$ comes from (\ref{eq:talenti's principle}). Thus, in order
to prove the modified Moser-Trudiger-Adams inequality, we only have
to consider spherical symmetric domains and functions. 

We then state a lemma of Tarsi \cite{tarsi2012adams}.
\begin{lem}
\cite{tarsi2012adams}. \label{lem:.Tarsi's lemma-.}Let $p>1$. For
any $r>0$ there is a constant $C=C(p,r)$ such that for any positive
measurable function $f(s)$ on $(1,+\infty)$, satisfying 
\[
\int_{1}^{\infty}f^{p}s^{2p-1}ds\leq1
\]
then
\[
\int_{1}^{\infty}e^{rF^{q}(t)}\frac{dt}{t^{r+1}}\leq C
\]
where $\frac{1}{p}+\frac{1}{q}=1$, and 
\[
F(t)=\int_{1}^{t}\int_{\tau}^{\infty}f(s)dsd\tau.
\]
\end{lem}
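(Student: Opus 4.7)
The plan is to reduce the claim, via a logarithmic change of variables, to the classical one-dimensional Moser lemma. Substituting $s = e^\xi$ and $t = e^\sigma$ and setting $\phi(\xi) = f(e^\xi) e^{2\xi}$, the hypothesis $\int_1^\infty f^p s^{2p-1} ds \leq 1$ becomes the unweighted $\|\phi\|_{L^p(0,\infty)} \leq 1$, while
\[
\int_1^\infty e^{rF^q(t)} \frac{dt}{t^{r+1}} = \int_0^\infty e^{rF^q(e^\sigma) - r\sigma} d\sigma,
\]
so that the target becomes an Adams/Moser-type exponential integral against the weight $e^{-r\sigma}d\sigma$.

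Next I would compute the $\sigma$-derivative $\frac{d}{d\sigma} F(e^\sigma) = e^\sigma \int_{e^\sigma}^\infty f(s) ds$ and express it, after the substitution $s = e^\xi$ and using $f(e^\xi)e^\xi = \phi(\xi)e^{-\xi}$, as
\[
N(\sigma) := \int_\sigma^\infty e^{-(\xi-\sigma)} \phi(\xi) d\xi \geq 0,
\]
so that $F(e^\sigma) = \int_0^\sigma N(\tau) d\tau$ (since $F(1)=0$). The crucial observation is that $N$ is the one-sided convolution of $\phi$ with the $L^1$ kernel $e^{-u}\chi_{u>0}$, so by Minkowski's integral inequality (equivalently Young's convolution inequality),
\[
\|N\|_{L^p(0,\infty)} \leq \|e^{-u}\chi_{u>0}\|_{L^1}\,\|\phi\|_{L^p} = 1.
\]
Thus the weighted $L^p$ constraint on $f$ is converted into an unweighted $L^p$ constraint on $N$ without any loss.

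With $N \geq 0$ and $\|N\|_p \leq 1$, the function $\sigma \mapsto F(e^\sigma)$ is the primitive of an $L^p$-normalized nonnegative function, which is precisely the one-dimensional Moser setup. The sharp one-dimensional Moser lemma gives
\[
\int_0^\infty \exp\!\left(\Bigl(\textstyle\int_0^\sigma N\, d\tau\Bigr)^q - \sigma\right) d\sigma \leq C(p).
\]
For arbitrary $r > 0$, define $\widetilde N(\sigma) = r^{-1/p} N(\sigma/r)$; a direct computation shows $\|\widetilde N\|_{L^p} = \|N\|_{L^p} \leq 1$ and $\int_0^\sigma \widetilde N\, d\tau = r^{1/q}\int_0^{\sigma/r} N\, d\eta$. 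Applying the Moser lemma to $\widetilde N$ and substituting $\sigma = r\rho$ yields
\[
\int_0^\infty \exp\!\left(r\Bigl(\textstyle\int_0^\rho N\, d\tau\Bigr)^q - r\rho\right) d\rho \leq C(p)/r,
\]
which is precisely the desired inequality with $C(p,r) = C(p)/r$.

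The main subtlety is that the naive Hölder bound gives only $F^q(e^\sigma) \leq \sigma + O(1)$, which makes the integrand pointwise $O(1)$ but not integrable. The required improvement is hidden in the one-dimensional Moser lemma itself, whose proof rests on Moser's splitting of the $\sigma$-interval according to the cumulative mass $\int_0^\sigma N^p d\tau$: on the region where this mass is strictly less than $1$ one has a strict improvement of the Hölder exponent, while on the complementary region the residual mass is so small that the incremental growth of the primitive is exponentially dominated. The technical heart of the argument is therefore this classical Moser bookkeeping, which we invoke as a black box; everything else is the change-of-variables/convolution reduction above.
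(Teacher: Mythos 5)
Your proof is correct, and it is genuinely different from what the paper does: the paper offers no argument at all for this lemma, simply quoting it from Tarsi, whose own proof runs through an Adams--Garsia-type kernel lemma (writing $F(t)=\int_1^\infty(\min(s,t)-1)f(s)\,ds$ and verifying the hypotheses of that lemma for the resulting kernel). Your route is cleaner: the substitutions $s=e^\xi$, $t=e^\sigma$, $\phi(\xi)=f(e^\xi)e^{2\xi}$ do convert the weighted constraint exactly into $\|\phi\|_{L^p(0,\infty)}\le 1$ (the exponent bookkeeping $f^p s^{2p-1}ds=\phi^p d\xi$ checks out), the identity $\frac{d}{d\sigma}F(e^\sigma)=\int_\sigma^\infty e^{-(\xi-\sigma)}\phi(\xi)\,d\xi=N(\sigma)$ is right, Minkowski's integral inequality gives $\|N\|_{L^p(0,\infty)}\le\|\phi\|_{L^p(0,\infty)}\le 1$ with no loss since the kernel $e^{-u}\chi_{u>0}$ has unit $L^1$ mass, and the rescaling $\widetilde N(\sigma)=r^{-1/p}N(\sigma/r)$ correctly converts the classical $r=1$ statement into the stated one with $C(p,r)=C(p)/r$ (which is of the right order: taking $f\equiv 0$ already forces $C\ge 1/r$). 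What your approach buys is that the only nonelementary input is Moser's one-dimensional lemma from his 1971 paper, which is older and more standard than the kernel lemma Tarsi invokes, and you get an explicit dependence on $r$; what it costs is precisely that reliance, so you should cite Moser's lemma explicitly rather than leave it as an unnamed black box, since, as you correctly observe, all the analytic content beyond H\"older lives there.

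Two small points worth making explicit in a final write-up: the inner integral $\int_t^\infty f(s)\,ds$ is finite for $t\ge 1$ by H\"older against the weight $s^{2p-1}$ (the exponent $-(2p-1)/(p-1)<-1$), so $F$ is well defined and $\sigma\mapsto F(e^\sigma)$ is absolutely continuous, justifying $F(e^\sigma)=\int_0^\sigma N(\tau)\,d\tau$; and Moser's lemma is applied to the nonnegative function $\widetilde N$ without any monotonicity or continuity hypotheses, which is exactly the generality in which it is proved, so no hypotheses are silently strengthened.
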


We now prove the following Moser-Trudinger-Adams inequality in bounded
Euclidean domains, which first appears in \cite{Lam-Lu}. 
\begin{thm}
\label{thm:modified Adams-}Suppose that $\Omega\subset\mathbb{R}^{n}$
is a bounded domain and $0\in\Omega$. Suppose $u\in C_{c}^{2}(\Omega)$
and $-1<\beta<0$. There is a $C=C(\Omega)$ such that if $\|\Delta u\|_{n/2}\leq1$
then
\begin{equation}
\int_{\Omega}\exp(b_{n,2}(1+\beta)|u|^{\frac{n}{n-2}})|x|^{n\beta}dx\leq C.\label{eq:Modified Adams'}
\end{equation}
 The coefficient $b_{n,2}(1+\beta)$ here is sharp in the sense that
the inequality fails for bigger constant. In particular, $b_{4,2}=32\pi^{2}$.
\end{thm}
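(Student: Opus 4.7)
The plan is to execute the two-step reduction indicated just before the statement: first use Talenti's symmetrization (the Talenti principle above) to reduce to a radial function on a ball, and then match the resulting one-dimensional integral to the format of Tarsi's Lemma \ref{lem:.Tarsi's lemma-.}. Set $f := \Delta u$ (extended by zero outside $\Omega$) and let $v$ solve $\Delta v = f^{\#}$ on $\Omega^{\#} = B_R$ with zero Dirichlet data; then $v$ is radial and of constant sign, and (\ref{eq:Compare u and u^=000023}) already reduces the problem to bounding
$$\int_{B_R} \exp\bigl(b_{n,2}(1+\beta)|v|^{n/(n-2)}\bigr)|x|^{n\beta}\,dx.$$
Explicitly,
$$|v|(r) = \int_r^R s^{1-n}\int_0^s \tau^{n-1} f^{\#}(\tau)\,d\tau\,ds,$$
and the hypothesis reads $n\omega_n\int_0^R (f^{\#})^{n/2}\, r^{n-1}\,dr \leq 1$.

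Next I would change variables by $t = (R/r)^n$ (so $r = R$ corresponds to $t = 1$ and $r \to 0$ to $t \to \infty$), together with the analogous substitution in the inner integral. Writing $\Phi(t) := [b_{n,2}(1+\beta)]^{(n-2)/n}|v|(Rt^{-1/n})$ and $\tilde f(t):=f^{\#}(Rt^{-1/n})$, the Jacobians combine so that, up to an $R$-dependent constant, the weighted measure $|x|^{n\beta}dx$ on $B_R$ becomes $t^{-(2+\beta)}\,dt$ on $(1,\infty)$; the $L^{n/2}$ normalization becomes $\int_1^\infty \tilde f(t)^p\, t^{2p-1}\,dt \leq 1$ with $p = n/2$; and the double integral defining $|v|$ is, up to the constant $[b_{n,2}(1+\beta)]^{-(n-2)/n}$, precisely the $F$ appearing in Lemma \ref{lem:.Tarsi's lemma-.} applied to $\tilde f$. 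The key algebraic check is that the definition (\ref{eq:definition of b_n,2}) of $b_{n,2}$ exactly absorbs the $\omega_n$ factors coming from the Jacobian and from the radial Newtonian kernel $s^{1-n}\tau^{n-1}$. With $r_0 := 1+\beta > 0$, so that $r_0 + 1 = 2+\beta$, Lemma \ref{lem:.Tarsi's lemma-.} with $q = n/(n-2)$ then gives
$$\int_1^\infty \exp\bigl(r_0\,\Phi(t)^q\bigr)\,t^{-(r_0+1)}\,dt \leq C,$$
which upon undoing the substitution is exactly (\ref{eq:Modified Adams'}).

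For sharpness of $b_{n,2}(1+\beta)$, I would adapt the Moser--Adams test sequence to the weighted setting: take $u_k(x) = C_k\,\varphi_k(|x|)$, where $\varphi_k$ is a suitable $C^2$ truncation of $\log(R/|x|)$ on the annulus $R/k \le |x| \le R$ as in \cite{adams1988sharp}, with $C_k$ chosen so that $\|\Delta u_k\|_{n/2} = 1$; this forces $C_k \sim (\log k / b_{n,2})^{(n-2)/n}$. The weighted mass of $\{|x| < R/k\}$ is of order $k^{-n(1+\beta)}$ while $\exp(\kappa u_k^{n/(n-2)}) \sim k^{\kappa/b_{n,2}}$ on this set, so the integral stays bounded if and only if $\kappa \leq b_{n,2}(1+\beta)$. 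The main technical obstacle throughout is the bookkeeping in the substitution: one must verify that the $\omega_n$ and $n$ factors produced by the Jacobian and by the radial Green's function combine with (\ref{eq:definition of b_n,2}) so that the exponential coefficient emerges as exactly $b_{n,2}(1+\beta)$ and not a shifted constant; once this is settled, Tarsi's lemma delivers the estimate immediately.
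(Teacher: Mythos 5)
Your overall plan (Talenti symmetrization to reduce to a radial function, then Tarsi's Lemma \ref{lem:.Tarsi's lemma-.}) is the same as the paper's, but the step you yourself flag as ``the main technical obstacle'' --- the bookkeeping in the change of variables --- is exactly where your argument breaks, because your substitution is the wrong one. The correct substitution (the one the paper uses in $w(t)=n^{2/n}\omega_n^{2/n}(n-2)^{2-2/n}v(Rt^{-1/(n-2)})$) is $t=(R/r)^{n-2}$, i.e.\ the Emden--Fowler change of variables that linearizes the radial harmonic profile $r^{2-n}$. Its whole point is that for radial $v$ one gets $\frac{d^2w}{dt^2}=\frac{r^{2n-2}}{(n-2)^2R^{2n-4}}\,\Delta v$, so the hypothesis $\|\Delta v\|_{L^{n/2}(B_R)}\le 1$ transforms \emph{exactly} into Tarsi's normalization $\int_1^\infty |w''|^{n/2}s^{n-1}ds\le 1$, while $|x|^{n\beta}dx$ becomes (a constant times) $t^{-1-\frac{n\alpha}{n-2}}dt$ with $\alpha=1+\beta$; one then applies Tarsi with parameter $\frac{n\alpha}{n-2}$, and the normalizing constant in $w$ is what turns $\frac{n\alpha}{n-2}|w|^{n/(n-2)}$ into $b_{n,2}\alpha|v|^{n/(n-2)}$ via (\ref{eq:definition of b_n,2}).

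With your substitution $t=(R/r)^n$ the weight does become $t^{-(2+\beta)}dt$, but Tarsi's lemma rigidly couples the weight exponent to the exponential coefficient, so you are forced to take $r_0=1+\beta$ and $F=b_{n,2}^{(n-2)/n}|v|$ (note in passing that your $\Phi$ double-counts the factor $1+\beta$: with $r_0=1+\beta$ and $q=\frac{n}{n-2}$ one gets $r_0\Phi^q=b_{n,2}(1+\beta)^2|v|^{n/(n-2)}$). The fatal issue is the normalization of $-F''$: under $t=(R/r)^n$ a direct computation gives $\frac{d^2}{dt^2}\bigl(v(Rt^{-1/n})\bigr)=\frac{r^{2n+2}}{n^2R^{2n}}\bigl(v''+\frac{n+1}{r}v'\bigr)$, i.e.\ the radial Laplacian in dimension $n+2$, not $\Delta v=v''+\frac{n-1}{r}v'$. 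Hence $\|\Delta u\|_{n/2}\le 1$ does not translate into $\int_1^\infty(-F'')^{n/2}s^{n-1}ds\le 1$; even if one tried to absorb the discrepancy by Hardy-type estimates, the constant $1$ required by Tarsi (and hence the sharp coefficient $b_{n,2}(1+\beta)$) would be lost. So the announced cancellation of the $\omega_n$ and $n$ factors cannot be ``settled'' with your choice of $t$; you must switch to $t=(R/r)^{n-2}$, after which the argument is the paper's. Concerning sharpness, the paper deduces it from the capacity limit $\lim_{r\to0}C_{2,n/2}(B_r;B_1)^{2/(n-2)}\log\frac1r$ computed by Adams; your explicit log-type test sequence is a legitimate alternative in spirit, but the asymptotics $\|\Delta u_k\|_{n/2}$ you assert is precisely Adams' capacity estimate, so it does not come for free and needs to be carried out or cited.
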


\begin{proof}
By (\ref{eq:Compare u and u^=000023}), we only have to consider radial
functions. Suppose that $v(x)$ is a $C^{2}$ radially decreasing
function with support in ball $B_{R}$ . Let 
\begin{equation}
w(t)=n^{\frac{2}{n}}\omega_{n}^{\frac{2}{n}}(n-2)^{2-\frac{2}{n}}v(Rt^{-1/(n-2)}).\label{eq:w(t);theorem 4.3}
\end{equation}
 We see that 
\begin{equation}
\int_{B_{R}}\exp(b_{n,2}\alpha|v|^{\frac{n}{n-2}})|x|^{n\beta}dx=\frac{n\omega_{n}R^{n\alpha}}{n-2}\int_{1}^{\infty}\exp\left(\frac{n\alpha}{n-2}|w|^{\frac{n}{n-2}}\right)\frac{dt}{t^{1+\frac{n\alpha}{n-2}}},\label{eq:connecting eq; Theorem 4.3}
\end{equation}
where $\alpha=\beta+1<1$. The condition $\int_{B_{R}}|\Delta v|^{n/2}dx\leq1$
is equivalent to 
\[
\int_{1}^{\infty}|w''(s)|^{\frac{n}{2}}s^{n-1}ds\leq1,
\]
where $w(t)=\int_{1}^{t}\int_{z}^{\infty}|w''(s)|dsdz$. Then, by
Lemma \ref{lem:.Tarsi's lemma-.}, 
\begin{equation}
\int_{1}^{\infty}\exp(r|w|^{\frac{n}{n-2}}(t))\frac{dt}{t^{r+1}}\leq C_{0},\label{eq: connecting eq2; Theorem 4.3}
\end{equation}
 where $C_{0}=C_{0}(n,r)$. Set $r=\frac{n\alpha}{n-2}$, then from
(\ref{eq:connecting eq; Theorem 4.3}) and (\ref{eq: connecting eq2; Theorem 4.3}),
we get
\begin{equation}
\int_{B_{R}}\exp(b_{n,2}\alpha|v|^{\frac{n}{n-2}})|x|^{n\beta}dx\leq C_{0}.\label{eq:connecting eq 3; theorem 4.3}
\end{equation}

Next, we prove the sharpness of the constant $b_{n,2}\alpha$. We
consider the following set: 
\[
\mathcal{B}=\left\{ b:\exists C_{0},\int_{B_{1}}e^{b|v|^{\frac{n}{n-2}}}|x|^{n\beta}dx\leq C_{0},\forall v\in C_{c}^{\infty}(B_{1})\ and\ \|\Delta v\|_{n/2}=1\right\} .
\]
Then, (\ref{eq:connecting eq 3; theorem 4.3}) implies $b_{n,2}\alpha\in\mathcal{B}$.
Let $u$ be a positive function with support in the unit ball $B_{1}$
and equals $1$ in $B_{r}$ with $0<r<1$. Let $b\in\mathcal{B}$,
then
\[
C_{0}\geq r^{n\alpha}\exp(b/\|\Delta u\|_{n/2}^{\frac{n}{n-2}}).
\]
This leads to
\begin{equation}
b\leq\alpha n\lim_{r\to0}C_{2,\frac{n}{2}}(B_{r};B_{1})^{\frac{2}{n-2}}\left(\log\frac{1}{r}\right),\label{eq:sharpness of bn2}
\end{equation}
where $C_{2,\frac{n}{2}}(B_{r};B_{1})=\inf\|\Delta u\|_{\frac{n}{2}}^{\frac{n}{2}}$
and the infimum is taken over all $u\in C_{0}^{\infty}(B)$ such that
$u=1$ on $B_{r}$. By \cite{adams1988sharp}, the right hand side
of (\ref{eq:sharpness of bn2}) is just $b_{n,2}\alpha$. Therefore,
$b_{n,2}\alpha=\sup\mathcal{B}$. 
\end{proof}
Our goal is to apply Theorem \ref{thm:modified Adams-} to obtain
estimates on conic 4-manifolds. In the rest of this section, we work
in dimension 4. Since Talenti's principle is also proved originally
in the form of Newton potential\cite{talenti1976elliptic}, from the
same argument in Theorem \ref{thm:modified Adams-}, the next corollary
is easily followed:
\begin{cor}
\label{cor:corollary for manifold adams}Suppose that $\Omega\subset\mathbb{R}^{4}$
is a bounded domain, $f\in C_{c}(\Omega)$ and $\|f\|_{L^{2}(\Omega)}\leq1$.
Let $u$ be the Newton potential of $f$, i.e.
\[
u=\int_{\Omega}\Gamma(x-y)f(y)dy,
\]
where $\Gamma(x)=\frac{1}{4\pi^{2}|x|^{2}}$. Then we have that 
\[
\int_{\Omega}\exp(32\pi^{2}(1+\beta)u^{2})|x|^{4\beta}dx\leq C.
\]
\end{cor}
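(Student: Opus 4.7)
The corollary is the Newton-potential analogue of Theorem~\ref{thm:modified Adams-}, and its proof follows the same three steps---rearrangement, explicit change of variables, and Tarsi's lemma---with the Newton-potential form of Talenti's symmetrization principle \cite{talenti1976elliptic} replacing the Dirichlet form.

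First, since $|u(x)| \le \int_\Omega \Gamma(x-y)|f(y)|\,dy$ and replacing $f$ by $|f|$ does not change $\|f\|_{L^2}\le 1$, I may assume $f \ge 0$, so $u \ge 0$. Talenti's comparison principle for Newton potentials then yields the pointwise bound $u^{\#}(x) \le v(x)$ with $v(x) := (\Gamma * f^{\#})(x)$, where $f^{\#}$ is the symmetric decreasing rearrangement of $f$ (supported in $\Omega^{\#} = B_R$) and $v$ is radial. Since $\beta<0$ makes $|x|^{4\beta}$ radially decreasing, the Hardy--Littlewood rearrangement inequality gives
\[
\int_\Omega \exp\bigl(32\pi^2(1+\beta)u^2\bigr)\,|x|^{4\beta}\,dx \;\le\; \int_{B_R} \exp\bigl(32\pi^2(1+\beta)v^2\bigr)\,|x|^{4\beta}\,dx,
\]
reducing the problem to the radial case.

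Second, for the radial $v$ satisfying $-\Delta v = f^{\#}$ on $\mathbb{R}^4$ with $v\to 0$ at infinity, I perform the same change of variable as in Theorem~\ref{thm:modified Adams-}, namely $w(t) := 4\pi\, v(Rt^{-1/2})$ on $t\in[1,\infty)$. A direct calculation from the radial form of $-\Delta v = f^{\#}$ gives $w''(t) = -\pi R^2 t^{-3} f^{\#}(Rt^{-1/2})$, so the hypothesis $\|f^{\#}\|_{L^2(B_R)}\le 1$ translates into $\int_1^\infty |w''(s)|^2 s^3\,ds \le 1$, and the right-hand side above transforms into
\[
\pi^2 R^{4(1+\beta)} \int_1^\infty \exp\bigl(2(1+\beta)\,w(t)^2\bigr)\,\frac{dt}{t^{2(1+\beta)+1}}.
\]
Tarsi's Lemma~\ref{lem:.Tarsi's lemma-.} with $p=q=2$ and $r=2(1+\beta)$, applied to $F(t) := \int_1^t\int_\tau^\infty |w''(s)|\,ds\,d\tau$ (so that $w = F + w(1)$), then produces the desired estimate.

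The only substantive difference from Theorem~\ref{thm:modified Adams-} is the nonvanishing boundary value $w(1) = 4\pi\,v(R)$. However, the explicit formula $v(R) = \tfrac{1}{2R^2}\int_0^R s^3 f^{\#}(s)\,ds$ combined with Cauchy--Schwarz gives $v(R) \le C$ independently of $f$ and $R$, so $w(1)$ is a bounded universal constant; its contribution to $w^2 = (F + w(1))^2$ is absorbed into the final bound via the inequality $w^2 \le (1+\epsilon)F^2 + C_\epsilon$ together with Tarsi's lemma applied at the slightly perturbed exponent $r(1+\epsilon)$. This is the only technical point requiring more care than in Theorem~\ref{thm:modified Adams-}, and I expect it to be the main (if minor) obstacle in writing out a rigorous proof.
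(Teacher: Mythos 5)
Your plan coincides in outline with the paper's own (very brief) proof, which simply invokes Talenti's comparison in its Newton-potential form and repeats the argument of Theorem \ref{thm:modified Adams-}; and your computations are correct as far as they go: the reduction to radial $v=\Gamma*f^{\#}$, the change of variables $w(t)=4\pi v(Rt^{-1/2})$, the identity $w''(t)=-\pi R^{2}t^{-3}f^{\#}(Rt^{-1/2})$, the translation of $\|f\|_{L^{2}}\leq1$ into $\int_{1}^{\infty}|w''(s)|^{2}s^{3}ds\leq1$, and the weight $t^{-(2(1+\beta)+1)}$ are all right, and you correctly spot the one genuine difference from Theorem \ref{thm:modified Adams-}, namely $w(1)=4\pi v(R)\neq0$. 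However, your treatment of that boundary term has a real gap. First, the absorption $w^{2}\leq(1+\epsilon)F^{2}+C_{\epsilon}$ followed by ``Tarsi at exponent $r(1+\epsilon)$'' does not close: Lemma \ref{lem:.Tarsi's lemma-.} with parameter $r(1+\epsilon)$ controls $\int_{1}^{\infty}e^{r(1+\epsilon)F^{2}}t^{-(r(1+\epsilon)+1)}dt$, whereas you need the larger integral with the heavier weight $t^{-(r+1)}$; by the sharpness statement in Theorem \ref{thm:modified Adams-} that larger integral is in fact \emph{not} uniformly bounded. Second, and more fundamentally, no argument using only ``$w(1)\leq$ universal constant'' can succeed: for a fixed $c_{0}>0$ take $f_{T}(s)=(\log T)^{-1/2}s^{-2}\chi_{[1,T]}(s)$, so that $\int_{1}^{\infty}f_{T}^{2}s^{3}ds=1$ and $F_{T}(t)=\sqrt{\log T}+O((\log T)^{-1/2})$ for all $t\geq T$; then $\int_{1}^{\infty}e^{r(F_{T}+c_{0})^{2}}t^{-(r+1)}dt\geq\frac{1}{r}\exp\left(2rc_{0}\sqrt{\log T}+O(1)\right)\to\infty$ as $T\to\infty$. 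So the statement ``$F$ admissible and $0\leq c_{0}\leq1/\sqrt{2}$ imply $\int_{1}^{\infty}e^{r(F+c_{0})^{2}}t^{-(r+1)}dt\leq C$'' is false, and your universal bound $v(R)\leq C$ discards exactly the information that makes the corollary true.

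What saves the estimate is the coupling between the boundary value and $f$: by Newton's theorem, $4\pi v(R)=\frac{2\pi}{R^{2}}\int_{0}^{R}s^{3}f^{\#}(s)ds=\int_{1}^{\infty}|w''(s)|ds$, so that $w(t)\leq\int_{1}^{t}\int_{\tau}^{\infty}|w''(s)|\,ds\,d\tau+\int_{1}^{\infty}|w''(s)|\,ds$, and for the concentrating family above this additive term is $O((\log T)^{-1/2})$, not a fixed constant --- which is why the counterexample does not arise from an actual Newton potential. To finish correctly you therefore need a one-dimensional lemma that tolerates precisely the extra term $\int_{1}^{\infty}|w''|$, for instance Adams' original Lemma 1 in \cite{adams1988sharp} (whose kernel hypotheses are designed to absorb such contributions in the Riesz-potential setting), the more general boundary-condition versions in \cite{tarsi2012adams}, or simply the Euclidean singular Adams inequality of \cite{Lam-Lu} quoted in potential form --- rather than the bare Lemma \ref{lem:.Tarsi's lemma-.} plus a crude bound on $w(1)$.
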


Using the Green's function of the Laplacian on a 4-manifold, Corollary
\ref{cor:corollary for manifold adams} allows us to obtain estimates
in a neighborhood of a conic point. Following \cite{branson1992estimates},
we are able to prove the following global estimates:
\begin{cor}
\label{cor:Adams' ineq on conic mfds with Laplacian }Suppose that
$(M,g_{0},D,g_{D})$ is a conic 4-manifolds with $D=\sum_{i=1}^{k}p_{i}\beta_{i}$
and $\beta_{1}=\min\{\beta_{i}\}$. Let $g_{1}=g_{D}$ and $dV_{i}$
be the volume elements of $g_{i}$ , $i=0,1$. Let $\bar{u}=\fint_{M}udV_{1}$.
Then for any $u\in H^{2}(dV_{1})$
\[
\log\int_{M}\exp(4|u-\bar{u}|)dV_{1}\leq C+\frac{1}{8\pi^{2}(1+\beta_{1})}\|\Delta u\|^{2},
\]
 where $C=C(M)$ is a constant.
\end{cor}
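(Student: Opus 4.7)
The plan is to follow \cite{branson1992estimates} closely, replacing the classical Adams inequality by its singular analogue, Corollary \ref{cor:corollary for manifold adams}. The argument has two main stages: first, reduce the linear-exponential bound to a quadratic-exponential one via Young's inequality; then establish the quadratic bound by a Green's function localization around each singular point.

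\emph{Reduction via Young's inequality.} Without loss of generality, $\bar u := \fint_M u\,dV_1 = 0$. For any $c > 0$, one has $4|u| \leq c\, u^2/\|\Delta u\|^2 + 4\|\Delta u\|^2/c$. Choosing $c := 32\pi^2(1+\beta_1)$, the theorem follows from the quadratic estimate
\begin{equation*}
\int_M \exp\!\left(32\pi^2(1+\beta_1)\,u^2/\|\Delta u\|^2\right) dV_1 \;\leq\; K
\end{equation*}
for all $u \in H^2(dV_1)$ with mean zero, since taking the logarithm yields precisely the constant $4/c = 1/(8\pi^2(1+\beta_1))$.

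\emph{Green's function localization.} Normalize $\|\Delta_{g_0} u\|_{L^2(dV_0)} = 1$, which is equivalent to $\|\Delta u\| \asymp 1$ by Proposition \ref{prop:.invariance of H2}. Using the Green's function $G_0$ of $-\Delta_{g_0}$ on $(M,g_0)$, write
\begin{equation*}
u(x) = \bar u_0 - \int_M G_0(x,y)\,\Delta_{g_0} u(y)\,dV_0(y),
\end{equation*}
with $|\bar u_0|$ controlled by Poincar\'e. Choose $\epsilon$ less than the injectivity radius at each $p_i$ and pass to conformal normal coordinates as in \cite{lee1987yamabe} so that on each $B_i := B_\epsilon(p_i)$ we have $G_0(x,y) = \tfrac{1}{4\pi^2|x-y|^2} + R_i(x,y)$ with $R_i$ bounded. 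Decompose $u|_{B_i} = \bar u_0 + \mathcal{N}_i[\chi_{B_i}\Delta u] + u_i^{(R)} + u_i^{(\mathrm{out})}$, where $\mathcal{N}_i$ is the Euclidean Newton potential, and $u_i^{(R)}$, $u_i^{(\mathrm{out})}$ collect the contributions of $R_i$ on $B_i$ and of $\Delta u$ on $M\setminus B_i$, respectively. By standard interior elliptic regularity on $M\setminus\cup_j B_{\epsilon/2}(p_j)$, both $u_i^{(R)}$ and $u_i^{(\mathrm{out})}$ are uniformly bounded on $B_i$ by a multiple of $\|\Delta u\|_{L^2} \leq 1$, so they contribute only to the constant $K$.

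\emph{Assembly and main obstacle.} Applying Corollary \ref{cor:corollary for manifold adams} with $\beta = \beta_i$ to $\mathcal{N}_i[\chi_{B_i}\Delta u]$, and using $dV_1 \asymp |x-p_i|^{4\beta_i}\,dx$ on $B_i$ together with $\beta_i \geq \beta_1$, we obtain a uniform bound with exponential constant $32\pi^2(1+\beta_1)$. On $M\setminus\cup_i B_{\epsilon/2}(p_i)$, $dV_0 \asymp dV_1$, and the classical Adams inequality of \cite{branson1992estimates} applies directly with sharp constant $32\pi^2 > 32\pi^2(1+\beta_1)$. The main technical obstacle is that squaring the decomposition of $u$ creates cross-terms between $\mathcal{N}_i[\chi_{B_i}\Delta u]$ and the bounded remainders, while Corollary \ref{cor:corollary for manifold adams} requires an exact $L^2$-normalization of the Newton potential's source. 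Both issues are handled via $(a+b)^2 \leq (1+\delta)a^2 + (1+\delta^{-1})b^2$ for small $\delta>0$, combined with the slack $\|\chi_{B_i}\Delta u\|_{L^2} \leq 1$: the $(1+\delta)$-enlargement of the constant is compensated by dividing by $\|\chi_{B_i}\Delta u\|_{L^2}^2$ when invoking Corollary \ref{cor:corollary for manifold adams}, while the bounded cross-term contributions are absorbed into $K$. Preserving the sharp constant at the worst singular point $p_1$ is the crux of the argument.
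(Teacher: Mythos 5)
Your overall scaffolding (Green's function representation, reduction to the Euclidean Newton potential near each cone point, Corollary \ref{cor:corollary for manifold adams} there, classical Adams away from the $p_i$, Young's inequality to trade the linear exponent for a quadratic one) is the same as the paper's, but the order in which you apply Young's inequality creates a genuine gap. By applying Young globally first, you have committed yourself to proving the \emph{quadratic} estimate $\int_M\exp\bigl(32\pi^{2}(1+\beta_{1})u^{2}/\|\Delta u\|^{2}\bigr)dV_{1}\leq K$ for the full function $u$, at the sharp constant. The bounded remainders $\bar u_{0}$, $u_i^{(R)}$, $u_i^{(\mathrm{out})}$ then sit \emph{inside} the square, and your proposed cure $(a+b)^{2}\leq(1+\delta)a^{2}+(1+\delta^{-1})b^{2}$ does not close: it forces you to integrate $\exp\bigl(32\pi^{2}(1+\beta_{1})(1+\delta)\,\mathcal{N}_i^{2}\bigr)$, while Corollary \ref{cor:corollary for manifold adams} only controls the exponent $32\pi^{2}(1+\beta_{1})\,\mathcal{N}_i^{2}/\|\chi_{B_i}\Delta u\|_{2}^{2}$. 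The compensation you invoke (dividing by $\|\chi_{B_i}\Delta u\|_{2}^{2}\leq1$) only helps when $\|\chi_{B_i}\Delta u\|_{2}^{2}\leq(1+\delta)^{-1}$; in the concentration regime where essentially all of the $L^{2}$ mass of $\Delta u$ sits in $B_\epsilon(p_{1})$, there is no slack at all, the remainders $b$ do not shrink (they are of size $C\|\Delta u\|_{2}$, not of the size of the exterior mass), and at the sharp constant the integrability of $\exp(\mathrm{sharp}\cdot \mathcal{N}^{2})$ does not imply a uniform bound for $\exp(\mathrm{sharp}\cdot(\mathcal{N}+b)^{2})$. This concentration-at-$p_{1}$ case is exactly the critical configuration, so the "crux" you identify is left unproved; a genuine proof of the global quadratic inequality at the sharp constant would require the O'Neil/rearrangement machinery applied directly to the manifold kernel (Adams' original lemma tolerates such kernel perturbations inside its hypotheses), not a pointwise split.

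The paper avoids this entirely by keeping the linear exponent until after localization: it establishes the pointwise bound $|u-\bar u|\leq|u_{1}|+C\|\Delta u\|_{2}$ with $u_{1}$ the pure Newton potential, applies Corollary \ref{cor:corollary for manifold adams} to $u_{1}/\|\Delta u\|_{2}$, and only then uses Young on $4|u_{1}|$ with denominator $\|\Delta u\|_{2}^{2}$; the bounded remainder then exits the exponential as a multiplicative factor $e^{4C\|\Delta u\|_{2}}$, which is harmless (absorbable with an arbitrarily small loss that does not affect the subcritical application). Two smaller points: your claim that $u_i^{(\mathrm{out})}$ is uniformly bounded on all of $B_i=B_\epsilon(p_i)$ is not quite right for $x$ near $\partial B_i$ (the kernel is still singular there); you only need, and only have, the bound on $B_{\epsilon/2}(p_i)$, which is what the paper's cutoff decomposition $G=\frac{1}{4\pi^{2}}r^{-2}g(r)+H$ encodes. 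Also note the paper converts the $dV_{0}$-mean to the $dV_{1}$-mean via the auxiliary equation $\Delta\psi=\frac{1}{V_{1}}(\rho-V_{1}/V_{0})$ rather than your Poincar\'e argument; both are fine. To repair your write-up, restructure the final stage along the paper's lines rather than attempting the global sharp-constant quadratic inequality.
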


\begin{proof}
Since the Green's function $G(x,y)$ of $\Delta$ exists on smooth
manifold $(M,g_{0})$, we have 
\[
u(x)-\tilde{u}=\int_{M}\Delta u(y)G(x,y)dV_{0}(y),
\]
where $\tilde{u}=\frac{1}{V_{0}(M)}\int_{M}u(y)dV_{0}(y)$. Note 
\begin{equation}
G(x,y)=\frac{1}{4\pi^{2}}r^{-2}g(r)+H(x,y)\label{eq:Green function expansion}
\end{equation}
 for $H$ a bounded function on $M\times M$, $r=d(x,y)$ and $g(r)$
a function with support in $B_{r}$ with radius smaller than the injective
radius of $(M,g_{0})$. First, suppose that 
\[
f(y)=\Delta u(y),\|f\|_{2}\leq1.
\]
Let $dV_{1}=\rho(x)dV_{0}=e^{4\gamma(x)}dV_{0}$ be the volume element
for the singular metric. Consider the following PDE on $M$
\begin{equation}
\Delta\psi(x)=\frac{1}{V_{1}(M)}\left(\rho(x)-\frac{V_{1}(M)}{V_{0}(M)}\right).\label{eq:add2}
\end{equation}
(\ref{eq:add2}) has a weak solution $\psi(x)\in W^{2,p}$ for $1<p<-\frac{1}{\beta_{1}}$.
By the Sobolev embedding theorem , $\psi(x)\in L^{\frac{4p}{4-2p}}(dV_{0})\subset L^{2}(dV_{0})$.
Let $\bar{u}=\frac{1}{V_{1}}\int_{M}u(y)dV_{1}(y)$. Then we have
\begin{equation}
u-\bar{u}=\int_{M}f(y)(G(x,y)-\psi(y))dV_{0}(y).\label{eq:add3}
\end{equation}
By $\holder's$ inequality, we have 
\begin{equation}
\left|\int_{M}f(y)\psi(y)dV_{0}\right|\leq\|f\|_{2}\|\psi\|_{2}.\label{eq:cauchy ineq showing mean values are equiv}
\end{equation}
Combining (\ref{eq:The solution}), (\ref{eq:cauchy ineq showing mean values are equiv})
and (\ref{eq:Green function expansion}), we have
\[
|u(x)-\bar{u}|\leq\left|\frac{1}{4\pi^{2}}\int_{B_{\delta}(x)}f(y)r^{-2}dV_{0}(y)\right|+C\|f\|_{2},
\]
where $\delta$ is the injective radius and $C=C(g_{1},g_{0})=\|\psi\|_{2}$.
Pick a normal coordinates around $x$. The metric $g_{ij}(y)=\delta_{ij}+O(|y|^{2})$.
Then we see that 
\begin{align}
\left|\frac{1}{4\pi^{2}}\int_{B_{\delta}(x)}f(y)r^{-2}dV_{0}(y)\right| & =\left|\frac{1}{4\pi^{2}}\int_{B_{\delta}(x)}f(y)r^{-2}(1+O(r^{2}))dy\right|\label{eq:f local estimates; cor4.5}\\
 & \leq\left|\frac{1}{4\pi^{2}}\int_{B_{\delta}(x)}f(y)|x-y|^{-2}dy\right|+C\|f\|_{2}.\nonumber 
\end{align}
Note that we may assume that $f$ has compact support in $B_{\delta}(x)$,
because the integral over the rest part of the manifolds can be controlled
by the $L^{2}$ norm of $f$. Let $u_{1}(x)=\frac{1}{4\pi^{2}}\int_{\mathbb{R}^{4}}f(y)|x-y|^{-2}dy$.
Then
\begin{equation}
|u-\bar{u}|\leq|u_{1}|+C\|\Delta u\|_{2}.\label{eq:u-baru<u1}
\end{equation}
Suppose that $x=p_{i}$ with index $\beta_{i}$. By Corollary \ref{cor:corollary for manifold adams},
we have 
\begin{equation}
\int_{B_{\delta}(x)}\exp(32\pi^{2}\alpha_{i}(u_{1})^{2})|z|^{4\beta_{i}}dz\leq c_{0},\label{eq:connecting eq;cor 4.5}
\end{equation}
 where $\alpha_{i}=(1+\beta_{i})$. If $v$ is a non-constant function
on $M$, by mean value inequality and (\ref{eq:connecting eq;cor 4.5})
we have 
\begin{align}
\int_{B_{\delta}(x)}\exp\left(4|v(z)|\right)|z|^{4\beta_{i}}dz & \leq\int_{B_{\delta}(x)}\exp\left(\frac{32\pi^{2}\alpha_{i}v^{2}}{\|\Delta v\|_{2}^{2}}+\frac{1}{8\pi^{2}\alpha_{i}}\|\Delta v\|_{2}^{2}\right)|z|^{4\beta_{i}}dz\nonumber \\
 & \leq c_{0}\exp\left(\frac{\|\Delta v\|_{2}^{2}}{8\pi^{2}\alpha}\right).\label{eq:rescale v=00003Dv/|v|_2}
\end{align}
Let $\alpha=1+\beta_{1}=\min\{1+\beta_{i}\}$. Combining (\ref{eq:u-baru<u1})
and (\ref{eq:rescale v=00003Dv/|v|_2}), we obtain in $B_{\delta}$;
\begin{equation}
\int_{B_{\delta}(x)}\exp(4|u(z)-\bar{u}|)\rho(z)dV_{0}\leq C\exp\left(\frac{\|\Delta u\|^{2})}{8\pi^{2}\alpha}\right).\label{eq:nnection eq2; cor 4.5}
\end{equation}
On $M-\cup_{i}B_{\delta}(p_{i})$, using partition of unity, we can
assume that $u(z)-\bar{u}$ vanishes in $B_{\delta/2}(p_{i})$ . Then
we can apply Adams' inequality in the form of \cite{branson1992estimates}
which gives 
\begin{align}
\int_{M-\cup B_{\delta/2}(p_{i})}\exp(4|u(z)-\bar{u}|)\rho(z)dV_{0} & \leq c_{0}\exp\left(\frac{\|\Delta u\|^{2})}{8\pi^{2}}\right)\label{eq:Standard BCY estimate; cor 4.5}\\
 & \leq c_{0}\exp\left(\frac{\|\Delta u\|^{2})}{8\pi^{2}\alpha}\right).\nonumber 
\end{align}
Thus, we combine (\ref{eq:connecting eq;cor 4.5}) and (\ref{eq:Standard BCY estimate; cor 4.5})
to get :
\[
\int_{M}\exp(4|u(z)-\bar{u}|)dV_{1}\leq C\exp\left(\frac{\|\Delta u\|^{2})}{8\pi^{2}\alpha}\right).
\]
This concludes the proof. 
\end{proof}
If the Paneitz operator $P$ is nonnegative with $Ker(P)=\{constants\}$,
we may define the pseudo differential operator $\sqrt{P}$ and the
Green's function of $\sqrt{P}$ has the same leading term as $-\Delta$.
See Lemma 1.6 in \cite{chang1995extremal} for details. Then we can
follow the proof in Corollary \ref{cor:Adams' ineq on conic mfds with Laplacian }
to derive a lower bound of ${\rm II}$ on conic manifolds:
\begin{thm}
\label{thm:Adams ineq on conic 4 mfd with paneitz}Suppose that $(M,g_{0},D,g_{D})$
is a conic 4-manifolds with $D=\sum_{i=1}^{k}p_{i}\beta_{i}$ and
$\beta_{1}=\min\{\beta_{i}\}$. Let $g_{1}=g_{D}$ and $dV_{i}$ be
the volume elements of $g_{i}$ , $i=0,1$. Let $\bar{u}=\fint_{M}u(y)dV_{1}(y)$.
Let $P$ be the Paneitz operator of $g_{0}$ on $M$. Suppose that
$P$ is nonnegative and $Ker(P)=\{constants\}$. Then for any $u\in H^{2}(dV_{1})$
\[
\log\int_{M}\exp(4|u(x)-\bar{u}|)dV_{1}(x)\leq C+\frac{1}{8\pi^{2}(1+\beta_{1})}\int_{M}u(x)Pu(x)dV_{0}(x),
\]
 where $C=C(\beta_{1},M)$ is a constant. 
\end{thm}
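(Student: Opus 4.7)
The plan is to adapt the argument of Corollary 4.5 almost verbatim, replacing the operator $-\Delta$ by the pseudodifferential operator $\sqrt{P}$. Because $P$ is nonnegative on $(M,g_0)$ with kernel consisting only of the constants, standard spectral theory produces the self-adjoint square root $\sqrt{P}$ on the orthogonal complement of the constants, and we have the identity
\[
\int_M u\, Pu\, dV_0 = \|\sqrt{P}u\|_{L^2(dV_0)}^2.
\]
The key analytic input, recorded as Lemma 1.6 of Chang-Yang \cite{chang1995extremal}, is that the Green's function $G_P(x,y)$ of $\sqrt{P}$ (acting on the mean-zero subspace with respect to $dV_0$) has exactly the same leading singularity as the Green's function of $-\Delta$:
\[
G_P(x,y) = \frac{1}{4\pi^2}\, r^{-2} g(r) + H(x,y),
\]
with $H$ bounded on $M\times M$ and $g$ a smooth cutoff supported inside the injectivity radius. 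This is what makes the substitution possible.

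With this in hand, I would first write the representation formula
\[
u(x) - \tilde{u} = \int_M \sqrt{P}u(y)\, G_P(x,y)\, dV_0(y),
\]
where $\tilde{u}$ is the $dV_0$-average of $u$, and then correct from $\tilde{u}$ to the $dV_1$-average $\bar{u}$ exactly as in Corollary 4.5: one solves $\Delta \psi = V_1(M)^{-1}(\rho - V_1(M)/V_0(M))$ with $\rho = e^{4\gamma}$, obtaining a solution in $W^{2,p}$ for $1<p<-1/\beta_1$ and hence in $L^2(dV_0)$ by Sobolev embedding; the difference $\bar{u} - \tilde{u}$ is then controlled by $\|\psi\|_2\,\|\sqrt{P}u\|_2$ via Cauchy-Schwarz, which contributes only to the constant $C$.

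Next, after normalizing so that $\|\sqrt{P}u\|_2 \le 1$, I would split the integral on the left into a sum of contributions from small geodesic balls $B_\delta(p_i)$ around each conic point and from the complement. Near $p_i$, writing out $u(x)-\bar u$ in normal coordinates and using the expansion of $G_P$ above, the leading contribution reduces to a Newton-potential integral $u_1(x) = \frac{1}{4\pi^2}\int f(y)|x-y|^{-2}\,dy$ with $f = \sqrt{P}u$, and Corollary 4.4 yields
\[
\int_{B_\delta(p_i)} \exp\!\bigl(32\pi^2(1+\beta_i)\, u_1^2\bigr)|z|^{4\beta_i}\,dz \le c_0.
\]
A mean-value inequality $4|u_1| \le 32\pi^2(1+\beta_i) u_1^2/\|\sqrt{P}u\|_2^2 + \|\sqrt{P}u\|_2^2/(8\pi^2(1+\beta_i))$ (after rescaling to unit norm) then gives the local bound with exponent $1/(8\pi^2(1+\beta_i))$. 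On the remaining smooth region $M\setminus\bigcup B_{\delta/2}(p_i)$, one applies the usual Adams inequality of Branson-Chang-Yang \cite{branson1992estimates}, which gives the better constant $1/(8\pi^2)$. Taking $\alpha = 1+\beta_1 = \min_i(1+\beta_i)$ produces a uniform exponent and finishes the estimate.

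The main technical step is just verifying that Lemma 1.6 of Chang-Yang applies in our setting and that the correction $\psi$ adjusting from the $dV_0$-mean to the $dV_1$-mean really does sit in $L^2(dV_0)$; both follow because $-1<\beta_i<0$ allows the range $1<p<-1/\beta_1$ to be nonempty and the Sobolev embedding $W^{2,p}\hookrightarrow L^{4p/(4-2p)}$ covers $L^2$. The rest is a routine transcription of Corollary 4.5 with $\sqrt{P}$ in place of $-\Delta$.
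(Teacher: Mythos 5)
Your proposal is correct and follows essentially the same route as the paper: the paper's own proof of this theorem is precisely the remark that one defines $\sqrt{P}$ (using $P\geq0$, ${\rm Ker}\,P=\{constants\}$), invokes Lemma 1.6 of Chang--Yang for the fact that the Green's function of $\sqrt{P}$ has the same leading singularity $\frac{1}{4\pi^{2}}r^{-2}$ as that of $-\Delta$, and then repeats the proof of Corollary \ref{cor:Adams' ineq on conic mfds with Laplacian } verbatim. Your write-up simply makes explicit the steps (mean-value correction via $\psi$, localization near the $p_{i}$ with Corollary \ref{cor:corollary for manifold adams}, standard Adams inequality away from the singular points, and $\alpha=1+\beta_{1}$) that the paper leaves to the reader.
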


\section{Proof of Theorem \ref{thm:Subcritical}}

The equation of constant $Q$-curvature (\ref{eq:Constant Q equation})
is the Euler-Lagrange equation of the $\mathrm{II}$ functional, which
is studied in \cite{branson1992estimates} and \cite{chang1995extremal}
for smooth metrics. Recall that
\[
\mathrm{II}_{g}(u)=\langle P_{g}u,u\rangle_{g}+2\int_{M}QudV_{g}-\frac{k_{g}}{2}\log\fint_{M}\exp(4u)dV_{g},
\]
where $k_{g}=\int_{M}QdV_{g}$ and $\langle P_{g}u,u\rangle_{g}=\int_{M}uP_{g}udV_{g}$.
By a simple integration by part trick, $\langle P_{g}u,u\rangle_{g}$
may be well defined for $u\in H^{2}(dV_{g})$. It is obvious that
$\langle P_{g}u,u\rangle_{g}$ is conformally invariant. From now
on, we simply write $\langle Pu,u\rangle$ when no confusion arises. 

Let $(M^{4},g_{0},D,g_{D})$ be a conic manifold with $g_{D}(x)=e^{2\gamma(x)}g_{0}(x)$.
Let $g_{1}=g_{D}$ and $dV_{i}$ be the volume elements of $g_{i},\ i=0,1$.
The key estimate of \cite{branson1992estimates,chang1995extremal}
is to employ Adams' inequality \cite{adams1988sharp} to derive a
low bound of the ${\rm II}$ functional. By Adams' inequality and
its modified form\cite{branson1992estimates,chang1995extremal}, if
$u\in H^{2}(dV_{0})$ and $\langle Pu,u\rangle\leq1$, we have that
\begin{equation}
\int_{M}\exp\left(32\pi^{2}|u(x)-\tilde{u}|^{2}\right)dV_{0}(x)\leq c_{0}V_{0}(M),\label{eq:chang-yang Adams' ineq on mfd}
\end{equation}
where the mean value $\tilde{u}=\fint_{M}udV_{0}$. In conic 4-manifolds,
we use the modified Adams' inequality \ref{thm:Adams ineq on conic 4 mfd with paneitz}
to obtain the estimate for ${\rm II}$ functional. 
\begin{proof}
[Proof of Theorem \ref{thm:Subcritical}]By Theorem \ref{thm:Adams ineq on conic 4 mfd with paneitz}
\begin{equation}
\log\left(\int_{M}\exp(4|u-\bar{u}|)dV_{1}\right)\leq\frac{1}{8\pi^{2}\alpha}\langle Pu,u\rangle+C(\alpha,M),\label{eq:II estimate, them1 proof}
\end{equation}
where $\alpha=(1+\beta_{1})$ and $\bar{u}=\fint_{M}udV_{1}$. Mean
value inequality implies:
\begin{align}
\int_{M}Q_{1}udV_{1} & =\int_{M}Q_{1}(u-\bar{u})dV_{1}+\bar{u}k_{g_{1}}\label{eq:connecting eq; thm1 proof}\\
 & \leq\frac{1}{4\epsilon}\int_{M}Q_{1}^{2}dV_{1}+\epsilon\int_{M}\left(u-\bar{u}\right)^{2}dV_{1}+\bar{u}k_{g_{1}}.\nonumber 
\end{align}
Here $k_{g_{1}}=k_{g_{0}}+8\pi^{2}\sum_{i}\beta_{i}$ by Proposition
\ref{prop:(Gauss-Bonnet-Chern)-Suppose-tha}. Notice that 
\begin{equation}
\|u-\bar{u}\|_{L^{2}(dV_{1})}\leq\|u-\bar{u}\|_{H^{2}}\leq\langle Pu,u\rangle,\label{eq:connection eq2; thm1 proof}
\end{equation}
by Propositions \ref{prop:(Sobolev's-embedding)-There} and \ref{prop:(Poincar=0000E9's-inequality)-If}.
If $\bar{u}=0$, then (\ref{eq:II estimate, them1 proof}),(\ref{eq:connecting eq; thm1 proof})
and (\ref{eq:connection eq2; thm1 proof}) give us the desired estimate:
\begin{align}
\langle Pu,u\rangle+2\int_{M}Q_{1}udV_{1} & -\frac{k_{g_{1}}}{2}\log\left(\fint_{M}\exp(4u)dV_{1}\right)\nonumber \\
 & \geq\left(1-\frac{k_{g_{1}}}{16\pi^{2}\alpha}-\epsilon\right)\langle Pu,u\rangle+C(\alpha,\epsilon).\label{eq:Estimate of II in subcritical case}
\end{align}
 We have used Lemma \ref{cor:-for-.lemforregularity Q_rho} for the
integrability of $Q_{1}^{2}$ in (\ref{eq:Estimate of II in subcritical case}).
If $k_{g_{1}}<16\pi^{2}\alpha$ , we can always choose an $\epsilon$
small enough such that $\frac{k_{g_{1}}}{16\pi^{2}\alpha}+\epsilon<1$.
Then (\ref{eq:Estimate of II in subcritical case}) shows 
\begin{equation}
{\rm II}(u)\geq C'\langle Pu,u\rangle+C\geq C''.\label{eq:Estimate}
\end{equation}

Let 
\[
\Lambda=\inf\{{\rm II}(u):u\in H^{2}(dV_{1}),\int_{M}udV_{1}=0\}.
\]
Take a minimizing sequence of $\mathrm{II}$, namely $\{u_{i}\}_{i=1}^{\infty}$
such that $\mathrm{II}(u_{i})\to\Lambda$ as $i\to\infty$ and $\int_{M}u_{i}dV_{1}=0$
. By (\ref{eq:Estimate}) we see that $\|\Delta u_{i}\|_{2}^{2}$
is bounded. Hence, $u_{i}$ is bounded in $H^{2}(dV_{1})$ by $\poincare's$
inequality Proposition \ref{prop:(Poincar=0000E9's-inequality)-If}.
Replaced by a subsequence, we may assume that $u_{i}$ converges weakly
to some $w$ in $H^{2}(dV_{1})$ and strongly to the same $w$ in
$L^{2}(dV_{0})$ by the compactness of the embedding, cf Proposition
\ref{prop:(compact-embedding)-The}. Then, we claim that $w$ achieves
the infimum.

Claim: ${\rm II}(w)=\Lambda$.
\begin{proof}
[Proof of the claim]Since $u_{i}$ converges to $w$ weakly in $H^{2}(dV_{1})$,
we see that 
\[
\langle Pw,w\rangle\leq\liminf\langle Pu_{i},u_{i}\rangle
\]
 and 
\[
\int_{M}Q_{1}wdV_{1}=\lim_{i\to\infty}\int_{M}Q_{1}u_{i}dV_{1}.
\]
In order to control the last term of $\mathrm{II}$, we note that
\begin{align*}
|\exp(4u_{i})-\exp(4w)| & =|\int_{u_{i}}^{w}4\exp(4s)ds|\\
 & \leq4\exp(4|w|+4|u_{i}|)|w-u_{i}|,
\end{align*}
which leads to
\begin{align}
\int_{M}|\exp(4u_{i})-\exp(4w)|dV_{1} & \leq4\int_{M}\exp(4|w|+4|u_{i}|)|w-u_{i}|dV_{1}\label{eq:conncecting equation; proof of the claim; sec 5}\\
\quad & \leq4\left(\int_{M}\exp(4|4w|)\right)^{\frac{1}{4}}\left(\int_{M}\exp(4|4u_{i}|)\right)^{\frac{1}{4}}\nonumber \\
 & \quad\quad\cdot\left(\int_{M}|w-u_{i}|^{2}\right)^{\frac{1}{2}}.\nonumber 
\end{align}
By Adams' Inequality, $\left(\int_{M}\exp(4|4w|)\right)^{\frac{1}{4}}$
and $\left(\int_{M}\exp(4|4u_{i}|)\right)^{\frac{1}{4}}$ are bounded.
Then (\ref{eq:conncecting equation; proof of the claim; sec 5}) shows
that $\int_{M}|\exp(4u_{i})-\exp(4w)|dV_{1}\to0$ as $i\to\infty$.
Hence 
\[
\lim_{i\to\infty}\log(\fint_{M}\exp(4u_{i})dV_{1})=\log(\fint_{M}\exp(4w)dV_{1}).
\]
Therefore, we have that ${\rm II}(w)\leq\liminf{\rm II}(u_{i})=\Lambda$.
By the definition of $\Lambda$ , we have $\mathrm{II}(w)=\Lambda$.
\end{proof}
\end{proof}
We have established the existence of a minimizer $w$ of the functional
${\rm II}$. $w$ satisfies the corresponding Euler-Lagrange equation:
\begin{equation}
P_{g_{1}}w+Q_{1}=\frac{k_{g_{1}}e^{4w}}{2\fint_{M}e^{4w}dV_{1}}.\label{eq:add4}
\end{equation}
 Note that (\ref{eq:add4}) is equivalent to $Q_{w}=c$ where $Q_{w}$
is the $Q$-curvature of metric $g_{w}=e^{2w}g_{1}.$ Therefore, $w$
is a weak solution of 
\begin{equation}
P_{g_{1}}w+Q_{1}=c\cdot e^{4w}.\label{eq:Weak solution of minimizer}
\end{equation}

We next prove the regularity of the solution in \ref{eq:Weak solution of minimizer}.
Suppose we have a $H^{2}$ solution $w$ of $P_{g_{1}}w+Q_{1}=c\cdot e^{4w}$.
By a renormalization of the volume, we may assume that $c=\pm1$ or
$0$. We assume that $c=1$ and $w$ is a weak solution of $P_{g_{1}}w+Q_{1}=e^{4w}$.
The proofs for other cases are similar. Thus, for any $v\in H^{2}(dV_{1})$,
$w$ satisfies:
\begin{align*}
0 & =\int_{M}vP_{g_{1}}wdV_{1}+\int_{M}(Q_{1}-e^{4w})vdV_{1}\\
 & =\int_{M}vP_{g_{0}}w\rho dV_{0}+\int_{M}(Q_{1}-e^{4w})v\rho dV_{0},
\end{align*}
where $\rho(x)=\frac{dV_{1}}{dV_{0}}=e^{4\gamma(x)}$. Thus, $w$
is a weak solution of 
\begin{equation}
P_{g_{0}}w=e^{4w}\rho-Q_{1}\rho\label{eq:equationForWeakSolution}
\end{equation}
in $H^{2}(dV_{0})$. Let 
\[
h(x)=e^{4w(x)}\rho(x)-Q_{1}(x)\rho(x)-\mathrm{div}(A_{g_{0}}dw).
\]
By Corollary \ref{cor:-for-.lemforregularity Q_rho}, $Q_{1}\rho\in L^{p}(dV_{0})$
for $1<p<2$, $\rho\in L^{q_{1}}(dV_{0})$ for $1<q_{1}<\frac{1}{1-\alpha}$
and by Adams' inequality, $e^{4w}\in L^{p}(dV_{0})$ for any $p>1$.
This implies that $e^{4w}\rho\in L^{q_{1}}(dV_{0})$ for $1<q_{1}<\frac{1}{1-\alpha}$.
Thus $h\in L^{q}(dV_{0})$ for some $1<q<\min\{2,\frac{1}{1-\alpha}\}$.
Let $z(x)=\Delta w$. Then $\Delta z=h(x)$ in weak sense, i.e. 
\[
\int_{M}z\Delta vdV_{0}=\int_{M}hvdV_{0},
\]
for any $v(x)\in H^{2}(dV_{0})$. Now let $\Gamma(x,y)$ be the Green's
function for $\Delta$. Let 
\[
H(x)=\int_{M}h(y)\Gamma(x,y)dV_{0}.
\]
The regularity theory of elliptic equations \cite{gilbarg2015elliptic}
shows that $H(x)\in W^{2,q}(dV_{0})$ and $\Delta H(x)=h(x)$ $a.e.$.
Therefore, 
\[
z(x)=H(x)+\bar{z}\quad a.e.,
\]
where $\bar{z}=\int_{M}zdV_{0}$. We apply the regularity theory of
elliptic equations again to obtain $w\in W^{4,q}(dV_{0})$ . Since
$W^{4,q}(dV_{0})$ is the regular Sobolev space and $4q>4$, so we
embed the solution $w$ into $\holder$ spaces by Morrey's embeeding
theorem:
\begin{equation}
w\in\begin{aligned}C^{\tau},\  & \tau<\min\{2,4(1+\beta)\}.\end{aligned}
\label{eq:regularity claim}
\end{equation}

For any $x_{0}\in M$ and $x_{0}\not=p_{i},i=1,2,...,k$, there exists
a small neighborhood $B_{2\epsilon}(x_{0})$ of $x_{0}$ such that
$p_{i}\not\in B_{2\epsilon}(x_{0})$. By \cite{chang1995extremal},\cite{malchiodi2006compactness},
the Green's function $G(x,y)$ of $P_{g_{0}}$ with respect to $g_{0}$
exists and is smooth on $M\times M\backslash\{(x,x)\}$. Furthermore,
$G(x,y)$ and its derivatives have the following asymptotic properties\cite{malchiodi2006compactness}:
\begin{align}
|G(x,y)-\frac{1}{8\pi^{2}}\log\frac{1}{|x-y|}| & \leq C,\ x\not=y,\label{eq:Green's function expansion, sec 5}\\
|\nabla^{i}G(x,y)|\leq C_{i}\frac{1}{|x-y|^{i}}, & \ i=1,2,3.\nonumber 
\end{align}
Here $C,C_{i},i=1,2,3$ are some constants depend on $(M,g_{0})$.
The Green's function of $P_{g_{0}}$ gives the representation of $w$,
\[
w(x)-\bar{w}=\int_{M}G(x,y)P_{g_{0}}w(y)dV_{0}(y).
\]
Let $f(x)=e^{4w(x)}\rho(x)-Q_{1}(x)\rho(x)$. $f(x)$ is clearly integrable
and bounded in $B_{\epsilon}(x_{0})$, which implies that $w(x)\in C^{3}(B_{\epsilon}(x_{0}))$.
Since $h(x)\in C^{1}(M\backslash\{p_{i}\})$, we can apply the regularity
theory to $\Delta z(x)=h(x)$ to show that $z(x)\in C_{loc}^{2,\tau}(M\backslash\{p_{i}\})$,
which implies $w\in C_{loc}^{4,\tau}(M\backslash\{p_{i}\})$. The
standard bootstrapping technique then yields $w\in C^{\infty}(M\backslash\{p_{i}\})$. 
\begin{rem}
From the regularity argument, we can see the number $2$ in (\ref{eq:regularity claim})
is introduced by the $L^{2-\epsilon}$ integrability of $Q_{1}\rho$.
This term disappears when the original metric is conformally flat.
In other words, the solution is in $C^{\tau}(M)$ for any $\tau<4(1+\beta_{1})$
if the metric is conformally flat.
\end{rem}

\section{Radial Symmetric Solutions}

In this section, we consider radial symmetric solutions on conic 4
spheres with standard background metric. Let $x_{S},x_{N}$ be the
two antipodes on $S^{4}$ and $D=\beta_{0}x_{S}+\beta_{1}x_{N}$.
Let $\eta:\mathbb{R}^{4}\to S^{4}$ be the inverse of the stereographic
projection from north pole. Then 
\[
\eta^{*}(g_{0})=e^{2z(x)}\ ds^{2}
\]
where $g_{0}$ is the standard metric on sphere, $ds^{2}$ is the
Euclidean metric, and
\[
z(x)=\log\frac{2}{|x|^{2}+1}.
\]
Note $\eta^{-1}$ maps $x_{N}$ to infinity and $x_{S}$ to $0$.
Let $T=S^{3}\times\mathbb{R}$ be a cylinder. Suppose
\[
G:S^{3}\times\mathbb{R}\to\mathbb{R}^{4},G(w,t)=e^{t}w.
\]
The composition map $\eta\circ G:T\to S^{4}$ gives the standard cylindrical
coordinate. The Paneitz operator with product metric $g_{T}$ is the
following: 
\[
P_{T}=(\partial_{t}^{2}+\Delta_{S^{3}})^{2}-4\partial_{t}^{2}.
\]
A radial symmetric function on $\mathbb{R}^{4}$ depends only on $t$.
The constant $Q$-curvature equation is
\begin{equation}
v''''(t)-4v''(t)=c\cdot e^{4v}.\label{eq:Radial symmsetry eq; sec 6}
\end{equation}
We only consider the positive $Q$-curvature since $k_{g}=8\pi^{2}(2+\beta_{0}+\beta_{1})>0$.
By adding a constant, we may normalize (\ref{eq:Radial symmsetry eq; sec 6})
to the following
\begin{equation}
v''''(t)-4v''(t)=e^{4v(t)}.\label{eq:ODE to solve}
\end{equation}

If a symmetric conic metric has $D=\beta_{0}x_{S}+\beta_{1}x_{N}$,
the corresponding $v(t)$ must have linear growth at $\pm\infty$.
Thus, we have the following boundary conditions at $\pm\infty$: 
\begin{equation}
\lim_{t\to-\infty}v'(t)=1+\beta_{0},\lim_{t\to+\infty}v'(t)=-\beta_{1}-1.\label{eq: linear growth at infinity}
\end{equation}
We classify all solutions of (\ref{eq:ODE to solve}) satisfying (\ref{eq: linear growth at infinity}).

Define $x_{1}(t)=v'(t),\ x_{2}(t)=x_{1}'(t),\ x_{3}(t)=x_{2}'(t)-4x_{1}(t)$
and $x_{4}(t)=x_{3}'(t)$. From (\ref{eq:ODE to solve}), we get the
following system
\begin{equation}
\left\{ \begin{array}{cccccc}
x_{1}' & = &  & x_{2}\\
x_{2}' & = & 4x_{1} &  & +x_{3}\\
x_{3}' & = &  &  &  & x_{4}\\
x_{4}' & = &  &  &  & 4x_{1}x_{4}
\end{array}\right.\label{eq:system}
\end{equation}
Note that the positivity of $Q$-curvature implies that $x_{4}>0$
for all $t$. The corresponding solution for standard 4-sphere (differing
by a renormalization) in (\ref{eq:ODE to solve}) is given by

\begin{equation}
v(t)=-\log\cosh t+\frac{1}{4}\log6.\label{eq:standard sphere, radial}
\end{equation}
It corresponds to $X(t)=(x_{1},x_{2},x_{3},x_{4})^{T}$ with
\begin{equation}
X(t)=(-\tanh t,-\text{sech}^{2}t,2\tanh t\left(\text{sech}^{2}t+2\right),6\text{sech}^{4}t)^{T}.\label{eq:system for standard sphere, sec 6}
\end{equation}

First, we establish the first integral of (\ref{eq:system}).
\begin{prop}
We have the following first integral
\begin{align}
2x_{2}^{2}-8x_{1}^{2}-4x_{1}x_{3}+x_{4} & =c,\label{eq:first intergral 1}
\end{align}
or equivalently,
\begin{equation}
2x_{2}^{2}+\frac{1}{2}x_{3}^{2}-\frac{1}{2}(x'_{2})^{2}+x_{4}=c.\label{eq:first integral 2}
\end{equation}
where $c$ is a constant.
\end{prop}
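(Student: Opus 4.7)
The plan is to show directly that the expression $2x_2^2 - 8x_1^2 - 4x_1 x_3 + x_4$ is constant along solutions of the system (\ref{eq:system}). A natural way to discover this conserved quantity is to multiply the scalar ODE $v''''-4v'' = e^{4v}$ by $v'$ and integrate: using $\int v'''' v' \, dt = v''' v' - \frac{1}{2}(v'')^2 + C$, $\int -4v''v'\,dt = -2(v')^2 + C$, and $\int e^{4v} v'\, dt = \frac{1}{4} e^{4v} + C$, I obtain
$$v''' v' - \frac{1}{2}(v'')^2 - 2(v')^2 - \frac{1}{4} e^{4v} = \text{const}.$$
Multiplying by $-4$ and substituting $v' = x_1$, $v'' = x_2$, $v''' = x_2' = 4x_1 + x_3$, and $e^{4v} = x_4$ recovers exactly formula (\ref{eq:first intergral 1}).

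For a clean verification, I would simply differentiate the left-hand side of (\ref{eq:first intergral 1}) and apply the equations in (\ref{eq:system}):
$$\frac{d}{dt}\bigl(2x_2^2 - 8x_1^2 - 4x_1 x_3 + x_4\bigr) = 4x_2(4x_1 + x_3) - 16x_1 x_2 - 4x_2 x_3 - 4x_1 x_4 + 4x_1 x_4 = 0,$$
confirming that the quantity is a first integral. To pass to the equivalent form (\ref{eq:first integral 2}), I use $x_2' = 4x_1 + x_3$ to compute $(x_2')^2 = 16x_1^2 + 8x_1 x_3 + x_3^2$, so that $\frac{1}{2} x_3^2 - \frac{1}{2}(x_2')^2 = -8x_1^2 - 4x_1 x_3$. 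Substituting this into (\ref{eq:first integral 2}) reproduces (\ref{eq:first intergral 1}), establishing the equivalence.

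There is no serious technical obstacle here: the statement is essentially a bookkeeping identity that encodes the conservation law of the underlying variational structure of the fourth-order ODE. The value of the lemma lies in its later use, where this ``energy'' will allow one to reduce the dimension of the phase space of (\ref{eq:system}), to relate the asymptotic slopes at $t \to \pm \infty$ required by (\ref{eq: linear growth at infinity}), and ultimately to classify the radial solutions appearing in Theorem \ref{Them:ODE result}.
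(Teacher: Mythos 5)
Your proof is correct and follows essentially the same route as the paper: multiplying the ODE by $v'$ and integrating by parts to obtain (\ref{eq:first intergral 1}), then using $4x_{1}=x_{2}'-x_{3}$ to pass to (\ref{eq:first integral 2}); the direct differentiation check is a harmless extra verification. All computations check out.
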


\begin{proof}
Two formulae are equivalent. Multiply $v'(t)$ on both sides of (\ref{eq:ODE to solve})
and integrate by parts, we get (\ref{eq:first intergral 1}). To get
(\ref{eq:first integral 2}), simply apply $4x_{1}(t)=x_{2}'(t)-x_{3}(t)$
to (\ref{eq:first intergral 1}). 
\end{proof}
\begin{rem}
It is clear that all fixed points of (\ref{eq:system}) lie on a line
$(a,0,-4a,0)$ for $a\in\mathbb{R}$. The first integral (\ref{eq:first intergral 1})
hence indicates that if (\ref{eq:system}) has a bounded solution
then 
\[
\lim_{t\to\infty}|x_{1}(t)|^{2}=\lim_{t\to-\infty}|x_{1}(t)|^{2}=a^{2}.
\]
In other words, $\beta_{0}=\beta_{1}=|a|-1$ is a necessary condition
for the existence of a solution for (\ref{eq:ODE to solve}) with
(\ref{eq: linear growth at infinity}). This is also a special case
of the general Pohazaev idenity.
\end{rem}

Since (\ref{eq:system}) is invariant under the transformation $t\to t+c$,
we need to fix the gauge. First, we consider a special case with the
following initial data: 
\begin{equation}
x_{1}(0)=x_{3}(0)=0,x_{2}(0)=p,\ x_{4}(0)=q.\label{eq:initial condition sec 6}
\end{equation}
Here $p<0$ and $q>0$. Such solution is symmetric with respect to
$t=0$, i.e. $x_{1}(t)$ and $x_{3}(t)$ are odd functions while $x_{2}(t)$
and $x_{4}(t)$ are even functions. The constant $c$ in (\ref{eq:first intergral 1})
and (\ref{eq:first integral 2}) is given by $c=2p^{2}+q$. Note that
the standard solution of 4-sphere in (\ref{eq:standard sphere, radial})
has $p=-1$ and $q=6$.

For a fix $p<0$, we define 
\[
\mathcal{Q}=\{q>0:\ \forall t>0,\ x_{2}(t)<0\ with\ x_{2}(0)=p,\ x_{4}(0)=q\}.
\]
We show that $q=\sup\mathcal{Q}$ will give the precise initial data
in (\ref{eq:initial condition sec 6}) so that the corresponding solution
is desired. We first state some lemmas to show that $\mathcal{Q}$
is connected, nonempty, and bounded from above. 
\begin{lem}
(Monotonicity Lemma)\label{lem:ODE (Monotonicity-lemma)} If $x_{i}(t)$
and $y_{i}(t)$ are two solutions for the system (\ref{eq:system})
and $x_{i}(0)\geq y_{i}(0)$,$i=1,2,3,4$, then $x_{i}(t)\geq y_{i}(t)$
for all $t>0$. The equality holds if and only if $x_{i}(0)=y_{i}(0)$,$i=1,2,3,4$.
\end{lem}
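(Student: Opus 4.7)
The plan is to reduce the comparison to the standard invariance property of the non-negative orthant under a linear ODE with a Metzler coefficient matrix. Let $z_i(t) = x_i(t) - y_i(t)$ for $i = 1,\dots,4$. Subtracting the two copies of the system \eqref{eq:system}, the first three equations are already linear, and for the fourth I would use the elementary identity
\[
x_1 x_4 - y_1 y_4 \;=\; y_4\,(x_1 - y_1) \;+\; x_1\,(x_4 - y_4) \;=\; y_4\, z_1 + x_1\, z_4 .
\]
This yields the linear non-autonomous system $z'(t) = A(t)\, z(t)$ with
\[
A(t) \;=\; \begin{pmatrix} 0 & 1 & 0 & 0 \\ 4 & 0 & 1 & 0 \\ 0 & 0 & 0 & 1 \\ 4\,y_4(t) & 0 & 0 & 4\,x_1(t) \end{pmatrix},
\qquad z(0) \geq 0 .
\]
The crucial observation is that every off-diagonal entry of $A(t)$ is non-negative: the only entry that could be problematic is $A_{41}(t) = 4\,y_4(t)$, and this is non-negative because $y_4(t) = e^{4v(t)} > 0$ by the defining system \eqref{eq:system}. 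Thus $A(t)$ is a Metzler matrix, while the (possibly negative) term $4\,x_1(t)$ sits on the diagonal where it causes no trouble.

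Next I would invoke the standard fact that Metzler systems preserve the non-negative orthant forward in time. The self-contained way is to set $w(t) = e^{-\lambda t} z(t)$ for $\lambda$ large enough that $\lambda I + A(t)$ has all entries non-negative on any compact $t$-interval; then $w$ satisfies $w' = (A(t) - \lambda I)w + \lambda w$, or equivalently $w(t) = w(0) + \int_0^t (A(s) + (-\lambda + \lambda)I)\,w(s)\,ds$ in a form amenable to Picard iteration with a non-negative kernel, showing $w(t) \geq 0$ componentwise and hence $z(t) \geq 0$. A shorter alternative is a first-touch argument: if $t_0 > 0$ is the infimum of times where some component $z_i$ vanishes while all others remain $\geq 0$, then $z_i(t_0) = 0$ and $z_i'(t_0) = \sum_j A_{ij}(t_0) z_j(t_0) \geq 0$ since all off-diagonal $A_{ij}(t_0) \geq 0$ multiply non-negative $z_j(t_0)$. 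A short convexity/perturbation step (replace $z$ by $z + \varepsilon(1,1,1,1)$ and send $\varepsilon \to 0$) handles the boundary subtlety of staying non-negative past $t_0$.

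For the equality statement, if $x_i(0) = y_i(0)$ for every $i$, standard uniqueness for the smooth ODE \eqref{eq:system} forces $x \equiv y$. Conversely, if $x_i(0) \ne y_i(0)$ for some $i$, I would argue that $z(t) \ne 0$ for all $t > 0$: the same Metzler invariance applied backward to $-z$ would force $z(0) = 0$, a contradiction. Concretely, apply uniqueness: two solutions of \eqref{eq:system} that agree at any single point $t_0$ agree everywhere, so the existence of some $t_0 > 0$ with $x(t_0) = y(t_0)$ would propagate back to $t = 0$.

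\textbf{Main obstacle.} The routine part is writing down the variational system; the only genuine subtlety is justifying strict forward invariance when a single component $z_i$ momentarily touches zero while others are positive — this requires the Metzler structure to be used carefully to show $z_i$ cannot strictly decrease, and it is the step where positivity of $x_4 = e^{4v}$ (i.e.\ the positive $Q$-curvature assumption that makes $A_{41} \geq 0$) is indispensable.
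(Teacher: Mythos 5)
Your reduction to the linear variational system $z=x-y$, $z'=A(t)z$ with the decomposition $x_1x_4-y_1y_4=y_4z_1+x_1z_4$, and the observation that $A(t)$ is Metzler precisely because $x_4=e^{4v}>0$ (so $y_4>0$ along the flow, since $y_4'=4y_1y_4$ preserves the sign of $y_4(0)$), is a legitimate and genuinely different route to the weak inequality $z(t)\geq0$: the paper never linearizes, but instead runs a first-touch argument directly on the nonlinear system, using the cascade structure $z_1'=z_2$, $z_2'=4z_1+z_3$, $z_3'=z_4$ to force strict growth of $z_1,z_2,z_3$ and the identity $(\log x_4-\log y_4)'=4(x_1-y_1)$ to rule out $z_4$ touching zero. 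Two details of your write-up are garbled, though both are repairable: the exponential weighting should be $u=e^{\lambda t}z$, giving $u'=(A(t)+\lambda I)u$ with entrywise non-negative kernel on compact intervals (your $w=e^{-\lambda t}z$ produces $A-\lambda I$, and the displayed integral equation with $(-\lambda+\lambda)I$ is not what you mean), and $z+\varepsilon(1,1,1,1)$ does not satisfy the ODE, so the boundary subtlety must be handled by perturbing the system (e.g. $z_\varepsilon'=A z_\varepsilon+\varepsilon\mathbf{1}$, $z_\varepsilon(0)=z(0)+\varepsilon\mathbf{1}$) or by the Picard iteration above.

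The genuine gap is in the equality clause. The content of that clause, as the paper proves it and later uses it (in the uniqueness argument of Section 6, where strict inequalities $z_i(t)>y_i(t)$ for all $i$ and all $t>0$ are invoked), is a strict dichotomy: either the initial data coincide, or \emph{every} component satisfies $x_i(t)>y_i(t)$ for \emph{every} $t>0$. Your converse argument only rules out full-vector coincidence: backward uniqueness shows $x(t_0)\neq y(t_0)$ as vectors, but it does not exclude a single component $z_i(t_0)=0$ while the others stay positive, which is exactly the situation the lemma must forbid. Within your framework the fix is to use irreducibility of the Metzler matrix $A(t)$ (the influence cycle $2\to1\to4\to3\to2$ is closed because $A_{41}=4y_4>0$), which upgrades $z\geq0$, $z(0)\neq0$ to componentwise strict positivity for $t>0$; equivalently, one can chase positivity around the cycle by hand, e.g. $z_4(t)\geq e^{4\int_s^t x_1}z_4(s)+\int_s^t e^{4\int_\tau^t x_1}\,4y_4(\tau)z_1(\tau)\,d\tau$ and $z_3'=z_4$, $z_2'\geq z_3$, $z_1'=z_2$ — which is in essence the argument the paper gives. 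Without this step the lemma as stated (and as needed downstream) is not fully proved.
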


\begin{proof}
When $x_{i}(0)=y_{i}(0)$, $i=1,2,3,4$, then obviously $x_{i}(t)\equiv y_{i}(t)$
for all $t$. Suppose that $x_{j}(0)>y_{j}(0)$, for some $j\in\{1,2,3,4\}$.
Then by continuity, there exits a $\epsilon>0$ such that $x_{i}(t)>y_{i}(t)$
for $t\in(0,\epsilon)$, $1\leq i\leq j.$ In particular, $x_{1}(t)>y_{1}(t)$
and 
\[
(\log x_{4}(t))'-(\log y_{4}(t))'=4\left(x_{1}(t)-y_{1}(t)\right)>0,
\]
for $t\in(0,\epsilon)$. Therefore, $x_{4}(t)>y_{4}(t)$ on $(0,\epsilon)$
and $x_{i}(t)>y_{i}(t)$ on $(0,\epsilon)$, for $i=1,2,3,4$. Let
\[
J=\{t>0:x_{i}(t)>y_{i}(t)\},t_{0}=\inf\{t>0:t\not\in J\}.
\]
If $t_{0}<\infty$, $x_{i}(t_{0})>y_{i}(t_{0})$ for $i=1,2,3$ and
$x_{4}(t_{0})=y_{4}(t_{0})$. However, 
\[
(\log x_{4})'(t_{0})-(\log y_{4})'(t_{0})=4(x_{1}(t_{0})-y_{1}(t_{0}))>0.
\]
It is impossible because of the definition of $J$. This implies that
$J=(0,\infty)$. 
\end{proof}
The next lemma shows that $\mathcal{Q}$ is not empty. 
\begin{lem}
If $4p+q\leq0$, then $x_{2}<0$ and $x_{2}(t)\to-\infty$ as $t\to\infty$.
\label{lem:4p+q}
\end{lem}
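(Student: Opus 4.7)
The key idea is to interpret $x_{2} = v''$ as a solution of the second-order linear inhomogeneous ODE
\[
x_{2}''(t) - 4 x_{2}(t) = x_{4}(t),
\]
obtained by differentiating the second equation in (\ref{eq:system}) and using the third to eliminate $x_{3}'$. With the initial data $x_{2}(0) = p$ and $x_{2}'(0) = 4x_{1}(0) + x_{3}(0) = 0$, variation of parameters yields the integral representation
\[
x_{2}(t) = p\cosh(2t) + \tfrac{1}{2}\int_{0}^{t} \sinh(2(t-s))\, x_{4}(s)\, ds,
\]
in which the source $x_{4}$ is positive and whose growth is governed by the multiplicative equation $x_{4}' = 4 x_{1} x_{4}$.

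First I would establish $x_{2}(t) < 0$ for all $t > 0$ by a bootstrap argument. Let $t_{*} = \inf\{t > 0 : x_{2}(t) = 0\}$ and work on $[0, t_{*}]$. Since $x_{2} \leq 0$ there, $x_{1}(t) = \int_{0}^{t} x_{2}\,ds \leq 0$, and the multiplicative equation for $x_{4}$ then forces $x_{4}(s) \leq q$ on that interval. Feeding this bound into the representation and using $\tfrac{1}{2}\int_{0}^{t}\sinh(2(t-s))\,ds = \tfrac{1}{4}(\cosh(2t) - 1)$, one obtains
\[
x_{2}(t_{*}) \leq \tfrac{4p + q}{4}\cosh(2t_{*}) - \tfrac{q}{4} \leq -\tfrac{q}{4},
\]
using $4p + q \leq 0$. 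This contradicts $x_{2}(t_{*}) = 0$, so $t_{*} = \infty$.

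For the divergence $x_{2}(t) \to -\infty$, the strict case $4p + q < 0$ is immediate from the same inequality, since $\cosh(2t) \to \infty$. The borderline case $4p + q = 0$ is the main obstacle, since the above bound only yields $x_{2} \leq -q/4$. Here I would first upgrade to the strict inequality $x_{2}(t) < -q/4$ for $t > 0$ (using that $x_{4}(s) < q$ strictly for $s > 0$, which follows from $x_{1}(s) < 0$). Then $x_{2}'' = 4x_{2} + x_{4} < -q + q = 0$ on $(0, \infty)$, so $x_{2}$ is strictly monotone decreasing; and from $x_{2} \leq -q/4$ one gets $x_{1}(t) \leq -qt/4$, hence $x_{4}(t) \leq q e^{-qt^{2}/2} \to 0$ super-exponentially. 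If $x_{2}$ converged to a finite limit $L \leq -q/4$, then $x_{2}'' \to 4L < 0$, which upon integration forces $x_{2}' \to -\infty$ and then $x_{2} \to -\infty$, a contradiction. Thus $x_{2}(t) \to -\infty$ in both subcases, completing the proof.
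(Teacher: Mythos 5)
Your proof is correct, but it takes a genuinely different route from the paper's. You recast the problem through the second-order relation $x_{2}''-4x_{2}=x_{4}$ and its Duhamel representation
\[
x_{2}(t)=p\cosh(2t)+\tfrac12\int_{0}^{t}\sinh\bigl(2(t-s)\bigr)x_{4}(s)\,ds ,
\]
then run a first-zero contradiction using the bootstrap $x_{2}\le0\Rightarrow x_{1}\le0\Rightarrow x_{4}\le q$, which yields the quantitative bound $x_{2}(t)\le\frac{4p+q}{4}\cosh(2t)-\frac{q}{4}$; divergence is immediate when $4p+q<0$, and the borderline case $4p+q=0$ is settled by upgrading to $x_{2}<-q/4$, concavity $x_{2}''<0$, the Gaussian decay $x_{4}(t)\le qe^{-qt^{2}/2}$, and a no-finite-limit argument. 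The paper instead works locally at $t=0$: it reads off $x_{2}''(0)=4p+q$ (and, when $4p+q=0$, the fourth derivative $x_{2}^{(4)}(0)=x_{4}''(0)=4pq<0$) to get $x_{2}<0$, $x_{2}'<0$ on a small interval, and then propagates signs via $x_{2}'''=4x_{2}'+x_{4}'$, which stays negative while $x_{2}'<0$ (using $x_{4}'=4x_{1}x_{4}\le0$), forcing $x_{2}'\to-\infty$ and hence $x_{2}\to-\infty$. The paper's argument is shorter and gives the stronger conclusion $x_{2}'\to-\infty$ directly; yours buys an explicit exponential upper bound on $x_{2}$ and handles the degenerate case $4p+q=0$ without computing higher derivatives at $0$. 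Both rest on the same structural facts of the system (\ref{eq:system}) ($x_{4}>0$, $x_{4}'=4x_{1}x_{4}$, $x_{2}'(0)=0$), and both tacitly use global existence on $[0,\infty)$, which is harmless here since $x_{4}$ stays bounded and the remaining subsystem is then linearly controlled.
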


\begin{proof}
If $4p+q<0$, then $x_{2}(0)=-\frac{q}{4}<0$, $x_{2}'(0)=0$, and
$x_{2}''(0)=4p+q<0$. Hence $x_{2}(t)<0{\rm \ and\ }x_{2}'(t)<0$
for $t\in(0,\epsilon)$. Clearly, by (\ref{eq:system}),
\[
x_{2}'''(t)=4x_{2}'(t)+x_{4}'(t),
\]
 $x_{2}'''(t)$ is negative if $x_{2}'(t)<0$. Therefore $x_{2}'(t)\to-\infty$
and $x_{2}(t)\to-\infty$ as $t\to\infty$.

If $4p+q=0$, then we have $x_{2}^{(4)}(0)=x_{4}''(0)=4pq<0$. Thus,
$x_{2}'(t)<0$ for $t\in(0,\epsilon)$. We then follow the above argument
to get the same result.
\end{proof}
Then we prove the boundedness of $\mathcal{Q}$ by a comparison argument. 
\begin{lem}
For each $p<0$ , there is some $q>0$ such that $\exists T>0$, $x_{2}(t)>0$
for $t>T$.\label{lem:existsqx2positive}
\end{lem}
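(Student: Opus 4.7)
My strategy is to exploit the large-$q$ regime via rescaling. The plan has three steps: (i) rescale so the system has a clean $q\to\infty$ limit that is explicitly solvable; (ii) use continuous dependence on parameters to conclude that $x_2$ reaches zero at some finite time $t_0$; (iii) use the strict monotonicity $x_3'=x_4>0$ to show $x_2$ remains positive for all later times.

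For step (i), I will set $s=\sqrt{q}\,t$ and introduce $y_1(s)=\sqrt{q}\,x_1(s/\sqrt{q})$, $y_2(s)=x_2(s/\sqrt{q})$, $y_3(s)=x_3(s/\sqrt{q})/\sqrt{q}$, $y_4(s)=x_4(s/\sqrt{q})/q$. A direct computation turns (\ref{eq:system}) into
\[
y_1'=y_2,\qquad y_2'=y_3+\tfrac{4}{q}y_1,\qquad y_3'=y_4,\qquad y_4'=\tfrac{4}{q}y_1y_4,
\]
with initial data $(y_1,y_2,y_3,y_4)(0)=(0,p,0,1)$. The formal $q\to\infty$ limit is the chain $y_i'=y_{i+1}$, $y_4'=0$, whose explicit solution is $\bar y_4\equiv 1$, $\bar y_3(s)=s$, $\bar y_2(s)=p+s^2/2$, $\bar y_1(s)=ps+s^3/6$; in particular $\bar y_2$ has a first zero at $s_\ast=\sqrt{2|p|}$ and is strictly positive afterwards.

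For step (ii), fix $S=s_\ast+1$, so $\bar y_2(S)>0$. Standard continuous dependence on parameters for ODEs, applied on the compact interval $[0,S]$ to the $1/q$-family above (together with routine Gronwall a priori bounds on $y^{(q)}$ uniformly in $q$), yields $y_2^{(q)}(S)>0$ for all sufficiently large $q$. Unwinding the rescaling, $x_2(S/\sqrt{q})>0$, so since $x_2(0)=p<0$ the intermediate value theorem produces a first zero $t_0\in(0,S/\sqrt{q})$ with $x_2(t_0)=0$ and $x_2<0$ on $[0,t_0)$. Then $x_2'(t_0)\ge 0$, which via (\ref{eq:system}) reads $x_3(t_0)\ge-4x_1(t_0)=4|x_1(t_0)|$, using $x_1\le 0$ on $[0,t_0]$ (which holds because $x_1'=x_2\le 0$ there and $x_1(0)=0$).

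For step (iii), suppose toward contradiction that some least $t_1>t_0$ satisfies $x_2(t_1)=0$ and $x_2>0$ on $(t_0,t_1)$, so that $x_2'(t_1)\le 0$. Then $x_1'=x_2>0$ on $(t_0,t_1)$ gives $x_1(t_1)>x_1(t_0)$, while $x_3'=x_4>0$ (which holds for all $t$) gives $x_3(t_1)>x_3(t_0)\ge 4|x_1(t_0)|$. If $x_1(t_1)\ge 0$, then $x_2'(t_1)=4x_1(t_1)+x_3(t_1)>0$; if $x_1(t_1)<0$, then $|x_1(t_1)|<|x_1(t_0)|$, so $x_3(t_1)>4|x_1(t_1)|$ and again $x_2'(t_1)>0$. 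Both cases contradict $x_2'(t_1)\le 0$, so no such $t_1$ exists and $x_2(t)>0$ for all $t>t_0$, as required. The main obstacle is step (ii): the heuristic $q\to\infty$ convergence must be made rigorous via uniform-in-$q$ bounds on $y^{(q)}$ over $[0,S]$, and the rescaling must be set up precisely so that the leading-order limiting equation is the chain above; steps (i) and (iii) are then essentially algebraic once the setup is in place.
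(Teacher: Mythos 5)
Your proof is correct, but it takes a genuinely different route from the paper's. The paper argues by contradiction and integrates differential inequalities directly: assuming $x_{2}<0$ for all $t$, it derives $x_{4}\geq q+2pqt^{2}$ and then $x_{2}\geq p+\frac{t^{2}}{2}(4p+q)+\frac{1}{6}pqt^{4}$ on the interval where $x_{2}'\geq0$, and shows that for $q$ large the positivity window of this quartic meets that interval, forcing $x_{2}>0$ somewhere. Your rescaling $s=\sqrt{q}\,t$ isolates the same mechanism structurally rather than by explicit estimates: the limit system is the solvable chain whose $\bar y_{2}=p+s^{2}/2$ crosses zero at $s_{*}=\sqrt{2|p|}$, which is exactly the leading behavior of the paper's root $t_{1}\approx\sqrt{2|p|/q}$, and continuous dependence (with uniform local Lipschitz bounds in $1/q$, which is indeed routine here) converts this into $x_{2}(S/\sqrt{q})>0$ for large $q$. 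What your approach buys is a cleaner conceptual picture of the $q\to\infty$ dynamics, the conclusion for \emph{all} large $q$ rather than some $q$, and, via your step (iii) using $x_{3}'=x_{4}>0$, the literal statement of the lemma ($x_{2}>0$ for all later $t$), which the paper's own proof does not actually establish (it only produces one time with $x_{2}>0$, which is all that Theorem \ref{thm:For-any-fixed6.6} needs to bound $\mathcal{Q}$); what the paper's approach buys is a short, self-contained quantitative argument with no appeal to perturbation theory for ODEs. One small point to add in step (iii): to know $x_{2}>0$ immediately after the first zero $t_{0}$ (so that a least return time $t_{1}$ is the right object), note that if $x_{2}'(t_{0})=0$ then $x_{2}''(t_{0})=4x_{2}(t_{0})+x_{4}(t_{0})=x_{4}(t_{0})>0$, so $t_{0}$ is a strict local minimum of $x_{2}$ with value zero; this is the same observation the paper uses elsewhere (e.g.\ Case 2 in the proof of Theorem \ref{thm:For-any-fixed6.6}) and closes the argument.
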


\begin{proof}
We argue by contradiction. Suppose that on the contrary, for any $q>0$
, $x_{2}(t)<0$ for all $t>0$. Then, $x_{1}(t)<0$ , $x_{4}'(t)<0$,
for $t\in(0,+\infty)$ and
\begin{align*}
x_{4}''(t) & =4(x_{2}(t)+4x_{1}^{2}(t))x_{4}(t)\geq4x_{2}(t)x_{4}(t).
\end{align*}
By (\ref{lem:4p+q}) we may assume that $4p+q>0$. Thus $x_{2}'(t)>0$
for $t\in(0,\epsilon)$ where $\epsilon$ is small. Let $T>0$ be
maximal such that $x_{2}'(t)\geq0$ for $t\in[0,T)$ . Then, $x_{4}(t)\leq q$
and $x_{2}(t)\geq p$ for $t\in[0,T)$. Hence, we obtain direct estimates:
\begin{align*}
x_{4}''(t) & \ge4x_{2}(t)x_{4}(t)\geq4pq,\\
x_{4}'(t) & \geq4pqt,\\
x_{4}(t) & \geq q+2pqt^{2}.
\end{align*}
 Since 
\[
x_{2}''(t)=4x_{2}+x_{4}\geq4p+q+2pqt^{2},
\]
 we have 
\begin{equation}
x_{2}'(t)\geq(4p+q)t+\frac{2}{3}pqt^{3}.\label{eq:estimateforx2'}
\end{equation}
(\ref{eq:estimateforx2'}) holds for all $t\in(0,T]$ especially for
$t=T$. By (\ref{eq:estimateforx2'}), we see that 
\[
T\geq\sqrt{\frac{4p+q}{-\frac{2}{3}pq}}=\sqrt{\frac{4\frac{p}{q}+1}{-\frac{2}{3}p}}.
\]
 Another consequence of (\ref{eq:estimateforx2'}) is 
\[
x_{2}(t)\geq p+\frac{t^{2}}{2}(4p+q)+\frac{1}{6}pqt^{4}.
\]
Let $t_{1}<t_{2}$ be two positive roots of $p+\frac{t^{2}}{2}(4p+q)+\frac{1}{6}pqt^{4}$.
If there exists $t<T$ such that 
\[
t_{1}^{2}=\frac{-\sqrt{\left(4p+q\right)^{2}-\frac{8p^{2}q}{3}}+\left(4p+q\right)}{-\frac{2}{3}pq}<t^{2}<\frac{\sqrt{\left(4p+q\right)^{2}-\frac{8p^{2}q}{3}}+\left(4p+q\right)}{-\frac{2}{3}pq}=t_{2}^{2},
\]
then $x_{2}(t)>0$, which contradicts with our assumption. Thus, it
is sufficient to show the interval $(t_{1},t_{2})\cap(0,T)\not=\varnothing$
for large $q$. Let $z=\frac{p}{q}$. We see that
\[
t_{1}^{2}=\frac{-\sqrt{\left(4z+1\right)^{2}-\frac{8pz}{3}}+\left(4z+1\right)}{-\frac{2}{3}p}\to0,\quad as\ q\to\infty,
\]
\[
\sqrt{\frac{4p+q}{-\frac{2}{3}pq}}=\sqrt{\frac{4z+1}{-\frac{2}{3}p}}\to\sqrt{-\frac{3}{2p}}>0,\quad as\ q\to\infty.
\]
Hence, for large $q$, 
\[
t_{1}<\sqrt{\frac{4p+q}{-\frac{2}{3}pq}}\leq\min\{T,t_{2}\}.
\]
We have thus finished the proof.
\end{proof}
Finally, we prove that there are bounded solutions of (\ref{eq:system}),
which establishes the existence part of Theorem \ref{thm:ODE}.
\begin{thm}
For any fixed $p<0$, there is a unique $q>0$ such that $4p+q>0$
and the solution of system (\ref{eq:system}) is bounded for all $t$
with initial data $x_{1}(0)=x_{3}(0)=0$, $x_{2}(0)=p$ and $x_{4}(0)=q.$
\label{thm:For-any-fixed6.6}
\end{thm}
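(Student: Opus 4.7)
The plan is to identify the desired $q$ as $q^{*} = \sup \mathcal{Q}$ and to verify boundedness and uniqueness via the Monotonicity Lemma together with continuous dependence on initial data. By Lemma \ref{lem:4p+q}, the interval $(0, -4p] \subset \mathcal{Q}$ is nonempty, and Lemma \ref{lem:existsqx2positive} bounds $\mathcal{Q}$ from above. The Monotonicity Lemma \ref{lem:ODE (Monotonicity-lemma)} forces $\mathcal{Q}$ to be a left-interval: for $0 < q_1 < q_2$ with $q_2 \in \mathcal{Q}$, componentwise comparison of the initial data yields $x_2^{q_1}(t) \leq x_2^{q_2}(t) < 0$ for all $t > 0$, so $q_1 \in \mathcal{Q}$. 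Hence $q^{*} \in [-4p, \infty)$ is well defined.

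The main step is showing that $X^{q^{*}}$ is bounded on $[0,\infty)$ and converges to a fixed point of the system. Continuous dependence with $q_n \uparrow q^{*}$, $q_n \in \mathcal{Q}$, gives $x_2^{q^{*}}(t) \leq 0$ for all $t \geq 0$. Then I rule out $x_2^{q^{*}}(t) \to -\infty$ by invoking the first integral \eqref{eq:first intergral 1} rewritten as
\begin{equation*}
x_4 \,=\, 2p^{2} + q^{*} + 8x_1^{2} + 4x_1 x_3 - 2x_2^{2};
\end{equation*}
if $x_2^{q^{*}}$ diverged, then by continuous dependence on long (but finite) intervals, together with Lemma \ref{lem:ODE (Monotonicity-lemma)}, one could produce $q$ slightly larger than $q^{*}$ for which $x_2^{q}$ remains strictly negative, contradicting the definition of $q^{*}$. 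Hence $x_2^{q^{*}}$ is bounded, so $x_1^{q^{*}} = \int_0^{t} x_2^{q^{*}}$ converges to some $a_{*} \leq 0$, and via the first integral $x_3^{q^{*}}, x_4^{q^{*}}$ are bounded as well. Since all fixed points of \eqref{eq:system} lie on the line $\{(a, 0, -4a, 0) : a \in \mathbb{R}\}$, an $\omega$-limit argument then gives $X^{q^{*}}(t) \to (a_{*}, 0, -4a_{*}, 0)$ with $a_{*} = -\sqrt{(2p^{2}+q^{*})/8}$. In particular $4p + q^{*} > 0$, since $4p + q^{*} \leq 0$ would force $x_2^{q^{*}} \to -\infty$ by Lemma \ref{lem:4p+q}. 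The symmetry $t \mapsto -t$ of the initial data extends the boundedness to all of $\mathbb{R}$.

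Uniqueness follows from the first integral at the fixed point: any bounded solution must converge as $t \to \infty$ to some $(a, 0, -4a, 0)$ with $a^{2} = (2p^{2}+q)/8$. If $q_1 < q_2$ both yielded bounded solutions, then Lemma \ref{lem:ODE (Monotonicity-lemma)} would give $x_1^{q_1}(t) \leq x_1^{q_2}(t)$, so the limits $a_{q_1} \leq a_{q_2} \leq 0$ satisfy $a_{q_1}^{2} \geq a_{q_2}^{2}$; but the fixed-point relation forces $a_{q_1}^{2} = (2p^{2}+q_1)/8 < (2p^{2}+q_2)/8 = a_{q_2}^{2}$, a contradiction. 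Hence at most one $q$ gives a bounded solution, and together with the existence above it must be $q^{*}$. The principal obstacle is the step ruling out $x_2^{q^{*}}(t) \to -\infty$: it cannot be handled by any single a priori ODE estimate and requires combining the first integral with the supremum characterisation of $q^{*}$ through continuous dependence on initial data.
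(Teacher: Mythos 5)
Your setup (the set $\mathcal{Q}$, Lemmas \ref{lem:4p+q} and \ref{lem:existsqx2positive}, the left-interval structure from Lemma \ref{lem:ODE (Monotonicity-lemma)}, and $q^{*}=\sup\mathcal{Q}$) is exactly the paper's, but the two steps you lean on at the crux are not actually established. First, your exclusion of $\liminf_{t\to\infty}x_{2}^{q^{*}}=-\infty$: you propose to ``produce $q$ slightly larger than $q^{*}$ for which $x_{2}^{q}$ remains strictly negative'' via continuous dependence and Lemma \ref{lem:ODE (Monotonicity-lemma)}. Continuous dependence only controls a compact window $[0,T]$, and for $q>q^{*}$ the Monotonicity Lemma pushes $x_{2}^{q}$ \emph{up} ($x_{2}^{q}\geq x_{2}^{q^{*}}$), so neither tool prevents $x_{2}^{q}$ from crossing zero after time $T$; to close this you would need a trapping/invariance argument in the spirit of Lemma \ref{lem:4p+q} (control of $x_{2}'$ and of $x_{2}''=4x_{2}+x_{4}$ at the hand-off time), which you neither state nor prove. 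The paper argues in the opposite direction: for $q_{i}\downarrow q^{*}$ with $q_{i}>q^{*}$ each $x_{2}^{q_{i}}$ must reach $0$, so by the mean value theorem there is an interior critical point $\tau_{i}$ of $x_{2}^{q_{i}}$ with $x_{2}^{q_{i}}(\tau_{i})\leq x_{2}^{q^{*}}(t_{k})$, and evaluating the first integral (\ref{eq:first integral 2}) at $\tau_{i}$ (where $(x_{2}^{q_{i}})'=0$, $x_{3}^{2}\geq0$, $x_{4}>0$) yields $(x_{2}^{q^{*}}(t_{k}))^{2}\leq 2p^{2}+2q^{*}$, a contradiction.

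Second, the inference ``$x_{2}^{q^{*}}$ bounded, so $x_{1}^{q^{*}}=\int_{0}^{t}x_{2}^{q^{*}}$ converges'' is a non sequitur: a bounded nonpositive $x_{2}$ with $x_{2}\to-c<0$ gives $x_{1}\to-\infty$. Ruling out a negative limit or negative $\liminf$ of $x_{2}^{q^{*}}$ is precisely the content of the paper's Cases 4 and 5 (Gaussian decay of $x_{4}$ once $x_{1}$ drifts linearly, then $x_{2}'=4x_{1}+x_{3}\to-\infty$, plus the first integral), and your proposal skips it entirely; likewise the asserted convergence to a fixed point (``$\omega$-limit argument'') is not a consequence of boundedness alone and is never proved, yet your value $a_{*}=-\sqrt{(2p^{2}+q^{*})/8}$ and your whole uniqueness argument rest on it. Even granting boundedness of $x_{2}$, bounding $x_{3},x_{4}$ ``via the first integral'' needs the extra inputs the paper uses ($x_{3}$ monotone, $x_{4}>0$, $x_{1}$ bounded away from $0$, and the dichotomy on whether $x_{2}$ oscillates or is eventually increasing). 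Uniqueness can be had much more cheaply, as in the paper's Theorem 6.7: for $q_{1}<q_{2}$ the Monotonicity Lemma makes $x_{2}^{q_{2}}-x_{2}^{q_{1}}$ positive and increasing, so $x_{1}^{q_{2}}-x_{1}^{q_{1}}\to\infty$ and at most one $q$ can give a bounded solution — no convergence claim needed. As it stands, your proposal identifies the right candidate $q^{*}$ but leaves the genuinely hard steps (lower bound on $x_{2}^{q^{*}}$, exclusion of negative limits, and boundedness of the full trajectory) unproved.
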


\begin{proof}
Let 
\[
\mathcal{Q}=\{q>0:\ \forall t>0,\ x_{2}(t)<0\ with\ x_{2}(0)=p,\ x_{4}(0)=q\}.
\]
By Lemma \ref{lem:4p+q}, if $4p+q\leq0$, $q\in\mathcal{Q}\not=\varnothing$.
By monotonicity lemma \ref{lem:ODE (Monotonicity-lemma)}, $\mathcal{Q}$
is a connected set and by Lemma \ref{lem:existsqx2positive}, $q_{0}=\sup\{q\in\mathcal{Q}\}<\infty$.
We claim that $q_{0}$ is a choice such that the corresponding solution
of (\ref{eq:system}) is bounded. Let $\{y_{i}(t),i=1,2,3,4\}$ be
a solution of (\ref{eq:system}) with initial value:
\[
(y_{1},y_{2},y_{3},y_{4})(0)=(0,p,0,q_{0}).
\]
 We claim that $y_{2}(t)\to0$ as $t\to\infty$. We prove this claim
by excluding several cases.

Case 1. $\exists t_{0}>0$ such that $y_{2}(t)<0$ for $0\leq t<t_{0}$,
$y_{2}(t_{0})=0$, and $y_{2}'(t_{0})>0$. Then, there is $t_{1}>t_{0}$
such that $y_{2}(t_{1})>0$. Because our solutions are continuously
dependent on the initial values, there is a $q'<q_{0}$ such that
a solution $z(t)$ with $z_{i}(0)=y_{i}(0),i=1,2,3$, $z_{4}(0)=q'$
and $|z_{2}(t_{1})-y_{2}(t_{1})|<y_{2}(t_{1})/2$. Then $z(t_{1})>\frac{y_{2}(t_{1})}{2}>0$
which contradicts the definition of $q_{0}$.

Case 2. $\exists t_{0}>0$ such that $y_{2}(t)<0$ for $0\leq t<t_{0}$,
$y_{2}(t_{0})=0$, and $y_{2}'(t_{0})=0$. This can be ruled out since
\[
y_{2}''(t_{0})=4y_{2}(t_{0})+y_{4}(t_{0})>0,
\]
and $y_{2}(t_{0})$ is a local minimum, which contradicts the assumption
of $t_{0}$. 

We conclude from the Case 1 and 2 that $y_{2}(t)<0$ for all $t>0$.
Hence, $y_{1}(t)<0$ for all $t>0$.

Case 3. $\liminf\limits _{t\to+\infty}y_{2}(t)=-\infty$. 

Pick an increasing sequence $\{t_{k}\}$ such that $t_{k}\to\infty$
and $y_{2}(t_{k})\to-\infty$ as $k\to+\infty$. For each $k$, we
assume further that $y_{2}(t)<-\varepsilon_{k}$ for some $\varepsilon_{k}$
on $(0,t_{k})$. By the definition of $q_{0}$, there is a sequence
$\{q_{i}\}$ such that $q_{i}>q_{0}$ and $q_{i}\to q_{0}$, and there
is a sequence of solutions $\{x^{i}(t)\}_{i=1}^{\infty}$ with initial
value $(0,p,0,q_{i})$ such that 
\[
||x_{j}^{i}(t)-y_{j}(t)||_{\infty}\to0,\ j=1,2,3,4,
\]
as $i\to\infty$ in any compact subset of $\mathbb{R}$. For $t\in(0,t_{k}]$,
pick $i_{k}$ such that $||x_{j}^{i_{k}}(t)-y_{j}(t)||_{\infty}<\varepsilon_{k}$.
However, since $q_{i_{k}}>q_{0}$, there is $t_{i_{k}}^{*}$ such
that $x_{2}^{i_{k}}(t_{i_{k}}^{*})=0$. By mean value theorem, there
is a $\tau_{i_{k}}>0$ such that $(x_{2}^{i_{k}})'(\tau_{i_{k}})=0$
and $x_{2}^{i_{k}}(\tau_{i_{k}})\leq y_{2}(t_{k})$. By (\ref{eq:first integral 2}),
\begin{align}
(y_{2}(t_{k}))^{2} & \leq2(x_{2}^{i_{k}})^{2}-\frac{1}{2}((x_{2}^{i_{k}})')^{2}+\frac{1}{2}(x_{3}^{i_{k}})^{2}+x_{4}^{i_{k}}\label{eq:y_2 is bounded contradiction}\\
 & =2p^{2}+q_{i_{k}}<2p^{2}+2q_{0}.\nonumber 
\end{align}
This contradicts with our assumption of Case 3.

Case 4. $\lim\limits _{t\to+\infty}y_{2}(t)=-c<0$.

In this case, $y_{1}(t)<-\frac{c}{2}t+b$ for some constant $b$.
By (\ref{eq:system}), 
\[
y_{4}'(t)<(b-\frac{c}{2}t)y_{4},
\]
 and 
\[
y_{4}(t)\leq C\exp(bt-\frac{c}{4}t^{2}).
\]
Therefore, $y_{3}(t)=\int_{0}^{t}y_{4}(s)ds+y_{3}(0)$ is bounded.
Since $y_{2}'(t)=4y_{1}+y_{3}$ , we see $y_{2}'(t)<-\frac{c}{2}t+b_{1}$
for some constant $b_{1}$, which implies that $y_{2}$ is not bounded
and cannot have negative limit. We have reached a contradiction with
the assumption of Case 4.

Case 5. $\liminf\limits _{t\to+\infty}y_{2}(t)=-c<0$ while $\limsup\limits _{t\to+\infty}y_{2}(t)>-c$. 

We pick a sequence $t_{n}\to+\infty$ such that $y_{2}(t_{n})\to-c$
, $y_{2}'(t_{n})=0$, 
\[
y_{2}''(t_{n})=4y_{2}(t_{n})+y_{4}(t_{n})\geq0,
\]
 which means $\lim_{n\to\infty}y_{4}(t_{n})\geq4c$. But by (\ref{eq:system}),
\[
y_{4}'(t)=4y_{4}(t)y_{1}(t)<0
\]
 implies that $y_{4}$ is monotone and $y_{4}(t)\geq4c$ for $t\to\infty$.
However, by (\ref{eq:system}), $y_{3}(t)$ is then unbounded. By
evaluating (\ref{eq:first intergral 1}) at $t_{n}$, a contradiction
is reached.

We summarize our discussion above. By ruling out cases 1 to 5, we
have proved that $y_{2}(t)\to0$ as $t\to+\infty$. 

We now prove that $y(t)$ is a bounded solution of (\ref{eq:system}).
We split the proof into two cases. 

Case 1, suppose that $y_{2}(t)$ oscillates as $t\to+\infty$, i.e.
there exist $t_{k}\to\infty$ as $k\to\infty$ such that $y_{2}'(t_{k})=0$. 

Since $y_{3}$ is monotone, (\ref{eq:first integral 2}) implies that
$y_{3}$ is bounded. Furthermore, by (\ref{eq:first integral 2}),
$y_{2}'(t)$ is bounded. Then, $y_{2}'(t)=4y_{1}(t)+y_{3}(t)$ implies
that $y_{1}(t)$ is bounded. Hence, we have proved the theorem for
this case.

Case 2, if $y_{2}$ is increasing for big $t$, i.e. $y_{2}'>0$ for
$t>t^{*}\gg0$. 

By (\ref{eq:system}), we have
\[
\frac{d}{dt}\left(\frac{1}{2}(y_{2}'(t))^{2}-2y_{2}^{2}(t)\right)>0,t>t^{*}.
\]
Thus, for $t^{*}<t_{1}<t_{2}$.
\begin{equation}
\frac{1}{2}(y_{2}'(t_{2}))^{2}-2y_{2}^{2}(t_{2})>\frac{1}{2}(y_{2}'(t_{1}))^{2}-2y_{2}^{2}(t_{1}),\label{eq:monotonicity in the last part sec 6}
\end{equation}
We claim $\frac{1}{2}(y_{2}'(t_{1}))^{2}-2y_{2}^{2}(t_{1})\leq0$
for $t_{1}>t^{*}$. If not, then for any $t_{1}>t^{*}$, 
\begin{equation}
\frac{1}{2}(y_{2}'(t_{1}))^{2}-2y_{2}^{2}(t_{1})>c>0.\label{monotonicity formula in the last part of proof og them6.6; sec6}
\end{equation}
Since $y_{2}(t)\to0$, we may choose $t_{2}>t_{1}$ such that $|y_{2}(t)|<\frac{c}{2}$.
Then, by (\ref{eq:monotonicity in the last part sec 6}) and (\ref{monotonicity formula in the last part of proof og them6.6; sec6}),
we have $\frac{1}{2}(y_{2}'(t_{2}))^{2}>\frac{c}{2}$ . This shows
that $y_{2}$ is at least linearly increasing, which contradicts with
the fact that $y_{2}\to0$. Hence, we have proved that
\begin{equation}
\frac{1}{2}(y_{2}'(t_{1}))^{2}-2y_{2}^{2}(t_{1})\leq0,\label{eq:last bounded;sec 6}
\end{equation}
for $t_{1}>t^{*}$. (\ref{eq:last bounded;sec 6}) shows that $y_{2}'$
and $y_{3}(t)$ are bounded. We use $y_{2}'(t)=4y_{1}(t)+y_{3}(t)$
in (\ref{eq:system}) to show that $y_{1}(t)$ is also bounded. This
finishes the proof of case 2. 

We have thus proved Theorem \ref{thm:For-any-fixed6.6}.
\end{proof}
We now discuss the uniqueness part in Theorem \ref{thm:ODE}.
\begin{thm}
Fix a constant in the right hand side of the first integral (\ref{eq:first intergral 1}),
the bounded solution to the system (\ref{eq:system}) is unique up
to a translation(dilation) in $t$.
\end{thm}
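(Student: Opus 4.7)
My plan is to show that every bounded solution of (\ref{eq:system}) must, after a suitable time translation, coincide with one of the symmetric solutions constructed in Theorem \ref{thm:For-any-fixed6.6}, and that distinct values of the symmetry parameter $p$ produce distinct values of the first integral.

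First, I would analyze the asymptotic structure of a bounded solution. Since $x_4=e^{4v}>0$ along any solution, the equation $x_3'=x_4$ shows $x_3$ is strictly increasing. Combined with the first integral (\ref{eq:first intergral 1}) and the fact that all fixed points of (\ref{eq:system}) lie on the line $\{(a,0,-4a,0):a\in\mathbb{R}\}$ with $c=8a^2$ at such a point, boundedness forces $y(t)\to(\pm\alpha,0,\mp 4\alpha,0)$ as $t\to\mp\infty$ with $\alpha=\sqrt{c/8}$. In particular $x_3$ has opposite-signed limits and therefore vanishes at a unique instant, which I translate to $t=0$. The initial data are then $y(0)=(a,b,0,d)$ with $c=2b^2-8a^2+d$.

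Second, I would establish that $a=0$. The system (\ref{eq:system}) admits the involution $T\colon y(t)\mapsto(-x_1(-t),x_2(-t),-x_3(-t),x_4(-t))$, which preserves the flow and the first integral and sends bounded solutions to bounded solutions. Since $(Ty)(0)=(-a,b,0,d)$ still has its third coordinate vanishing at $t=0$, and the zero of $x_3$ along each bounded orbit is unique by Step 1, this is the normalized initial data of the orbit $Ty$. If $a\neq 0$, then $y$ and $Ty$ are two distinct bounded orbits in the same level set $\{c=\mathrm{const}\}$. I would rule this out by a shooting argument that generalizes Theorem \ref{thm:For-any-fixed6.6}: for fixed $(a,b)$, the monotonicity Lemma \ref{lem:ODE (Monotonicity-lemma)} implies $d\mapsto$(the forward behavior of $x_2$) is monotone, so boundedness as $t\to+\infty$ pins down a unique $d_+(a,b)$, and analogously boundedness as $t\to-\infty$ pins down $d_-(a,b)$. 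Combining the equality $d_+(a,b)=d_-(a,b)$ with the symmetry $d_\pm(-a,b)=d_\mp(a,b)$ induced by $T$ forces $a=0$.

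Third, once $a=0$, the solution is symmetric with initial data $(0,b,0,d)$, and Theorem \ref{thm:For-any-fixed6.6} supplies a unique $d=q_0(b)$ for each $b<0$. To complete the proof I would show that $c(b):=2b^2+q_0(b)$ is strictly monotone on $(-\infty,0)$, using continuous dependence of solutions on initial data together with Lemma \ref{lem:ODE (Monotonicity-lemma)} to compare the asymptotic slopes $\alpha$ of the resulting bounded orbits. Then each value of $c$ corresponds to a unique $b$, hence a unique bounded orbit up to time translation.

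The main obstacle is the second step: proving $a=0$. The reflection $T$ instantly produces a second bounded solution with the same first integral, but to convert this into a contradiction when $a\neq 0$ one must argue that, modulo time translation, the bounded orbits in $\{c=\mathrm{const}\}$ form a single one-parameter family. Equivalently, the two-dimensional stable and unstable manifolds at the asymptotic fixed points intersect transversally inside the three-dimensional energy level. A concrete route is a careful ODE shooting argument building on the sup-of-$\mathcal{Q}$ construction of Theorem \ref{thm:For-any-fixed6.6}, applied to the two-parameter family of initial data $(a,b,0,d)$; a more abstract route is a linearization of (\ref{eq:system}) along the known symmetric orbit and a dimension count of bounded solutions of the linearized equation at $\pm\infty$.
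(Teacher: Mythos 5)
Your central step (Step 2) contains a genuine gap, and the specific deduction you offer for it is a non sequitur. From $d_{+}(a,b)=d_{-}(a,b)$ (the given orbit is bounded) together with the reflection identity $d_{\pm}(-a,b)=d_{\mp}(a,b)$ you can only conclude $d_{+}(-a,b)=d_{-}(-a,b)$, i.e. that the reflected orbit $Ty$ is also bounded --- which is automatic and gives no information about $a$. To force $a=0$ you would need injectivity (say strict monotonicity) of $a\mapsto d_{+}(a,b)$, and before that you would need $d_{\pm}(a,b)$ to be well defined at all, i.e. that for non-symmetric data $(a,b,0,d)$ there is exactly one threshold value of $d$ producing a forward-bounded orbit. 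Both assertions amount to redoing the whole shooting analysis of Theorem \ref{thm:For-any-fixed6.6} in a two-parameter setting; you name this as ``the main obstacle'' and sketch possible routes (two-parameter shooting, or transversality of stable/unstable manifolds in the level set of (\ref{eq:first intergral 1})) but do not carry any of them out. Your Step 3, the strict monotonicity of $c(b)=2b^{2}+q_{0}(b)$, is likewise asserted rather than proved. So as written the argument establishes uniqueness only within the symmetric family, modulo unproved monotonicity claims, and does not exclude non-symmetric bounded orbits --- which is exactly the content of the theorem.

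For comparison, the paper's proof circumvents your Step 2 entirely. It normalizes by translation so that $x_{1}(0)=0$ (possible since $x_{1}$ changes sign along a bounded orbit), writes the data as $(0,a,b,c)$, and uses the reflection $(-1)^{i}z_{i}(-t)$ only to reduce to $b\geq0$. It then compares directly with the symmetric bounded solution $(0,p,0,q)$ of Theorem \ref{thm:For-any-fixed6.6} having the same constant: the identity $2a^{2}+c=2p^{2}+q$ converts the comparison of $a$ with $p$ into a comparison of $c$ with $q$, so the two initial vectors are ordered componentwise, and Lemma \ref{lem:ODE (Monotonicity-lemma)} gives strict domination for $t>0$ unless the data coincide; strict domination over a bounded solution forces $x_{1}\to\pm\infty$, contradicting boundedness of the system (\ref{eq:system}). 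This both avoids having to prove a priori that the orbit is symmetric and delivers, as a by-product, the injectivity of the map from the symmetric parameter to the first-integral constant that your Step 3 needs as an input. If you wish to salvage your route, the missing ingredient is precisely a strict-monotonicity statement for the boundedness threshold in the $x_{1}(0)$-parameter (or an actual dimension/transversality count along the symmetric orbit); neither is supplied in the proposal.
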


\begin{proof}
By a translation, we may assume that $x_{1}(0)=0$. Suppose the initial
data is given by 
\[
(x_{1},x_{2},x_{3},x_{4})(0)=(0,a,b,c),
\]
where $c>0$ and the corresponding solution $z_{i}(t)$. Then $(-1)^{i}z_{i}(-t)$
with initial value $(0,a,-b,c)$ is also a solution of (\ref{eq:system}).
Without loss of generality, we may assume $b>0$ and $2a^{2}+c=2p^{2}+q$.
Suppose $y_{i}(t)$ is a bounded solution such that 
\[
(y_{1},y_{2},y_{3},y_{4})(0)=(0,p,0,q).
\]
If $p\leq a<0$ then $q\leq c$. By Lemma \ref{lem:ODE (Monotonicity-lemma)},$z_{i}(t)>y_{i}(t)$
for $t>0$. Hence 
\begin{equation}
z_{2}(t)-y_{2}(t)>0,(z_{2}(t)-y_{2}(t))'>0\label{eq:connecting inequality in theorem 6.7}
\end{equation}
Furthermore, since the inequalities in (\ref{eq:connecting inequality in theorem 6.7})
are strict, by Lemma \ref{lem:ODE (Monotonicity-lemma)}, if $y_{2}$
is bounded, then $z_{1}(t)\to\infty$. Thus, $z(t)$ can not be a
bounded solution of (\ref{eq:system}). If $p>a$ then $q>c$, let
$\bar{z}_{i}(t)=(-1)^{i}z_{i}(-t)$. Then still $\bar{z}_{i}(t)<y_{i}(t)$
for $t>0$ . By the same argument, $\bar{z}_{1}(t)\to-\infty$ as
$t\to\infty$. Hence, $z(t)$ can not be a bounded solution either.
Thus, we have proved the theorem.
\end{proof}

\section{Asymptotic Behavior}

In this section, we establish a local asymptotic expansion for solutions
of (\ref{eq: PDE in R^4}) in $\mathbb{R}^{4}$. Let $\mathcal{P}_{m}$
be the space of homogeneous polynomials with degree $m$. The eigenvalues
and eigenfunctions of Laplacian $\Delta$ on $\mathbb{R}^{4}$ are
described as follows.
\begin{lem}
(e.g. \cite{lee1987yamabe}) Suppose that $x\in\mathbb{R}^{4}$ $r=|x|$.
The eigenvalues of $r^{2}\Delta$ on $\mathcal{P}_{m}$ are \label{lem:(e.g.-)-Lee-Park}
\[
\{\lambda_{j}=2j(2+2m-2j):\ j=0,1...,[m/2]\}.
\]
The eigenfunctions corresponding to $\lambda_{j}$ are the functions
of the form $r^{2j}u$, where $u\in\mathcal{P}_{m-2j}$ is harmonic.
\end{lem}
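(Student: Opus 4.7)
The plan is to invoke the classical spherical harmonic decomposition of homogeneous polynomials and then compute the action of $r^2\Delta$ on each summand directly using the product rule. Explicitly, I will start from the well-known decomposition
\[
\mathcal{P}_m=\bigoplus_{j=0}^{\lfloor m/2\rfloor} r^{2j}\mathcal{H}_{m-2j},
\]
where $\mathcal{H}_k$ denotes the space of degree-$k$ homogeneous harmonic polynomials on $\mathbb{R}^4$. Because $\Delta$ lowers the degree by $2$ and $r^2$ raises it by $2$, the operator $r^2\Delta$ preserves $\mathcal{P}_m$, so diagonalizing it amounts to showing that each subspace $r^{2j}\mathcal{H}_{m-2j}$ is itself an eigenspace.

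For the main computation, I will take $u\in\mathcal{H}_{m-2j}$ and apply the Leibniz rule for the Laplacian,
\[
\Delta(r^{2j}u)=(\Delta r^{2j})\,u+2\nabla r^{2j}\cdot\nabla u+r^{2j}\Delta u.
\]
The three ingredients are standard: in dimension $4$ one has $\Delta r^{2j}=2j(2j+2)r^{2j-2}$ and $\nabla r^{2j}=2j\,r^{2j-2}x$; Euler's relation for the homogeneous function $u$ gives $x\cdot\nabla u=(m-2j)u$; and harmonicity of $u$ kills the last term. Combining these yields
\[
\Delta(r^{2j}u)=\bigl[2j(2j+2)+4j(m-2j)\bigr]r^{2j-2}u=4j(m-j+1)\,r^{2j-2}u,
\]
so that $r^2\Delta(r^{2j}u)=\lambda_j (r^{2j}u)$ with $\lambda_j=2j(2+2m-2j)$, as claimed.

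To conclude that these are all of the eigenvalues on $\mathcal{P}_m$, I will observe that the decomposition above provides a basis of $\mathcal{P}_m$ consisting of eigenvectors of $r^2\Delta$, and that the values $\lambda_j$ are pairwise distinct for $0\le j\le\lfloor m/2\rfloor$: indeed, $\lambda_{j_2}-\lambda_{j_1}=4(j_2-j_1)(m+1-j_1-j_2)$, and the second factor is strictly positive in the admissible range since $j_1+j_2\le m$. Consequently, the eigenspaces are exactly $r^{2j}\mathcal{H}_{m-2j}$.

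No step is really an obstacle here; the lemma is essentially a bookkeeping consequence of the spherical harmonic decomposition, and the only arithmetic point requiring care is keeping track of the dimension-dependent constants ($n=4$) in $\Delta r^{2j}$ and in $x\cdot\nabla u$. The classical decomposition itself can simply be cited (e.g.\ \cite{lee1987yamabe}), so the entire argument reduces to the short Leibniz-rule calculation above.
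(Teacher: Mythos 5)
Your proof is correct: the Leibniz-rule computation $\Delta(r^{2j}u)=\bigl[2j(2j+2)+4j(m-2j)\bigr]r^{2j-2}u=4j(m-j+1)r^{2j-2}u$ (using $\Delta r^{2j}=2j(2j+2)r^{2j-2}$ in dimension $4$, Euler's identity, and harmonicity of $u$) gives exactly $\lambda_j=2j(2+2m-2j)$, and your observation that the $\lambda_j$ are pairwise distinct correctly upgrades the decomposition $\mathcal{P}_m=\bigoplus_{j} r^{2j}\mathcal{H}_{m-2j}$ to the statement that these are all the eigenvalues with eigenspaces exactly $r^{2j}\mathcal{H}_{m-2j}$. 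The paper gives no proof of this lemma, merely citing Lee--Parker, and your argument is the standard one found there, so the approaches coincide.
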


The above lemma indicates that if $a$ is not an eigenvalue, $(r^{2}\Delta-a)$
is invertible. The next lemma shows that with the presence of a singular
weight, we may still solve the double Laplace equation in $\mathcal{P}_{m}$.
\renewcommand{\labelenumi}{\arabic{enumi}.}
\begin{lem}
Let $\beta\in\mathbb{R}$, $-1<\beta<0$ . \label{lem:polynomial lemma}
\begin{enumerate}
\item $\beta\not=-1/2$. For any polynomial $f(x)$, there is a polynomial
$q(x)$ such that 
\[
\Delta^{2}(q(x)r^{4\beta+4})=f(x)r^{4\beta}.
\]
 More generally, there is a collection of polynomials $\{q_{l}\}_{l=0}^{k}$
such that 
\begin{equation}
\Delta^{2}(\sum_{l=0}^{k}q_{l}(x)r^{4\beta+4}(\log r)^{l})=f(x)r^{4\beta}(\log r)^{k}.\label{eq:log^l}
\end{equation}
\item $\beta=-1/2$. For any polynomial $f$ with degree $\leq2$ with $f(x)=\sum a_{ij}x_{i}x_{j}+\sum b_{i}x_{i}+c$,
there exists a function 
\begin{equation}
q(x)=a_{0}r^{2}+\tilde{a}_{ij}x_{i}x_{j}\log r+\tilde{b_{i}}x_{i}\log r+\tilde{c}\log r,\label{eq:form beta=00003D-1/2}
\end{equation}
 such that $\Delta^{2}(q(x)r^{2})=f(x)r^{-2}$. In particular, $a_{0}$
vanishes if $\tilde{a}_{ii}=0$, $i=1,2,3,4$, and $\tilde{a}_{ij}=0$
if $\deg f<2$. There exist polynomials $q_{l}$ such that 
\begin{equation}
\Delta^{2}\left(r^{2}\sum_{l=0}^{k+1}q_{l}(x)(\log r)^{l}\right)=f(x)r^{-2}(\log r)^{k}.\label{eq:beta=00003D-1/2 and (logr)^k}
\end{equation}
\end{enumerate}
\end{lem}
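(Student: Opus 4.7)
The approach is to reduce the problem to an invertibility question on each homogeneous subspace $\mathcal{P}_m$. A direct computation using Euler's identity gives, for $q \in \mathcal{P}_m$ and $\alpha \in \mathbb{R}$,
\begin{equation*}
\Delta(q r^\alpha) = r^{\alpha-2}\, T_\alpha(q), \qquad T_\alpha(q) := \alpha(\alpha+2m+2)\,q + r^2 \Delta q,
\end{equation*}
so $T_\alpha : \mathcal{P}_m \to \mathcal{P}_m$ is linear and, by Lemma \ref{lem:(e.g.-)-Lee-Park}, has spectrum $\{\alpha(\alpha+2m+2) + 2j(2+2m-2j): 0 \le j \le \lfloor m/2\rfloor\}$. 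Iterating yields $\Delta^2(q r^\alpha) = r^{\alpha-4}(T_{\alpha-2}\circ T_\alpha)(q)$, so item~(1) on each $\mathcal{P}_m$ amounts to inverting $T_{4\beta+2}\circ T_{4\beta+4}$.

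For item~(1) I will check that neither $T_{4\beta+4}$ nor $T_{4\beta+2}$ has a zero eigenvalue on any $\mathcal{P}_m$. For $T_{4\beta+4}$ the eigenvalues are manifestly positive when $-1<\beta<0$. For $T_{4\beta+2}$ they are positive when $\beta>-1/2$; when $-1<\beta<-1/2$ the constant part turns negative and the vanishing condition becomes $j(m+1-j) = \tau(m+1-\tau)$ with $\tau := -(2\beta+1) \in (0,1)$. The only real solutions of $s(m+1-s) = \tau(m+1-\tau)$ are $s=\tau\in(0,1)$ and $s=m+1-\tau\in(m,m+1)$, neither an integer, so no cancellation occurs. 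The logarithmic extension in (\ref{eq:log^l}) is obtained by induction on $k$: applying $\Delta^2$ to $q_k r^{4\beta+4}(\log r)^k$ produces the expected top-order term $(T_{4\beta+2}\circ T_{4\beta+4})(q_k)\, r^{4\beta}(\log r)^k$ plus residuals of the form $(\text{polynomial})\cdot r^{4\beta}(\log r)^l$ with $l<k$, which are absorbed by the inductive hypothesis.

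For item~(2) the operator $T_0(q) = r^2 \Delta q$ has a nontrivial kernel (the harmonic polynomials), blocking the purely polynomial scheme, and the fix is to insert $\log r$ into the ansatz. A direct Leibniz computation using $\Delta \log r = 2/r^2$ gives, for $p \in \mathcal{P}_m$, an explicit formula for $\Delta^2(p\, r^2 \log r)$ whose leading contribution is a nonzero constant multiple of $p/r^2$, with additional $(\Delta p)\log r$, $(\Delta p)$, and $r^2(\Delta^2 p)\log r$ residuals. When $\deg p \le 1$ these residuals vanish and the relation immediately fixes $\tilde b_i$ and $\tilde c$. For the quadratic part of $f$, I decompose $a_{ij}x_ix_j = \hat a_{ij}x_ix_j + \frac{\mathrm{tr}(a)}{4}\,r^2$ with $\hat a$ traceless and choose $\tilde a_{ij}$ proportional to $\hat a_{ij}$, so that $\Delta(\tilde a_{ij}x_ix_j) = 0$ and only the clean $p/r^2$ term survives; the leftover constant $\mathrm{tr}(a)/4$ is cancelled by the pure-polynomial contribution $\Delta^2(a_0 r^4) = 192 a_0$, which fixes $a_0$ and makes it vanish exactly when the quadratic part of $f$ is traceless. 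The extension (\ref{eq:beta=00003D-1/2 and (logr)^k}) follows by the same induction, with the exponent $k+1$ reflecting that each step can produce one extra power of $\log r$ through the residual of type $(\Delta p)\log r$.

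The main obstacle in the plan is the non-cancellation argument for the eigenvalues of $T_{4\beta+2}$ when $-1<\beta<-1/2$: positivity is unavailable and one must instead exploit that $\tau$ lies strictly between two consecutive integers. Once this is in hand, everything in item~(2) reduces to bookkeeping with the explicit Leibniz formula.
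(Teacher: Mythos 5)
Your proposal is correct and follows essentially the same route as the paper's proof: reduce to homogeneous polynomials, use Lemma \ref{lem:(e.g.-)-Lee-Park} to invert $r^{2}\Delta$ plus a constant twice (the paper factors through the intermediate weight $r^{4\beta+2}$, which is exactly your composition $T_{4\beta+2}\circ T_{4\beta+4}$), and treat $\beta=-1/2$ by explicit $\log r$ computations followed by induction on the power of $\log r$. Your explicit non-resonance check $j(m+1-j)=\tau(m+1-\tau)$ for $-1<\beta<-1/2$ and the traceless/trace splitting of the quadratic part in item (2) are just slightly more streamlined write-ups of steps the paper handles by direct assertion and by monomial-by-monomial computation, so the substance is the same.
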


\begin{proof}
See the appendix.
\end{proof}
Suppose that $u$ is a desired weak solution of (\ref{eq: PDE in R^4}).
We consider
\[
\Delta^{2}u(x)=e^{4u(x)},\ \ x\in B_{1}(0).
\]
Then by the regularity theory in the proof of Theorem \ref{thm:Subcritical},
$u-\beta\log r\in C^{\infty}(B_{1}(0)-\{0\})\cap C^{4\beta+4-\epsilon}(B_{1}(0))$
for any $\epsilon>0$. Here $\beta=\beta_{0}$. Let $w=u(x)-\beta\log r$.
Then
\begin{equation}
\Delta^{2}w(x)=e^{4w(x)}|x|^{4\beta}.\label{eq:PDE in Asymptotic Expansion}
\end{equation}

\begin{thm}
\label{thm:Asymptotic expansion at singular points}Suppose that $w$
is a solution of (\ref{eq:PDE in Asymptotic Expansion}) in $B_{1}$
and $w\in C^{\infty}(B_{1}(0)-\{0\})\cap C^{4+4\beta-\epsilon}(B_{1}(0)),\forall\epsilon>0$.
If 
\begin{description}
\item [{Case$\ $1}] $-\frac{k+1}{k+2}<\beta<-\frac{k}{k+1}$, for some
$k=0,1,2,\cdots,$ then

\[
w=\sum_{l=1}^{k+1}q_{l}(x)r^{4l(\beta+1)}+\psi(x),
\]
where $\psi(x)\in C^{4,\gamma}$ and $q_{l}$ are polynomials.
\item [{Case$\ $2}] $\beta=-\frac{2k}{2k+1}$, for some $k=0,1,2,\cdots,$
then

\[
w=\sum_{l=1}^{2k+2}q_{l}(x)r^{4l(\beta+1)}+\psi(x),
\]
where $\psi(x)\in C^{4,\gamma}$ and $q_{l}$ are polynomials. 
\item [{Case$\ $3}] $\beta=-\frac{2k-1}{2k}$, for some $k=0,1,2,\cdots,$
then
\[
w=\sum_{l=1}^{2k}q_{l}(x)r^{4l(\beta+1)}P_{l}(\log r)+\psi(x).
\]
where $\psi(x)\in C^{4,\gamma}$ , $q_{l}$ and $P_{l}$ are polynomials. 
\end{description}
\end{thm}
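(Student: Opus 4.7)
The strategy is iterative peeling: using Lemma \ref{lem:polynomial lemma}, I subtract off explicit singular building blocks from $w$ one scale at a time, bootstrapping the regularity of the residual at each step. Since $4+4\beta>0$, the hypothesis $w\in C^{4+4\beta-\epsilon}(B_1(0))$ ensures that $c_0:=w(0)$ is well defined and $w-c_0=O(r^{4+4\beta-\epsilon})$ at $0$. The leading part of the source in (\ref{eq:PDE in Asymptotic Expansion}) is then $e^{4c_0}r^{4\beta}$, and Lemma \ref{lem:polynomial lemma} supplies a function $u_1$, of the form $q_1(x)\,r^{4(\beta+1)}$ (case 1 of the lemma) when $\beta\neq -\tfrac12$, and of the form $r^{4(\beta+1)}\,P_1(\log r)$ (case 2 of the lemma) when $\beta=-\tfrac12$, such that $\Delta^2 u_1=e^{4c_0}r^{4\beta}$. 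The residual $w_1:=w-u_1$ then solves a bilaplacian equation whose right-hand side has strictly better decay at the origin, and an application of the Green's function representation combined with Morrey embedding (as in the regularity proof of Theorem \ref{thm:Subcritical}) lifts $w_1$ into a H\"older space with exponent larger by $4(\beta+1)$.

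\textbf{The iteration.} Repeating this procedure, at step $l$ the residual $w_{l-1}:=w-\sum_{j<l}u_j$ satisfies an equation of the form $\Delta^2 w_{l-1}=F_l(x)\,r^{4l(\beta+1)-4}+(\text{higher order})$, where $F_l$ is a polynomial obtained by expanding $\exp(4(c_0+\sum_{j<l}u_j))$ and collecting the terms at the appropriate scale. Rewriting $r^{4l(\beta+1)-4}=r^{4\beta_l}$ with effective exponent $\beta_l:=l(\beta+1)-1$, Lemma \ref{lem:polynomial lemma} produces $u_l$ of pure polynomial type when $\beta_l\neq -\tfrac12$, and with a logarithmic factor when $\beta_l=-\tfrac12$, i.e.\ when $\beta+1=\tfrac{1}{2l}$. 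The iteration is terminated at the first index $L$ with $4(L+1)(\beta+1)>4$, after which the residual source is bounded and standard elliptic regularity places $\psi:=w_L$ in $C^{4,\gamma}$.

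\textbf{Matching the three cases.} The trichotomy in the statement reflects when, if ever, the resonance $\beta+1=\tfrac{1}{2l}$ occurs for some integer $l\leq L$. In Case 1 the open interval $\beta+1\in(\tfrac{1}{k+2},\tfrac{1}{k+1})$ contains no number of the form $\tfrac{1}{2l}$, so no logarithms appear and the iteration halts at $l=k+1$. In Case 2 the equality $\beta+1=\tfrac{1}{2k+1}$ with $2k+1$ odd prevents $l(\beta+1)=\tfrac12$ for any integer $l$; again no logarithms, and the iteration runs until $l=2k+2$, the first step for which $4l(\beta+1)>4$. In Case 3 one has $\beta+1=\tfrac{1}{2k}$, so $l(\beta+1)=\tfrac12$ exactly at $l=k$, triggering the logarithmic part of Lemma \ref{lem:polynomial lemma}; once introduced, the logarithmic factors propagate with growing degree into all later $u_l$, giving the polynomials $P_l(\log r)$ in the statement, and the iteration terminates at $l=2k$.

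\textbf{Main obstacle.} The principal technical issue is the bookkeeping of logarithmic powers in Case 3: once $u_k$ carries a $\log r$ factor, subsequent residuals contain $(\log r)^j$ terms with growing $j$, and one must carefully track the log-degree at each step using the second part of Lemma \ref{lem:polynomial lemma}, which solves sources with a $(\log r)^k$ factor by expressions with a $(\log r)^{k+1}$ factor. A secondary point is verifying at each step that the polynomial source $F_l$ obtained by expanding $\exp(4\sum_{j<l}u_j)$ genuinely meets the hypotheses of Lemma \ref{lem:polynomial lemma}; this follows combinatorially from the previous iterates being polynomials (possibly multiplied by $\log r$-polynomials) of the specified form, but requires care in Case 3 where polynomial and logarithmic orders interact.
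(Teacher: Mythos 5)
Your overall peeling strategy coincides with the paper's, and your accounting of when the resonance $l(\beta+1)=\tfrac12$ forces logarithms (and of where the iteration stops in each of the three cases) is correct. However, there is a genuine gap at the first step which propagates through the whole scheme: you subtract only the constant $c_{0}=w(0)$ and the singular blocks $u_{j}$, and you build the polynomial source $F_{l}$ solely from $\exp(4(c_{0}+\sum_{j<l}u_{j}))$, thereby ignoring the nonconstant Taylor part of the regular portion of $w$. Already when $4+4\beta>1$ the claim $w-c_{0}=O(r^{4+4\beta-\epsilon})$ is false; one only has $w-g_{0}=o(r^{4+4\beta-\epsilon})$ for a cubic Taylor polynomial $g_{0}$. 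Consequently the source $e^{4w}r^{4\beta}$ contains terms such as $4e^{4c_{0}}(\nabla w(0)\cdot x)\,r^{4\beta}\sim r^{4\beta+1}$ and quadratic and cubic analogues $r^{4\beta+2}$, $r^{4\beta+3}$, which for $\beta<-\tfrac14$ are not H\"older continuous, are more singular than your next block $F_{2}r^{8\beta+4}$ whenever $\beta>-\tfrac34$, and are never produced by expanding $\exp(4(c_{0}+\sum_{j}u_{j}))$. Your iteration therefore stalls: after removing all of your $u_{l}$ the right-hand side still contains these non-H\"older terms, so the concluding elliptic-regularity step does not place the residual in $C^{4,\gamma}$. (Even when $\beta\le-\tfrac34$, so that $w-c_{0}=O(r^{4(\beta+1)-\epsilon})$ is initially true, the same problem reappears after the first bootstrap, once the residual has gained enough regularity to carry nontrivial linear or higher Taylor coefficients.) These contributions are precisely the reason the coefficients $q_{l}$ in the statement are polynomials rather than constants.

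The repair is the mechanism the paper builds into every stage: at step $m$ subtract a degree-$\le3$ polynomial $g_{m}$ so that $\bar{w}_{m}=w_{m}-g_{m}=o(r^{4(m+1)(\beta+1)-\epsilon})$, approximate $e^{4g_{m}}$ (more generally the accumulated factor $e^{4s_{m}}$) by a polynomial up to a H\"older-continuous error, and feed that polynomial datum $f$ into Lemma \ref{lem:polynomial lemma} to produce the block $q\,r^{4(l+1)(\beta+1)}$ (or its logarithmic variant at the resonant scale). This Taylor subtraction must be repeated after every bootstrap, since each regularity gain introduces new nonconstant Taylor coefficients that generate singular terms at the next scale. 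With that modification, your scale bookkeeping, the termination criteria, and your treatment of the log-degree growth in Case 3 go through essentially as you describe.
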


We break the proof into 3 cases according to the value of $\beta$.
\begin{proof}
[Proof of Case 1,] $-\frac{k+1}{k+2}<\beta<-\frac{k}{k+1}$. Since
$w\in C^{4\beta+4-\epsilon}$, there exists a polynomial $g_{0}$
with degree not exceeding $3$ such that $\bar{w}:=w-g_{0}=o(r^{4\beta+4-\epsilon})$.
Furthermore,
\begin{equation}
\Delta^{2}\bar{w}=e^{4g_{0}}(e^{4\bar{w}}-1)r^{4\beta}+e^{4g_{0}}r^{4\beta}.\label{eq:=00005Cbarw; proof of case 1}
\end{equation}
Since $e^{4g_{0}}$ is smooth, we find a polynomial $\phi_{0}(x)$
such that 
\begin{equation}
(\phi_{0}(x)-e^{4g_{0}(x)})r^{4\beta}=O(r^{\gamma}),\label{eq:connecting eq ; proof case 1}
\end{equation}
 where $\gamma>0$. In fact, since $\beta\not=-\frac{1}{2}$, we pick
$\gamma=4\beta+4$. By Lemma \ref{lem:polynomial lemma}, there is
a polynomial $q_{0}(x)$ such that 
\begin{equation}
\Delta^{2}(q_{0}(x)r^{4\beta+4})=\phi_{0}(x)r^{4\beta}.\label{eq: finding q_0}
\end{equation}
Then, from (\ref{eq:=00005Cbarw; proof of case 1}) ,(\ref{eq:connecting eq ; proof case 1})
and (\ref{eq: finding q_0}), we have 
\begin{align}
\Delta^{2}(\bar{w}-q_{0}(x)r^{4\beta+4}) & =e^{4g_{0}}(e^{4\bar{w}}-1)r^{4\beta}+\left(e^{4g_{0}}-\phi_{0}\right)r^{4\beta}\label{eq:asymp proof bar w_0}\\
 & =o(r^{8\beta+4-\epsilon})+O(r^{\gamma}).\nonumber 
\end{align}
If $\beta>-\frac{1}{2}$, then the right hand side of (\ref{eq:asymp proof bar w_0})
is $\holder$ continuous and by the elliptic regularity theory, we
have finished the proof.

If $\beta<-\frac{1}{2}$, let $w_{1}=\bar{w}-q_{0}(x)r^{4\beta+4}.$
Then 
\begin{equation}
\Delta^{2}w_{1}\in L^{p},1<p<-\frac{4}{4+8\beta}.\label{eq: w_1; proof of case 1}
\end{equation}
By the elliptic regularity theory, $w_{1}\in W^{4,p}\hookrightarrow C^{8\beta+8-\epsilon}$.
 Then we start the iteration procedure. At each step $m\leq j\leq k$,
suppose that there are functions $w_{m}$, $\bar{w}_{m}$, $s_{m}$,
$Q_{m}$, $g_{m}$, $R_{m}$, $\xi_{m}$, $\phi_{m}^{l}(x)$ such
that:\renewcommand{\labelenumi}{\roman{enumi}).}
\begin{enumerate}
\item $w_{m}\in C^{4(m+1)(\beta+1)-\epsilon}$ admits the following expansion
\begin{align}
w_{m} & =\bar{w}_{m-1}-\sum_{l=0}^{k}q_{m-1}^{l}r^{4(l+1)(\beta+1)}\nonumber \\
 & =\bar{w}_{m-1}-Q_{m-1}(x),\label{w_j expression =00005Cbetanot=00003D-1/2}
\end{align}
where $Q_{m-1}(x)=\sum_{l=0}^{k}q_{m-1}^{l}r^{4(l+1)(\beta+1)}$ and
$s_{m}=s_{m-1}+Q_{m-1}$ , $s_{0}=0$.
\item $g_{m}$ are polynomials with $deg(g_{m})\leq3$, such that 
\[
\bar{w}_{m}:=w_{m}-g_{m}=o(r^{4(m+1)(\beta+1)-\epsilon}).
\]
\item $\bar{w}_{m}$ satisfies: 
\begin{align}
\Delta^{2}\bar{w}_{m} & =e^{4s_{m-1}}(e^{4(\bar{w}_{m}+g_{m}+\sum_{l=0}^{k}q_{m-1}^{l}r^{4(l+1)(\beta+1)})}-1)r^{4\beta}+\xi_{m}(x)\label{eq:=00005CDelta^2 w_j =00005Cbeta=00005Cnpt=00003D-1/2}\\
 & =e^{4s_{m}}(e^{4\bar{w}_{m}}-1)r^{4\beta}+e^{4s_{m-1}}[e^{4(g_{m}+\sum_{l=0}^{k}q_{m-1}^{l}r^{4(l+1)(\beta+1)})}-1]e^{4\beta}+\xi_{m}(x)\nonumber \\
 & =o(r^{4[(m+2)\beta+m+1]-\epsilon})+R_{m}(x)+\xi_{m}(x).\label{eq:=00005CDelta^2 w_j remaining terms}
\end{align}
where
\begin{align}
R_{m}(x) & =e^{4s_{m-1}}[e^{4(g_{m}+\sum_{l=0}^{k}q_{m-1}^{l}r^{4(l+1)(\beta+1)})}-1]e^{4\beta},\label{eq:R_m}
\end{align}
and $\xi_{m}(x)$ is a $\holder$ continuous functions. We observe
that $R_{m}(x)$ can be expanded as
\begin{equation}
R_{m}(x)=\sum_{l=0}^{k}\phi_{m}^{l}(x)r^{4[(l+1)\beta+l]}+O(r^{4[(k+2)\beta+k+1]}),\label{eq:R_m expansion}
\end{equation}
where $\phi_{m}^{l}(x)$ are polynomials.
\end{enumerate}
At step $j\leq k$, by Lemma \ref{lem:polynomial lemma}, for each
$l$, there exist a polynomial $q_{j}^{l}(x)$ such that 
\[
\Delta^{2}(q_{j}^{l}(x)r^{4(l+1)(\beta+1)})=\phi_{j}^{l}(x)r^{4[(l+1)\beta+l]}.
\]
Let $Q_{j}(x)=\sum_{l}q_{j}^{l}r^{4(l+1)(\beta+1)}$ and $w_{j+1}=\bar{w}_{j}(x)-Q_{j}(x)$.
Then by (\ref{eq:=00005CDelta^2 w_j =00005Cbeta=00005Cnpt=00003D-1/2})
and (\ref{eq:R_m expansion})
\begin{align}
\Delta^{2}w_{j+1} & =o(r^{4[(j+2)\beta+j+1]-\epsilon})+R_{j}(x)-\sum_{l=0}^{k}\phi_{j}^{l}r^{4[(l+1)\beta+l]}+\xi_{j}\label{eq:w_j+1}\\
 & =o(r^{4[(j+2)\beta+j+1]-\epsilon})+O(r^{4[(k+2)\beta+k+1]})+\xi_{j}\nonumber \\
 & =o(r^{4[(j+2)\beta+j+1]-\epsilon})+\xi_{j+1},\nonumber 
\end{align}
where $\xi_{j+1}(x)$ is a $\holder$ continuous function. Let $s_{j+1}=s_{j}+Q_{j}$.
If $j<k$, the right hand side of (\ref{eq:w_j+1}) is in $L^{p}$
for $1<p<-\frac{1}{(j+2)\beta+j+1}$. Thus $w_{j+1}\in W^{4,p}$ by
the elliptic regularity theory and hence $w_{j+1}\in C^{4(j+2)(\beta+1)-\epsilon}$.
Therefore, there exists a corresponding $g_{j+1}$ such that 
\[
\bar{w}_{j+1}:=w_{j+1}-g_{j+1}=o(r^{4(j+2)(\beta+1)-\epsilon}).
\]
Note $g_{j+1}$ has degree less than or equal to 3. Then, by (\ref{eq:=00005CDelta^2 w_j =00005Cbeta=00005Cnpt=00003D-1/2}),
\begin{align}
\Delta^{2}\bar{w}_{j+1} & =\Delta^{2}w_{j+1}=\Delta^{2}(\bar{w}_{j}-Q_{j})\label{eq:=00005Cbarw_j+1 ,revised the induction part, sec 7}\\
 & =e^{4s_{j}}(e^{4\bar{w}_{j}}-1)r^{4\beta}+e^{4s_{j-1}}(e^{4(g_{j}+Q_{j-1})}-1)r^{4\beta}+\xi_{j}(x)-\sum_{l=0}^{k}\phi_{j}^{l}(x)r^{4[(l+1)\beta+l]}\nonumber \\
 & =e^{4s_{j}}(e^{4\bar{w}_{j}}-1)r^{4\beta}+\left(R_{j}-\sum_{l=0}^{k}\phi_{j}^{l}(x)r^{4[(l+1)\beta+l]}\right)r^{4\beta}+\xi_{j}\nonumber \\
 & =e^{4s_{j+1}}(e^{4\bar{w}_{j+1}}-1)r^{4\beta}+e^{4s_{j}}(e^{4(g_{j+1}+Q_{j})}-1)r^{4\beta}+O(r^{4[(k+2)\beta+k+1]})+\xi_{j}\nonumber \\
 & =e^{4s_{j+1}}(e^{4\bar{w}_{j+1}}-1)r^{4\beta}+R_{j+1}+\xi_{j+1}\nonumber \\
 & =o(r^{4[(j+3)\beta+j+2]-\epsilon})+R_{j+1}+\xi_{j+1}.\nonumber 
\end{align}
By expanding $\bar{w}_{j+1}$ as in (\ref{eq:=00005Cbarw_j+1 ,revised the induction part, sec 7}),
$R_{j+1}(x)$ is in the form of (\ref{eq:R_m}). If $j<k$, then we
apply the above argument again for $m=j+1$. We iterate until $j=k$
when the right hand side of (\ref{eq:w_j+1}) is $\holder$ continuous.
Thus, by the elliptic regularity theory, $w_{k+1}\in C^{4,\gamma}$
for some $\gamma>0$. This clearly gives the asymptotic expansion.
\end{proof}
\begin{proof}
[Proof of Case 2,] $\beta=\frac{1}{2k+1}-1$. This case is similar
to Case 1 with minor changes. We assume that $k\geq1$. The beginning
steps are the same as in the first case. We use iteration. In this
case, we assume that $w_{1}\in C^{8\beta+8}$. If $\beta=\frac{1}{2k}-1$,
the remaining term like (\ref{eq:=00005CDelta^2 w_j remaining terms})
has the following form
\[
R_{1}(x)=\phi_{0}r^{4\beta}+\phi_{1}r^{8\beta+4}+\cdots+\phi_{2k-1}r^{8k\beta+8k-4}+\phi_{2k}+O(r^{\gamma})
\]
where $\phi_{l}$ are polynomials. 
\[
4(l+1)(\beta+1)-4\not=-2,\ l=0,1,\cdots,2k.
\]
So by Lemma \ref{lem:polynomial lemma}, there exist polynomials $q_{1}^{l}(x)$
and $Q_{1}(x)=\sum_{l}q_{1}^{l}(x)r^{4(l+1)(\beta+1)}$ such that
\[
\Delta^{2}Q_{1}(x)=\sum_{l=0}^{2k}\phi_{l}(x)r^{4(l+1)\beta+4l}.
\]
The iteration procedures are almost the same as in Case 1. Note, however,
at the final step, when $j=2k-1$. By (\ref{eq:=00005Cbarw_j+1 ,revised the induction part, sec 7}),
\[
\Delta^{2}w_{2k}=o(r^{-\epsilon})+\xi_{2k}(x),
\]
and $w_{2k}\in C^{4-\epsilon}$. Then there exists a degree 3 polynomial
$g_{2k}$ such that $\bar{w}_{2k}=w_{2k}-g_{2k}\in C^{4-\epsilon}$.
Note that 
\begin{align}
\Delta^{2}\bar{w}_{2k} & =e^{4s_{2k-1}}\left(\exp\left(4\bar{w}_{2k}+g_{2k}+Q_{2k-1}\right)-1\right)r^{4\beta}+\xi_{2k-1}\label{eq beta =00003D1/2k+1 -1, w_2k}\\
 & =e^{4s_{2k}}(e^{4\bar{w}_{2k}}-1)r^{4\beta}+r^{4\beta}e^{4s_{1}}\left(e^{Q_{2k-1}}-1\right)+\text{\ensuremath{\xi}}_{2k-1}\nonumber \\
 & =o(r^{4+4\beta-\epsilon})+R_{2k}+\xi_{2k-1}.\nonumber 
\end{align}
 We have the expansion 
\[
R_{2k}=\sum_{l=0}^{2k}\phi_{l}^{2k}r^{4((l+1)\beta+l)}+O(r^{\frac{4}{2k+1}})
\]
Solve 
\begin{equation}
\Delta^{2}Q_{2k}=\sum_{l=0}^{2k}\phi_{l}^{2k}r^{4((l+1)\beta+l)},\label{eq:Q_2k, connecting eq}
\end{equation}
by Lemma \ref{lem:polynomial lemma}. Let $w_{2k+1}=\bar{w}_{2k}-Q_{2k}$.
By (\ref{eq:Q_2k, connecting eq})
\[
\Delta^{2}w_{2k+1}=o(r^{4+4\beta-\epsilon})+O(r^{\frac{4}{2k+1}})+\xi_{2k-1}.
\]
Hence, the right hand side of (\ref{eq beta =00003D1/2k+1 -1, w_2k})
is $\holder$ continuous. By the regularity theory of elliptic equations,
$w_{2k+1}\in C^{4,\gamma}$ for some $\gamma>0$ and we conclude the
proof. 
\end{proof}
\begin{proof}
[Proof of Case 3,] $\beta=\frac{1}{2k}-1$ . As in previous cases,
we replace $w$ by $\bar{w}=w-g_{0}(x)$ such that $\bar{w}=o(r^{4\beta+4-\epsilon})$. 

If $k=1$, we use Taylor expansion of $e^{4g_{0}(x)}$ at $0$ to
get 
\[
(e^{4g_{0}(x)}-1)=f(x)r^{-2}+O(1),
\]
where $f(x)$ is a polynomial with degree at most 2. Then
\begin{align}
\Delta^{2}\bar{w} & =e^{4\bar{w}+4g_{0}}r^{4\beta}\label{eq:first step beta=00003D-1/2}\\
 & =e^{4g_{0}}(e^{4\bar{w}}-1)r^{4\beta}+(e^{4g_{0}})r^{-2}\nonumber \\
 & =o(r^{8\beta+4-\epsilon})+f(x)r^{4\beta}+\xi(x)\nonumber \\
 & =o(r^{-\epsilon})+f(x)r^{-2}+\xi(x)\nonumber 
\end{align}
where $\xi$ is a smooth function. As in (\ref{eq: finding q_0}),
there exists a function $q_{0}(x)$ by (\ref{eq:form beta=00003D-1/2})
such that 
\begin{equation}
\Delta^{2}(q_{0}r^{2})=f(x)r^{-2}.\label{eq:q_0r^2 connecting}
\end{equation}
By (\ref{eq:first step beta=00003D-1/2}) and (\ref{eq:q_0r^2 connecting}),
\begin{equation}
\Delta^{2}(\bar{w}-q_{0}(x)r^{2})=o(r^{-\epsilon})+\xi(x).\label{eq:Delta^2=00005Cbarw-q_0r^2 =00005Cbeta=00003D-1/2}
\end{equation}
Apply the elliptic regularity theory to (\ref{eq:Delta^2=00005Cbarw-q_0r^2 =00005Cbeta=00003D-1/2}),we
have
\[
w_{1}=\bar{w}-q_{0}(x)r^{2}\in C^{4-\epsilon}.
\]
Suppose that $\bar{w}_{1}=w_{1}-g_{1}(x)$ such that $g_{1}$ is a
polynomial with degree 3 and $\bar{w}_{1}=o(r^{4-\epsilon})$. Then
\begin{align*}
\Delta^{2}\bar{w}_{1} & =\Delta^{2}(w_{1})=\Delta^{2}(\bar{w}-q_{0}r^{2})\\
 & =e^{4w}r^{-2}-f(x)r^{-2}\\
 & =e^{4(g_{1}+q_{0}r^{2}+g_{0})}(e^{4\bar{w}_{1}}-1)r^{-2}+e^{4g_{0}+4g_{1}}(e^{4q_{0}r^{2}}-1)r^{-2}\\
 & +e^{4g_{0}}(e^{4g_{1}}-1)r^{-2}+\xi(x)\\
 & =o(r^{2-\epsilon})+c\log r+R_{1}(x)+\xi(x).
\end{align*}
Here, the remainder term 
\begin{align*}
R_{1}(x) & =e^{4g_{0}}(e^{4g_{1}}-1)r^{-2}\\
 & =f_{1}(x)r^{-2}+O(r),
\end{align*}
where $f_{1}(x)$ is a quadratic polynomial. So 
\[
\Delta^{2}\bar{w}_{1}=\xi_{1}(x)+c\log r+f_{1}(x)r^{-2},
\]
for some $\holder$ continuous $\xi_{1}$. Pick $c_{1}\in\mathbb{R}$
and $q_{1}(x)$ a function in (\ref{eq:form beta=00003D-1/2}) such
that 

\[
\Delta^{2}c_{1}r^{4}\log r=c\log r-c_{2},
\]
and 
\[
\Delta^{2}[q_{1}(x)r^{2}]=f_{1}(x)r^{-2}.
\]
We obtain that
\[
\Delta^{2}(\bar{w}_{1}-c_{1}r^{4}\log r-q_{1}(r)r^{2})=\xi_{2}(x),
\]
where $\xi_{2}$ is a $\holder$ continuous function. This shows that
\[
w=(a_{ij}x_{i}x_{j}+b_{i}x_{i}+c)r^{2}\log r+c_{1}r^{4}\log r+\psi(x),
\]
where $\psi(x)\in C^{4,\gamma}$ for $\gamma>0$. Particularly, it
concludes the case where $\beta=-1/2$. We are done.

For $\beta\not=-1/2$, there exists $q_{0}(x)$ a polynomial satisfying
(\ref{eq: finding q_0}). Then 
\[
w_{1}=\bar{w}-q_{0}(x)r^{4\beta+4}\in C^{8\beta+8-\epsilon}.
\]
There exists a polynomial $g_{1}(x)$ such that $\bar{w}_{1}=w_{1}-g_{1}=o(r^{8\beta+8-\epsilon})$.
We expand $\Delta^{2}\bar{w}_{1}$ in the form of (\ref{eq:=00005CDelta^2 w_j =00005Cbeta=00005Cnpt=00003D-1/2}).
Note that the remaining term $R_{1}(x)$ in (\ref{eq:=00005CDelta^2 w_j remaining terms})
has the following form:
\begin{align}
R_{1}(x) & =\sum_{l=1}^{2k}\phi_{l}r^{4l(\beta+1)-4}+O(r^{\gamma}).\label{eq:R_1 =00005Cbeta=00003D1/2k-1}\\
 & =\sum_{l=1}^{2k-1}\phi_{l}r^{\frac{2l}{k}-4}+\phi_{2k}+O(r^{\gamma}).\nonumber 
\end{align}
In (\ref{eq:R_1 =00005Cbeta=00003D1/2k-1}), $\phi_{k}(x)r^{\frac{2k}{k}-4}=\phi_{k}(x)r^{-2}.$
We assume $\text{deg }\phi_{k}\leq2$, since $p_{m}(x)r^{-2}$ is
$\holder$ continuous for any homogeneous polynomial $p_{m}\in\mathcal{P}_{m}$
for $m\geq3$. We can find polynomials $q_{1}^{l}(x)$ for $l\not=k$,
such that 
\[
\Delta^{2}(q_{1}^{l}(x)r^{4l(\beta+1)})=\phi_{l}(x)r^{4l(\beta+1)-4}.
\]
There exists a function $q_{1}^{k}(x)$ in the form of (\ref{eq:form beta=00003D-1/2})
such that $\Delta^{2}(q_{1}^{k}(x)r^{2})=\phi_{k}r^{-2}$. Let 
\[
Q_{1}(x)=\sum_{l=1}^{2k-1}q_{1}^{l}(x)r^{4l(\beta+1)}.
\]
Let $w_{2}=\bar{w}_{1}-Q_{1}(x)$. Then
\[
\Delta^{2}w_{2}=o(r^{16\beta+12-\epsilon})+\xi_{2}(x).
\]
We iterate as in the proof of Case 1. Suppose that $w_{l}\in C^{4(l+1)(\beta+1)}$,
$l\leq j\leq2k$ and $w_{j}=\bar{w}_{j-1}-Q_{j-1}$, for
\[
Q_{j-1}=\sum_{l=1}^{2k}q_{j-1}^{l}r^{4l(\beta+1)}P_{j-1}^{l}(\log r),
\]
where $q_{j-1}^{l}$ and $P_{j-1}^{l}$ are polynomials. For $w_{j}$
there is a degree 3 polynomial $g_{j}$ such that $\bar{w}_{j}=w_{j}-g_{j}$
and $\bar{w}_{j}=o(r^{4(l+1)(\beta+1)-\epsilon})$. Thus
\begin{align}
\Delta^{2}\bar{w}_{j} & =o(r^{4(j+1)(\beta+1)-\epsilon})+R_{j}(x)+\xi_{j-1}(x).\label{eq:=00005Cbarw_j case 3}
\end{align}
Here $R_{j}(x)$ can be written as
\begin{align}
R_{j}(x) & =\sum_{l\not=k,1\leq l\leq2k}\phi_{j}^{l}(x)\bar{P}_{j}^{l}(\log r)r^{\frac{2l}{k}-4}+\phi_{j}^{k}(x)r^{-2}\bar{P}_{j}^{k}(\log r)+O(r^{\gamma}),\label{eq:R_j case 3,sec 7}
\end{align}
where $\phi_{j}^{l},\bar{P}_{j}^{l}$ are polynomials. We may assume
that $\text{deg }\phi_{j}^{k}\leq2$ because $p_{m}(x)r^{-2}\bar{P}_{j}^{k}(\log r)$
is $\holder$ continuous for any homogeneous polynomial $p_{m}\in\mathcal{P}_{m}$,
$m\geq3$. By Lemma \ref{lem:polynomial lemma}, we can find 
\[
Q_{j}(x)=\sum_{l=1}^{2k}q_{j}^{l}(x)P_{j}^{l}(\log r)r^{4l(\beta+1)},
\]
 such that 
\begin{equation}
\Delta^{2}Q_{j}=\sum_{l\not=k,1\leq l\leq2k}\phi_{j}^{l}(x)\bar{P}_{j}^{l}(\log r)r^{\frac{2l}{k}-4}+\phi_{j}^{k}(x)r^{-2}\bar{P}_{j}^{k}(\log r).\label{eq:Q_j ,case 3; sec 7}
\end{equation}
Then by (\ref{eq:=00005Cbarw_j case 3}),(\ref{eq:R_j case 3,sec 7})
and (\ref{eq:Q_j ,case 3; sec 7}),
\begin{equation}
\Delta^{2}(\bar{w}_{j}-Q_{j})=o(r^{4(j+2)(\beta+1)-4-\epsilon})+\xi_{j}(x).\label{eq:=00005Cbarw_j+1; case 3}
\end{equation}
Let $w_{j+1}=\bar{w}_{j}-Q_{j}$. Then, $w_{j+1}$ is in $W^{4,p}$
for $1<p<-\frac{1}{(j+2)\beta+j+1}$ . The iteration procedure does
not stop until $j=2k-1$ when the right hand side of (\ref{eq:=00005Cbarw_j+1; case 3})
is $\holder$ continuous. Again, we use the elliptic regularity theory
to show $w_{2k}\in C^{4,\gamma}$ and the proof is complete. 
\end{proof}

\section{Uniqueness result with 2 singularities\protect 
}In this section, we give a proof of Theorem \ref{thm:Uniquness result}.
Let $M=S^{4}$. Let $g_{0}$ be the standard metric on 4-sphere. Let
$(M,g_{0},D,g_{1})$ be the conic sphere with divisor $D=\beta_{0}p_{0}+\beta_{1}p_{1}$,
then $\int_{M}Q_{g_{1}}dV_{g_{1}}=8\pi^{2}(2+\beta_{0}+\beta_{1})$
by Proposition \ref{prop:(Gauss-Bonnet-Chern)-Suppose-tha}. Note
if $w$ is a solution on the sphere with divisor $D$, we have
\[
P_{S^{4}}w+6=3(2+\beta_{0}+\beta_{1})e^{4w}.
\]
Here we have normalized the equation such that the conic sphere has
the same volume as that of a standard 4-sphere. Let $\bar{k}_{g}=3(2+\beta_{0}+\beta_{1})$.
We only consider solutions such that 
\[
w-\sum_{i=0,1}\eta_{i}(x)\beta_{i}\log|x-p_{i}|\in H^{2}(dV_{0}),
\]
where $p_{1}$ and $p_{2}$ are two points on the sphere and $\eta_{i}(x)$
are cut off functions in the neighborhood of $p_{i}$ in (\ref{eq:singular term}),
$i=0,1$ respectively. By a conformal transform on the sphere, we
may assume that $p_{1}$ and $p_{2}$ are antipodes $x_{S}$ and $x_{N}$,
respectively. By a stereographic projection from $x_{N}$, we obtain
the equation on $\mathbb{R}^{4}$ 
\begin{equation}
\Delta^{2}u=\bar{k}_{g}e^{4u}.\label{eq:equation in sec 8,}
\end{equation}
We state two lemmas that describe the asymptotic behavior of $u$. 
\begin{lem}
$\Delta u-2\beta_{0}\frac{1}{|x|^{2}}=-\frac{\bar{k}_{g}}{4\pi^{2}}\int_{\mathbb{R}^{4}}\frac{e^{4u(y)}}{|x-y|^{2}}dy-C_{1}$
where $C_{1}\geq0$ is a constant. \label{lem:Lemma 2.2}
\end{lem}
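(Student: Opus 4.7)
The plan is to invert the bi-Laplacian $\Delta^{2}$ in two Green's-function stages and then identify the remaining harmonic defect via Liouville's theorem. First I would set
\[
F(x) := -\frac{\bar{k}_{g}}{4\pi^{2}}\int_{\mathbb{R}^{4}}\frac{e^{4u(y)}}{|x-y|^{2}}\,dy.
\]
This integral converges absolutely for every $x\in\mathbb{R}^{4}$: the conic volume $\int_{\mathbb{R}^{4}}e^{4u}\,dx = \mathrm{vol}(S^{4},g_{0}) = 8\pi^{2}/3$ is finite by the normalization, the origin contributes like $|y|^{4\beta_{0}}$, and the tail decays like $|y|^{-(8+4\beta_{1})}$, both integrable against $|x-y|^{-2}$ since $\beta_{0},\beta_{1}>-1$. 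Because $\frac{1}{4\pi^{2}|x|^{2}}$ is the fundamental solution of $-\Delta$ on $\mathbb{R}^{4}$, one has $-\Delta F = -\bar{k}_{g}e^{4u}$ in $\mathcal{D}'(\mathbb{R}^{4})$.

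Next, set $H(x) := \Delta u(x) - \frac{2\beta_{0}}{|x|^{2}} - F(x)$ and show that $H$ is harmonic on all of $\mathbb{R}^{4}$. Away from $0$, the equation $\Delta^{2}u = \bar{k}_{g}e^{4u}$ together with $\Delta(|x|^{-2}) = 0$ gives $\Delta H\equiv 0$ classically. At $0$, the regularity of Section 5 and the expansion in Theorem \ref{thm:Asymptotic expansion at singular points} give $u = \beta_{0}\log|x|+v$ with $v$ locally $C^{4,\gamma}$, so $\Delta u - \frac{2\beta_{0}}{|x|^{2}} = \Delta v$ is bounded near $0$ and $F$ is continuous at $0$. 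Distributionally, the Dirac masses $\Delta^{2}(\beta_{0}\log|x|) = -8\pi^{2}\beta_{0}\delta_{0}$ and $\Delta(2\beta_{0}|x|^{-2}) = -8\pi^{2}\beta_{0}\delta_{0}$ cancel exactly, so $\Delta H = 0$ in $\mathcal{D}'(\mathbb{R}^{4})$; Weyl's lemma then promotes $H$ to a genuine harmonic function on $\mathbb{R}^{4}$.

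To conclude, I would compute the spherical mean of $H$ at infinity. The leading asymptotic $u(x)\sim -(2+\beta_{1})\log|x|$ gives $\Delta u(x) = -\frac{2(2+\beta_{1})}{|x|^{2}}+o(|x|^{-2})$. Newton's shell theorem $\fint_{\partial B_{R}}|x-y|^{-2}\,dS(x) = R^{-2}$ for $|y|<R$, combined with $\int e^{4u}\,dx = 8\pi^{2}/3$ and $\bar{k}_{g} = 3(2+\beta_{0}+\beta_{1})$, yields $\fint_{\partial B_{R}}F\,dS = -\frac{2(2+\beta_{0}+\beta_{1})}{R^{2}}+o(R^{-2})$. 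Together with $\fint_{\partial B_{R}}\frac{2\beta_{0}}{|x|^{2}}\,dS = \frac{2\beta_{0}}{R^{2}}$, the three $R^{-2}$ coefficients cancel exactly, so $\fint_{\partial B_{R}}H\,dS\to 0$ as $R\to\infty$. Since $H$ is harmonic, its spherical mean is independent of $R$, giving $H\equiv 0$; thus the stated identity holds with $C_{1} = 0\geq 0$.

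The main obstacle is rigorously controlling the $o(|x|^{-2})$ corrections: beyond the leading logarithmic behaviour of $u$ at infinity, the subleading terms of both $\Delta u$ and $F$ must be estimated on spherical averages. I would address this either via a Kelvin-type inversion $x\mapsto x/|x|^{2}$ that converts the behaviour at infinity into a conic singularity at the origin where Theorem \ref{thm:Asymptotic expansion at singular points} applies directly, or by iterating on the radial ODE $\Delta_{r}^{2}\bar{u} = \bar{k}_{g}\overline{e^{4u}}$ satisfied by the spherical mean. If the sharp identification $C_{1} = 0$ proves cumbersome in full generality, the lemma's one-sided statement $C_{1}\geq 0$ still follows from a maximum-principle argument using only the definite sign of $F$ and the monotone behaviour of the relevant spherical averages.
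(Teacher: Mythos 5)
Your skeleton — subtract the Newtonian potential of $\bar{k}_{g}e^{4u}$ and the singular term $2\beta_{0}|x|^{-2}$, observe that the defect $H$ is harmonic across the origin, and then kill it — is reasonable, and the harmonicity step is essentially fine (the Dirac masses do cancel, given the local regularity of $u-\beta_{0}\log|x|$). The way you kill $H$, however, has two genuine gaps. First, you pass from $u(x)\sim-(2+\beta_{1})\log|x|$ to $\Delta u(x)=-2(2+\beta_{1})|x|^{-2}+o(|x|^{-2})$; an asymptotic relation cannot be differentiated, and this expansion of $\Delta u$ at infinity is exactly the content of Lemma \ref{lem:Lemma 2.8 Asymptotic expansion at infinity}, whose proof in the paper uses the present lemma (it is what gives the boundary control $|\Delta w|=o(|x|^{-2})$ after the inversion $x\mapsto x/|x|^{2}$ before Theorem \ref{thm:Asymptotic expansion at singular points} can be applied). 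So your concluding computation assumes what this lemma exists to provide, and your two fallback routes (Kelvin inversion plus Theorem \ref{thm:Asymptotic expansion at singular points}, or the radial ODE for spherical means) suffer from the same circularity: they need control of $\Delta u$, or of $\Delta w$ near the inverted singularity, that is only available downstream of Lemmas \ref{lem:Lemma 2.2} and \ref{lem:Lemma 2.5}. Second, even granting the spherical-mean computation, the mean value property over spheres centered at the origin yields only $H(0)=0$, not $H\equiv0$; to conclude that the harmonic defect is a constant you must supply a Liouville-type step with genuine growth control of $H$, and that is precisely the hard part of the proof, which your argument never addresses. (A smaller point: the lemma claims only $C_{1}\geq0$; your stronger assertion $C_{1}=0$ is consistent with (\ref{eq:Asymptotic expansion at infinity}) but is not established by your argument and is not what is needed here.)

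For comparison, the paper does not reprove this statement: it cites Lemmas 2.1--2.5 of \cite{lin1998classification}, after noting that a solution coming from an $H^{2}$ conic metric on $S^{4}$ satisfies Lin's hypotheses. Lin's route avoids any a priori knowledge of $\Delta u$ at infinity: one introduces the logarithmic potential $v(x)=-\frac{\bar{k}_{g}}{8\pi^{2}}\int\log\frac{|x-y|}{|y|}e^{4u(y)}\,dy$, which solves $\Delta^{2}v=\bar{k}_{g}e^{4u}$ and satisfies $\Delta v=-\frac{\bar{k}_{g}}{4\pi^{2}}\int|x-y|^{-2}e^{4u(y)}\,dy$, shows by a Liouville-type argument (using only $\int e^{4u}<\infty$ and one-sided logarithmic bounds on $u$) that the biharmonic remainder $u-\beta_{0}\log|x|-v$ is a polynomial of degree at most two, and identifies $-C_{1}$ with the (constant) Laplacian of that polynomial; the sign $C_{1}\geq0$ then follows because $C_{1}<0$ would force quadratic growth of $u$ and contradict the integrability of $e^{4u}$. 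If you want a self-contained proof, that potential-plus-Liouville structure is the missing ingredient; the averaging computation you propose can only serve afterwards, e.g.\ to evaluate the constant.
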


\begin{lem}
\label{lem:Lemma 2.5}$u-\beta_{0}\log|x|=-\frac{\bar{k}_{g}}{8\pi^{2}}\int_{\mathbb{R}^{4}}\log\frac{|x-y|}{|y|}e^{4u(y)}dy+C_{0}$
where $C_{0}$ is a constant. Besides, for any $\epsilon>0$ there
is an $R_{\epsilon}$ such that 
\begin{equation}
(-2-\beta_{1})\log|x|\leq u\leq(-2-\beta_{1}+\epsilon)\log|x|,\label{eq:Lin' second lemma}
\end{equation}
for $|x|\geq R_{\epsilon}$.
\end{lem}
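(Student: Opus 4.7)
The plan is to derive the integral representation by integrating Lemma \ref{lem:Lemma 2.2}, and then read off the asymptotic bounds from the representation. Define
\[
v(x):=-\frac{\bar{k}_{g}}{8\pi^{2}}\int_{\mathbb{R}^{4}}\log\frac{|x-y|}{|y|}e^{4u(y)}dy.
\]
First I would verify that this integral converges absolutely for every $x\in\mathbb{R}^{4}$: near $y=0$, $e^{4u(y)}\sim|y|^{4\beta_{0}}$ is locally integrable (as $\beta_{0}>-1$) and pairs with the locally integrable singularities of $\log|y|$ and $\log|x-y|$; near $y=\infty$, the a priori growth $u(y)\le-(2+\beta_{1}-\epsilon)\log|y|+C$ of the conic sphere setup gives $e^{4u}$ power decay of order $|y|^{-8-4\beta_{1}+O(\epsilon)}$, which tames the logarithmic factor. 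Using $\Delta_{x}\log|x-y|=2/|x-y|^{2}$ and differentiating under the integral (justified by the same dominants), one obtains $\Delta v(x)=-\frac{\bar{k}_{g}}{4\pi^{2}}\int e^{4u(y)}/|x-y|^{2}dy$. Combined with $\Delta(\beta_{0}\log|x|)=2\beta_{0}/|x|^{2}$ on $\mathbb{R}^{4}\setminus\{0\}$ and Lemma \ref{lem:Lemma 2.2}, the function $h:=u-\beta_{0}\log|x|-v$ satisfies $\Delta h=-C_{1}$ pointwise off the origin.

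Next I would extend this globally and apply Liouville. By Theorem \ref{thm:Asymptotic expansion at singular points} applied to the solution on $\mathbb{R}^{4}$, $u-\beta_{0}\log|x|$ has a continuous extension at $0$; $v$ is continuous at $0$ because the integrand is locally integrable. Thus $h$ is locally bounded near the origin, so $\Delta h=-C_{1}$ holds distributionally on all of $\mathbb{R}^{4}$, and $\tilde h(x):=h(x)+C_{1}|x|^{2}/8$ is globally harmonic. To control growth I would split the defining integral of $v$ into $\{|y|\le|x|/2\}$, $\{|x|/2<|y|<2|x|\}$, and $\{|y|\ge2|x|\}$. On the first region $|\log(|x-y|/|x|)|\le\log 2$ and the integrand tends to zero pointwise (dominated convergence); on the second a rescaling $z=y/|x|$ together with the decay of $e^{4u}$ shows the contribution is $O(|x|^{-4-4\beta_{1}+\epsilon})$; on the third the tail $\int_{|y|\ge2|x|}e^{4u}$ is likewise polynomially small. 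Using the Gauss--Bonnet identity $\frac{\bar{k}_{g}}{8\pi^{2}}\int e^{4u}dy=2+\beta_{0}+\beta_{1}$, these combine to give $v(x)=-(2+\beta_{0}+\beta_{1})\log|x|+O(1)$ at infinity. Together with the a priori growth of $u$, we get $\tilde h(x)=C_{1}|x|^{2}/8+O(\log|x|)$. A harmonic function on $\mathbb{R}^{4}$ of at most quadratic growth is a polynomial of degree $\le 2$, but every harmonic homogeneous quadratic is trace-free while $C_{1}|x|^{2}/8$ has trace $C_{1}/2$; hence $C_{1}=0$. Then $\tilde h=h$ is a bounded harmonic function, so $h\equiv C_{0}$ by the classical Liouville theorem, giving the integral representation.

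For the two-sided bound I would substitute the representation and use the asymptotic $v(x)=-(2+\beta_{0}+\beta_{1})\log|x|+O(1)$ derived above to obtain
\[
u(x)=-(2+\beta_{1})\log|x|+O(1)\quad\text{as }|x|\to\infty.
\]
Both the upper bound $u\le(-2-\beta_{1}+\epsilon)\log|x|$ and the lower bound are then immediate by absorbing the $O(1)$ into $\epsilon\log|x|$ once $|x|\ge R_{\epsilon}$.

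The step I expect to be the main obstacle is the growth estimate of $v$ in the second paragraph: the dominated convergence in the region $\{|y|\le|x|/2\}$ and the uniform control in the annular region $\{|x|/2<|y|<2|x|\}$ both need the precise power decay of $e^{4u}$ at infinity, and the trace-based exclusion of the quadratic term requires a growth of $h$ strictly slower than $|x|^{2}$ to keep the harmonic part genuinely subquadratic. Once the $O(\log|x|)$ bound is in hand, the algebraic trace argument cleanly forces $C_{1}=0$ and reduces the problem to bounded harmonic functions.
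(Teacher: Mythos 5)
Your argument is essentially correct, but it is worth noting that the paper does not actually prove this lemma: it simply cites Lemmas 2.1--2.5 of Lin's classification paper, observing only that a solution coming from an $H^{2}$ conic sphere satisfies Lin's hypotheses. What you have written is, in effect, a self-contained reconstruction of that potential-theoretic argument adapted to the conic setting: compare $u-\beta_{0}\log|x|$ with the logarithmic potential $v$, use Lemma \ref{lem:Lemma 2.2} to see that the difference solves $\Delta h=-C_{1}$, remove the singularity at the origin, and kill the quadratic term and the constant $C_{1}$ by a Liouville argument. The ingredients you invoke are all legitimately available before this lemma: the continuity of $u-\beta_{0}\log|x|$ at the origin and the two-sided bound $u(x)=-(2+\beta_{1})\log|x|+O(1)$ at infinity both follow from the regularity theory of Section 5 applied to the $H^{2}$ solution on the conic sphere and transferred through the stereographic projection (you should say this explicitly, since the convergence of $v$, the $O(\log|x|)$ control of $h$, and hence the whole Liouville step hinge on it, and citing Theorem \ref{thm:Asymptotic expansion at singular points} only covers the origin); the mass identity $\frac{\bar{k}_{g}}{8\pi^{2}}\int e^{4u}=2+\beta_{0}+\beta_{1}$ is Proposition \ref{prop:(Gauss-Bonnet-Chern)-Suppose-tha} together with the chosen normalization. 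Your trace argument forcing $C_{1}=0$ is a clean way to conclude, and the final "bounded harmonic" phrase should read "harmonic of sublinear growth," which is all you have ($h=O(\log|x|)$) and all you need.

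The one genuine mismatch is the last sentence about the two-sided bound. Your representation gives $u(x)=-(2+\beta_{1})\log|x|+O(1)$, which yields the upper bound $u\leq(-2-\beta_{1}+\epsilon)\log|x|$ and the lower bound $u\geq(-2-\beta_{1}-\epsilon)\log|x|$ for $|x|\geq R_{\epsilon}$; it does not yield the lower bound exactly as printed, $(-2-\beta_{1})\log|x|\leq u$, and no absorption of the $O(1)$ term can, since there is no $\epsilon$ slack on that side. In fact the printed lower bound cannot hold for the whole family of solutions: replacing $u$ by $u(\lambda x)+\log\lambda$ preserves the equation, the singular structure and the total volume, but shifts the additive constant in the expansion by $-(1+\beta_{1})\log\lambda$, so for large $\lambda$ the exact inequality fails. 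The statement should be read as in Lin, with $\epsilon$ on both sides, and that weaker form is all that is used later (Lemma \ref{lem:Lemma 2.8 Asymptotic expansion at infinity} only needs $|u+(2+\beta_{1})\log|x||=o(\log|x|)$ after inversion). With that reading, and with the source of the a priori decay at infinity made explicit, your proof is complete.
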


For the proof of Lemma \ref{lem:Lemma 2.2} and Lemma \ref{lem:Lemma 2.5},
see Lemma 2.1 - 2.5 in \cite{lin1998classification}. We should mention
that since we always assume that the solution $u$ comes from a $H^{2}$
function on $S^{4}$, $u$ satisfies assumptions in Lin's paper for
both lemmas. 

Now we derive an asymptotic expansion of $u$ at infinity.

\begin{lem}
\label{lem:Lemma 2.8 Asymptotic expansion at infinity}Let $u$ be
a solution of (\ref{eq:equation in sec 8,}). Then, $u$ has the following
asymptotic expansion as $\infty$:
\[
u(x)=-(2+\beta_{1})\log|x|+c+O(|x|^{-1})
\]
 and  
\begin{align}
\begin{cases}
-\Delta u(x)=|x|^{-2}(a_{0}+\sum_{4l(\beta_{1}+1)<1}a_{0,l}|x|^{-4l(\beta_{1}+1)}P_{0,l}(-\log|x|)+\sum_{i}a_{i}x_{i}|x|^{-2}\\
+\sum_{4l(\beta_{1}+1)<1}\sum_{i=1}^{4}a_{i,l}x_{i}|x|^{-4l(\beta_{1}+1)-2}P_{i,l}(-\log|x|))+O(|x|^{-4}),\\
-\frac{\partial}{\partial x_{i}}\Delta u(x)=a_{0}x_{i}|x|^{-4}+O(|x|^{-(3+\delta)}),\\
-\frac{\partial^{2}}{\partial x_{i}\partial x_{j}}\Delta u(x)=O(|x|^{-4}),
\end{cases}\label{eq:Asymptotic expansion at infinity}
\end{align}
for large $|x|$, where $c,\ 0<\delta<4(\beta_{1}+1)$ and $a_{i,l}$
are constants and $P_{i,l}$ are polynomials. Note that $a_{0}=2(2+\beta_{1})$
is positive.
\end{lem}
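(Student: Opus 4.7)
The plan is to reduce the asymptotic analysis at infinity to an analysis near an isolated conic singularity at the origin, where Theorem \ref{thm:Asymptotic expansion at singular points} already applies. The key tool is a Kelvin-type conformal inversion. Set $y=x/|x|^{2}$, so that $|y|=|x|^{-1}$, and define $v(y)=u(y/|y|^{2})-2\log|y|$, equivalently $v(y)=u(x)+2\log|x|$. The identity $|dx|^{2}=|y|^{-4}|dy|^{2}$ shows that $e^{2u(x)}|dx|^{2}=e^{2v(y)}|dy|^{2}$, so by the conformal covariance of the Paneitz operator together with the vanishing of $Q$-curvature of the flat background metric, $v$ solves $\Delta_{y}^{2}v=\bar{k}_{g}e^{4v}$ on a punctured neighbourhood of $y=0$. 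Lemma \ref{lem:Lemma 2.5} translates into $v(y)=\beta_{1}\log|y|+O(1)$ as $y\to 0$, so $y=0$ is a conic singularity of order $\beta_{1}$ for $v$.

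Next, set $w(y)=v(y)-\beta_{1}\log|y|$; it is bounded near $0$ and satisfies $\Delta_{y}^{2}w=\bar{k}_{g}|y|^{4\beta_{1}}e^{4w}$, which is precisely equation (\ref{eq:PDE in Asymptotic Expansion}) with $\beta=\beta_{1}$. Since the right-hand side lies in $L^{p}_{loc}$ for any $1<p<-1/\beta_{1}$, an elliptic bootstrap gives $w\in W^{4,p}_{loc}\hookrightarrow C^{4+4\beta_{1}-\epsilon}(B_{1}(0))$, verifying the hypothesis of Theorem \ref{thm:Asymptotic expansion at singular points}. Applying that theorem provides
\[
w(y)=\psi(y)+\sum_{l}q_{l}(y)|y|^{4l(\beta_{1}+1)}P_{l}(\log|y|),
\]
with $\psi\in C^{4,\gamma}$ near $0$ and $q_{l},P_{l}$ polynomials; only finitely many indices $l$ satisfy $4l(\beta_{1}+1)<1$ and contribute above the desired remainder.

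Translating back, $u(x)=v(y)-2\log|x|=-(2+\beta_{1})\log|x|+w(x/|x|^{2})$. Inserting the Taylor expansion $\psi(y)=\psi(0)+\sum_{i}a_{i}y_{i}+O(|y|^{2})$ with $y_{i}=x_{i}/|x|^{2}$ and $\log|y|=-\log|x|$ delivers the scalar asymptotic $u(x)=-(2+\beta_{1})\log|x|+c+O(|x|^{-1})$ with $c=\psi(0)$. To compute $\Delta u$, I would apply the conformal identity $\Delta_{x}f(y)=|y|^{4}\Delta_{y}f-4|y|^{2}y\cdot\nabla_{y}f$ (which follows from $|dx|^{2}=e^{-4\log|y|}|dy|^{2}$) to each summand in the expansion of $u$. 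The logarithmic term gives $-\Delta_{x}[-(2+\beta_{1})\log|x|]=2(2+\beta_{1})|x|^{-2}$, identifying $a_{0}=2(2+\beta_{1})$. Differentiating the smooth part $\psi$ produces the $a_{i}x_{i}|x|^{-4}$ contributions, and differentiating each $q_{l}(y)|y|^{4l(\beta_{1}+1)}P_{l}(\log|y|)$ produces terms of the announced form $|x|^{-4l(\beta_{1}+1)-2}P_{0,l}(-\log|x|)$ and $x_{i}|x|^{-4l(\beta_{1}+1)-4}P_{i,l}(-\log|x|)$. Taking one and two further $x$-derivatives gives the corresponding expansions of $\partial_{i}\Delta u$ and $\partial_{i}\partial_{j}\Delta u$.

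The principal obstacle I anticipate is the bookkeeping in this final step: tracking how the Taylor remainders of $\psi$ together with the generated log-polynomial terms from the $q_{l}$ sum transform under repeated application of the Kelvin-type Laplacian and its derivatives, and checking that nothing outside the announced families, and nothing of order larger than the stated remainders ($O(|x|^{-4})$ for $\Delta u$, $O(|x|^{-3-\delta})$ for its gradient, $O(|x|^{-4})$ for its Hessian), creeps in. Because the exponents $4l(\beta_{1}+1)$ are prescribed by Theorem \ref{thm:Asymptotic expansion at singular points} and the conformal identity above is explicit, this reduces to a careful but essentially algebraic verification rather than a new analytic difficulty.
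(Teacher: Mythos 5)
Your setup coincides with the paper's: invert via $y=x/|x|^{2}$, subtract the logarithmic part, and reduce to the equation (\ref{eq:PDE in Asymptotic Expansion}) with $\beta=\beta_{1}$ on a punctured ball, then quote Theorem \ref{thm:Asymptotic expansion at singular points}. The gap is in the middle step, which is exactly where the paper's proof does its real work. You assert that since the right-hand side is in $L^{p}_{loc}$ for $1<p<-1/\beta_{1}$, ``an elliptic bootstrap gives $w\in W^{4,p}_{loc}\hookrightarrow C^{4+4\beta_{1}-\epsilon}$.'' But $w$ is only known to solve the equation on $B_{1}\setminus\{0\}$, and interior elliptic regularity cannot be applied across the puncture: a solution on the punctured ball of $\Delta^{2}w=f$ with $f\in L^{p}$ may contain a singular biharmonic part such as $c\log|y|$, $c|y|^{-2}$, or derivatives of the fundamental solution, none of which lie in $W^{4,p}$ near $0$. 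One must first show that no such term is present, i.e.\ that the equation extends distributionally across the origin with no measure supported at $\{0\}$. The paper does this by writing $w=h+q$, where $h$ solves the inhomogeneous problem in $B_{1}$ with the same boundary data (so $h\in W^{4,p}\hookrightarrow C^{4+4\beta_{1}-\epsilon}$ by genuine interior/boundary regularity), and $q$ is biharmonic in the punctured ball with $q=\Delta q=0$ on $\partial B_{1}$, $|q|=o(\log|y|)$ and $|\Delta q|=o(|y|^{-2})$ at the origin; a removable-singularity/maximum-principle argument applied first to $\Delta q$ and then to $q$ gives $q\equiv0$. Your proposal contains no substitute for this step.

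Two related inaccuracies feed into the gap. First, you claim Lemma \ref{lem:Lemma 2.5} translates into $v(y)=\beta_{1}\log|y|+O(1)$; the bound (\ref{eq:Lin' second lemma}) only gives $0\le w(y)\le\epsilon\log(1/|y|)$ for each $\epsilon>0$, i.e.\ an $o(\log)$ control, not boundedness — boundedness of $w$ near $0$ is a conclusion of the lemma being proved, not an input. Second, you never use Lemma \ref{lem:Lemma 2.2}, whereas the paper needs it precisely to get $|\Delta w|=o(|y|^{-2})$ at the origin, which is what makes the maximum-principle step for $\Delta q$ work. Once the identification $w=h$ (hence $w\in C^{4+4\beta_{1}-\epsilon}(B_{1})$) is established, your final translation of the expansion of Theorem \ref{thm:Asymptotic expansion at singular points} back to $x$-coordinates, including $a_{0}=2(2+\beta_{1})$ from $\Delta\log|x|=2|x|^{-2}$, is the same computation the paper performs.
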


\begin{proof}
Let $w(x)=u\left(\frac{x}{|x|^{2}}\right)-(2+\beta_{1})\log|x|$.
By Lemma \ref{lem:Lemma 2.2} and Lemma \ref{lem:Lemma 2.5}, we see
that $w(x)$ satisfies
\[
\begin{cases}
\Delta^{2}w(x)=\bar{k}_{g}e^{4w}|x|^{4\beta_{1}} & in\ \mathbb{R}^{4}-\{0\},\\
|w(x)|=o(\log|x|) & as\ |x|\to0,\\
|\Delta w|=o(|x|^{-2}) & as\ |x|\to0.
\end{cases}
\]
Let $h(x)$ be a weak solution of 
\[
\begin{cases}
\Delta^{2}h(x)=\bar{k}_{g}e^{4w(x)}|x|^{4\beta_{1}} & in\ B_{1},\\
h(x)=w(x) & on\ \partial B_{1},\\
\Delta h(x)=\Delta w(x) & on\ \partial B_{1}.
\end{cases}
\]
By Lemma \ref{lem:Lemma 2.5}, $e^{4w(x)}|x|^{4\beta_{1}}$ is in
$L^{p}(B_{1})$ for $(-\beta_{1})^{-1}>p>1$. By the regularity theory
of elliptic equations, $\Delta h(x)\in W^{2,p}(B_{1})$ and $h(x)\in W^{4,p}(B_{1})$
and hence $h(x)\in C^{4\tau}$ for $0<\tau<(1+\beta_{1})$. Now, let
$q(x)=w(x)-h(x)$. Then it satisfies that 
\[
\begin{cases}
\Delta^{2}q=0 & in\ B_{1}-\{0\},\\
q=\Delta q=0 & on\ \partial B_{1},\\
|q(x)|=o(\log|x|),|\Delta q|=o(|x|^{-2}) & as\ |x|\to0.
\end{cases}
\]
Thanks to the asymptotic property, we can still apply maximum principle
to $\Delta q$ which implies $\Delta q\equiv0$ and similarly, $q\equiv0$.
Therefore, $w(x)=h(x)$. 

Applying the asymptotic expansion in Theorem \ref{thm:Asymptotic expansion at singular points},
the lemma follows immediately.
\end{proof}
We apply the moving plane method to prove the radial symmetry of solutions
with two conical singularities. Following the convention in the literature,
see for example \cite{gidas1979,caffarelli1989asymptotic,lin1998classification},
let $\lambda\in\mathbb{R}$, $T_{\lambda}=\{(x_{1},x_{2},x_{3},x_{4}):x_{1}=\lambda\}$,
$\Sigma_{\lambda}=\{x:x_{1}>\lambda\}$, and $x^{\lambda}=(2\lambda-x_{1},x_{2},x_{3},x_{4})$.
In order to initiate the moving plane in $x_{1}$ direction, we need
the following two lemmas.
\begin{lem}
Let $v$ be a positive function defined in a neighborhood of infinity
satisfying the asymptotic expansion (\ref{eq:Asymptotic expansion at infinity}).
Then there exists $\bar{\lambda}$ and $R>0$ such that 
\[
v(x)>v(x^{\lambda})
\]
holds for $\lambda<\bar{\lambda}$, $|x|\geq R$ and $x\in\Sigma_{\lambda}$.
\label{lem:Monontonicity lemma}
\end{lem}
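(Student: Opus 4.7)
The plan is to decompose $v$ via the asymptotic expansion of Lemma \ref{lem:Lemma 2.8 Asymptotic expansion at infinity} into its leading radial term plus a controlled perturbation, and verify that the comparison $v(x) > v(x^\lambda)$ follows from the leading term once $|\lambda|$ and $|x|$ are large enough. Writing $v(x) = a_0 |x|^{-2} + \psi(x)$ with $a_0 = 2(2+\beta_1) > 0$, the elementary identity
\[
|x^\lambda|^2 - |x|^2 = 4|\lambda|(x_1 - \lambda),
\]
valid and strictly positive for $\lambda < 0$ and $x \in \Sigma_\lambda$, gives
\[
a_0\bigl(|x|^{-2} - |x^\lambda|^{-2}\bigr) = \frac{4 a_0 |\lambda|(x_1-\lambda)}{|x|^2|x^\lambda|^2} > 0,
\]
so it suffices to dominate $|\psi(x) - \psi(x^\lambda)|$ by this positive main term.

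I would split the region $\Sigma_\lambda \cap \{|x| \geq R\}$ into two subregions. In the ``large reflection'' subregion $|x^\lambda|^2 \geq 2|x|^2$, the main term is at least $a_0|x|^{-2}/2$, and the crude estimate $|\psi(x)| + |\psi(x^\lambda)| = O(|x|^{-2-\epsilon})$ from the expansion (with $\epsilon > 0$ determined by $\beta_1$, up to log factors) is absorbed once $R$ is large enough, independently of $\lambda$. In the complementary subregion $|x^\lambda|^2 < 2|x|^2$, the main term is only of order $|\lambda|(x_1-\lambda)/|x|^4$, so crude size bounds do not suffice; one must treat each subleading term individually and extract the factor $|x^\lambda|^2 - |x|^2 = 4|\lambda|(x_1-\lambda)$. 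For radial and logarithmic corrections this follows from
\[
|x|^{-\alpha} - |x^\lambda|^{-\alpha} = O\!\left(\frac{|\lambda|(x_1-\lambda)}{|x|^{\alpha+2}}\right) \qquad (\alpha > 2),
\]
which is smaller than the main term by the factor $|x|^{2-\alpha}$. For the dipole contribution $\sum_i a_i x_i |x|^{-4}$ and the $O(|x|^{-4})$ remainder, the mean value theorem along the segment $\xi(t) = ((2t-1)x_1 + 2(1-t)\lambda, x_2, x_3, x_4)$, combined with the derivative estimate from Lemma \ref{lem:Lemma 2.8 Asymptotic expansion at infinity}, yields a bound of order $(x_1-\lambda)/|x|^4$, smaller than the main term by the factor $|\lambda|^{-1}$.

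Choosing $\bar\lambda$ sufficiently negative and $R$ sufficiently large then makes all the $|\lambda|^{-1}$- and $|x|^{-\epsilon}$-type prefactors arbitrarily small, so the main term prevails and $v(x) > v(x^\lambda)$ uniformly. The main technical obstacle is organizing the case split so that the estimates close up in the transition between the two subregions, and verifying that the derivative estimates on $\psi$ hold along the entire mean-value segment --- in particular at points where $|\xi(t)|$ might be much smaller than $|x|$, which can happen when the segment passes near the origin. The overall structure of the argument mirrors the moving-plane proofs at infinity in \cite{caffarelli1989asymptotic} and \cite{lin1998classification}, adapted here to accommodate the logarithmic and dipole corrections coming from the conical singularity at the origin.
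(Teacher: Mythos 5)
Your skeleton is the right one --- it is the argument of Lemma 2.3 of \cite{caffarelli1989asymptotic}, which is in fact all the paper offers for this lemma (it cites Caffarelli--Gidas--Spruck and remarks that the leading terms agree, without writing out a proof). The genuine gap is the step you yourself flag and leave open, and it is not cosmetic: your treatment of the dipole and $O(|x|^{-4})$ contributions rests on the mean value theorem along the segment $\xi(t)$ joining $x^{\lambda}$ to $x$, and that segment leaves the neighborhood of infinity exactly when $x_{1}>0$ and $|(x_{2},x_{3},x_{4})|$ is small --- it then passes arbitrarily close to the origin, where (\ref{eq:Asymptotic expansion at infinity}) gives no information and where, in the application, $v=-\Delta u$ is singular. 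The repair reorganizes the estimate rather than patching it: the dipole terms should not be fed to the mean value theorem at all; write $x_{1}|x|^{-4}-(2\lambda-x_{1})|x^{\lambda}|^{-4}=x_{1}\bigl(|x|^{-4}-|x^{\lambda}|^{-4}\bigr)+2(x_{1}-\lambda)|x^{\lambda}|^{-4}$, where the first piece carries the factor $|x^{\lambda}|^{2}-|x|^{2}=4|\lambda|(x_{1}-\lambda)$ and is smaller than the main term by $1/|x|$, while the second is smaller by $1/|\lambda|$. For the genuine remainder, the mean value theorem is only needed when $x_{1}-\lambda\le C/|\lambda|$: otherwise $|\lambda|(x_{1}-\lambda)\ge C$ and the crude size bound $O(|x|^{-4})$ already loses to the main term $4a_{0}|\lambda|(x_{1}-\lambda)/(|x|^{2}|x^{\lambda}|^{2})$; and when $x_{1}-\lambda\le C/|\lambda|$ one has $x_{1}<0$, so every point of the segment satisfies $|\xi|\ge|x|\ge R$ and the expansion is available along it.

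There is a second quantitative gap even on an admissible segment: your claimed bound of order $(x_{1}-\lambda)/|x|^{4}$ does not follow from the derivative estimate you invoke, since the second line of (\ref{eq:Asymptotic expansion at infinity}) only controls the gradient error by $O(|x|^{-(3+\delta)})$, and comparing $(x_{1}-\lambda)|x|^{-3-\delta}$ with the main term leaves a ratio of size $|x|^{1-\delta}/|\lambda|$, which is not small for $|x|$ large at fixed $\lambda$. To close the near-plane case you must first peel off the explicit subleading log and dipole corrections (handled by the radial mean value bound and the algebra above), and then upgrade the gradient of the remaining $O(|x|^{-4})$ error to $O(|x|^{-4})$, for instance by interpolating its size against the second-derivative bound in the third line of (\ref{eq:Asymptotic expansion at infinity}) on balls of unit radius. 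With these two repairs your outline becomes a complete proof, essentially the one in \cite{caffarelli1989asymptotic} that the paper appeals to.
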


\begin{lem}
Suppose v satisfies the assumption of Lemma \ref{lem:Monontonicity lemma}
and $v(x)>v(x^{\lambda_{0}})$ for $x\in\Sigma_{\lambda_{0}}$. Assume
$v(x)-v(x^{\lambda_{0}})$ is superharmonic in $\Sigma_{\lambda_{0}}$.
Then there exist $\epsilon>0,S>0$ such that the followings hold.

(i) $v_{x_{1}}>0$ in $|x_{1}-\lambda_{0}|<\epsilon$ and $|x|>S$.

(ii) $v(x)>v(x^{\lambda})$ in $x_{1}\geq\lambda_{0}+\frac{\epsilon}{2}>\lambda$
and $|x|>S$\\
for all $x\in\Sigma_{\lambda},\lambda\leq\lambda_{1}$ with $|\lambda_{1}-\lambda_{0}|<c\epsilon$,
where $c=c(\lambda_{0},v)$ is a small positive number.\label{lem: Abnormal points are Bounded sequence}
\end{lem}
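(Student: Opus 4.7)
The plan is to prove (i) via the Hopf boundary point lemma applied to $w_{\lambda_{0}}(x):=v(x)-v(x^{\lambda_{0}})$, then derive (ii) as a perturbation of (i) combined with the asymptotic expansion of Lemma \ref{lem:Lemma 2.8 Asymptotic expansion at infinity}.

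For (i), $w_{\lambda_{0}}$ is non-negative in $\Sigma_{\lambda_{0}}$, vanishes on $T_{\lambda_{0}}$, and satisfies $-\Delta w_{\lambda_{0}}\geq 0$ there by hypothesis. The strong maximum principle gives strict positivity of $w_{\lambda_{0}}$ in the interior, and at each $x\in T_{\lambda_{0}}$ the Hopf boundary point lemma (using the half-space interior ball condition) yields $\partial_{x_{1}}w_{\lambda_{0}}(x)>0$. Since reflection in $T_{\lambda_{0}}$ fixes $T_{\lambda_{0}}$, this equals $2v_{x_{1}}(x)$, so $v_{x_{1}}>0$ pointwise on $T_{\lambda_{0}}$. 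To upgrade this to strict positivity on a strip $\{|x_{1}-\lambda_{0}|<\epsilon\}\cap\{|x|>S\}$, I will combine continuity of $v_{x_{1}}$ on any compact annulus $T_{\lambda_{0}}\cap\{|x|\leq M\}$ (giving a uniform strip width there) with the explicit leading behavior of $v_{x_{1}}$ at infinity obtained by differentiating the expansion of Lemma \ref{lem:Lemma 2.8 Asymptotic expansion at infinity} in $x_{1}$; matching the two regimes produces a single $\epsilon>0$ and $S>0$.

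For (ii), fix $x$ with $x_{1}\geq\lambda_{0}+\epsilon/2$, $|x|>S$, and $\lambda\leq\lambda_{1}$, and decompose
\begin{equation*}
v(x)-v(x^{\lambda})=\bigl[v(x)-v(x^{\lambda_{0}})\bigr]+\bigl[v(x^{\lambda_{0}})-v(x^{\lambda})\bigr].
\end{equation*}
The first bracket is strictly positive by hypothesis; Lemma \ref{lem:Lemma 2.8 Asymptotic expansion at infinity} gives its leading order, of size $|\lambda_{0}|(x_{1}-\lambda_{0})|x|^{-4}$ in the relevant regime $\lambda_{0}<0$. For the second, $(x^{\lambda_{0}})_{1}-(x^{\lambda})_{1}=2(\lambda_{0}-\lambda)$, so the mean value theorem gives $v(x^{\lambda_{0}})-v(x^{\lambda})=2(\lambda_{0}-\lambda)v_{x_{1}}(\xi)$ with $\xi$ on the segment between $x^{\lambda}$ and $x^{\lambda_{0}}$; by taking $c$ small this segment lies in the strip from (i), hence $v_{x_{1}}(\xi)>0$. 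For $\lambda\leq\lambda_{0}$ the second bracket is then non-negative and the sum strictly positive, giving the inequality. For $\lambda_{0}<\lambda\leq\lambda_{1}$ the second bracket is negative of magnitude of order $c\epsilon\,|x|^{-4}$; choosing $c=c(\lambda_{0},v)$ sufficiently small makes the first bracket dominate it uniformly in $|x|>S$, again yielding $v(x)>v(x^{\lambda})$.

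The principal obstacle is the uniform strip construction in (i): Hopf supplies only pointwise positivity of $v_{x_{1}}$ on $T_{\lambda_{0}}$, and promoting this to a strip of fixed width across the whole infinite plane requires careful use of Lemma \ref{lem:Lemma 2.8 Asymptotic expansion at infinity}, including its logarithmic-polynomial tail corrections, to verify that $v_{x_{1}}$ retains an unambiguous sign throughout such a strip at spatial infinity. This is exactly what pins down the admissible $\epsilon$ and $S$, and subsequently the admissible $c$ in (ii).
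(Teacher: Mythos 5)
The paper gives no proof of this lemma at all: it is quoted verbatim from Caffarelli--Gidas--Spruck and the authors refer to Lemma 2.4 of \cite{caffarelli1989asymptotic}, so your argument has to stand on its own, and its central step does not. The gap is in (i), precisely at the point you yourself flag as the principal obstacle. On the strip $|x_{1}-\lambda_{0}|<\epsilon$ the quantity $v_{x_{1}}=-\partial_{x_{1}}\Delta u$ cannot have its sign read off from Lemma \ref{lem:Lemma 2.8 Asymptotic expansion at infinity}: the term obtained by differentiating the radial leading part carries a factor $x_{1}$, which is bounded on the strip, so it is only of size $|x|^{-4}$, and this is not larger than the error the expansion permits ($O(|x|^{-(3+\delta)})$ with merely $0<\delta<4(1+\beta_{1})$, possibly $\delta<1$) nor than the dipole terms $a_{i}x_{i}|x|^{-4}$, whose coefficients have no sign control (as printed, the gradient line of (\ref{eq:Asymptotic expansion at infinity}) would even give the wrong sign on the strip, since $x_{1}\approx\lambda_{0}<0$). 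Indeed, even for an exactly harmonic expansion the sign is undetermined there: $v=a_{0}|x|^{-2}+\mu x_{1}|x|^{-4}$ with $\mu$ sufficiently negative has $v_{x_{1}}<0$ on the strip. Hence the hypothesis that $w:=v-v(\cdot^{\lambda_{0}})$ is positive and superharmonic must be used \emph{quantitatively}, not just through a pointwise Hopf lemma: one compares $w$ in $\Sigma_{\lambda_{0}}\cap\{|x|>S_{0}\}$ with a half-space Poisson-kernel type harmonic barrier $\delta(x_{1}-\lambda_{0})|x-\bar{x}|^{-4}$ to get $w(x)\geq\delta(x_{1}-\lambda_{0})|x|^{-4}$, hence $v_{x_{1}}\geq\tfrac{\delta}{2}|x|^{-4}$ on $T_{\lambda_{0}}$ for $|x|$ large, and only a lower bound with this rate, played against the Hessian bound $|\partial^{2}\Delta u|=O(|x|^{-4})$, survives propagation across a strip of fixed width $\epsilon$. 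Your Hopf step yields positivity of $v_{x_{1}}$ at each point of the noncompact plane but no rate, so no single $\epsilon,S$ can be extracted by ``matching'' with the expansion; this is the missing idea, and it is exactly the content of the CGS proof.

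In (ii) the decomposition is the right one, but both brackets are mishandled. You assert that the first bracket has leading order $|\lambda_{0}|(x_{1}-\lambda_{0})|x|^{-4}$, but the hypothesis gives only positivity; a lower bound of that form is precisely the barrier estimate above, which you never establish (the expansion of Lemma \ref{lem:Lemma 2.8 Asymptotic expansion at infinity} is centered at the origin and does not control the reflected difference from below). More seriously, the mean value segment between $x^{\lambda}$ and $x^{\lambda_{0}}$ has first coordinate near $2\lambda_{0}-x_{1}\leq\lambda_{0}-\epsilon/2$, so for every $x$ with $x_{1}>\lambda_{0}+\epsilon$ it does \emph{not} lie in the strip of (i), and the sign of $v_{x_{1}}$ there is unknown; in addition the lemma allows all $\lambda\leq\lambda_{1}$, so $|\lambda-\lambda_{0}|$ need not be of order $c\epsilon$ as your estimate assumes. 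A correct treatment bounds the second bracket by splitting the expansion into its radial part, which contributes through the radius difference $\bigl||x^{\lambda_{0}}|-|x^{\lambda}|\bigr|=O\bigl(|\lambda-\lambda_{0}|(|x_{1}|+1)/|x|\bigr)$, and its dipole part, which contributes through its $x_{1}$-derivative, both of size $O(|\lambda-\lambda_{0}|\,|x|^{-4})$ in the relevant regime, while very negative $\lambda$ must be handled separately (e.g.\ via Lemma \ref{lem:Monontonicity lemma}). None of this is carried out, so the proposal as written does not prove the lemma.
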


Both lemmas are contained in the celebrated paper by Caffarelli-Gidas-Spruck
\cite{caffarelli1989asymptotic}. For the proofs, see Lemma 2.3 and
2.4 in \cite{caffarelli1989asymptotic}. We should remind the readers
that although our asymptotic expansion is not the exact form in the
above paper, the leading terms are the same. Hence the argument in
\cite{caffarelli1989asymptotic} can be applied. 
\begin{proof}
[Proof of Theorem \ref{thm:Uniquness result}]For any $\lambda\not=0$,
let $w_{\lambda}(x)=u(x)-u(x^{\lambda})$ in $\Sigma_{\lambda}$.
Then $w_{\lambda}(x)$ satisfies
\[
\begin{cases}
\Delta^{2}w_{\lambda}=b_{\lambda}(x)w_{\lambda} & x\in\Sigma_{\lambda},\\
w_{\lambda}=\Delta w_{\lambda}=0 & x\in T_{\lambda},
\end{cases}
\]
where 
\[
b_{\lambda}(x)=\bar{k}_{g}\frac{e^{4u(x)}-e^{4u(x^{\lambda})}}{u(x)-u(x^{\lambda})}>0.
\]
By our assumption at $0$, 
\begin{equation}
w_{\lambda}(x)=\beta_{0}\log|x|+O(1).\label{eq:w_=00005Clambdabigatx^*_=00005Clambda}
\end{equation}
By Lemma \ref{lem:Monontonicity lemma}, $-\Delta w_{\lambda}>0$
for $x\in\Sigma_{\lambda}$, $\lambda\leq\bar{\lambda}<0,\ |x|>R$.
Since $v(x)=-\Delta u>0$, there is $\bar{\lambda}_{1}\leq\bar{\lambda}$
such that $v(x^{\lambda})<v(x)$ for $|x|<R$ and $\lambda<\bar{\lambda}_{1}$.
Hence 
\[
-\Delta w_{\lambda}(x)>0,
\]
in $\Sigma_{\lambda}$ for $\lambda\leq\bar{\lambda}_{1}.$ We remark
that the singularity of $u$ at $0$ does not affect the computation.
By Lemma \ref{lem:Lemma 2.8 Asymptotic expansion at infinity}, $\lim_{|x|\to\infty}w_{\lambda}(x)=0$.
By (\ref{eq:w_=00005Clambdabigatx^*_=00005Clambda}) we can choose
$\delta$ small such that $w_{\lambda}>0$ on the boundary $\partial B_{\delta}(0)$.
Apply maximum principle in $\Sigma_{\lambda}-B_{\delta}(0)$ , we
have $w_{\lambda}(x)>0$ in $\Sigma_{\lambda}-B_{\delta}(0)$ for
$\lambda\leq\bar{\lambda}_{1}$. Then taking $\delta\to0$, we have
\begin{equation}
w_{\lambda}(x)>0,x\in\Sigma_{\lambda}.\label{eq:w_=00005Clambda=00300B0}
\end{equation}
Let 
\[
\lambda_{0}=\sup\{\lambda<0:v(x^{\mu})\leq v(x),\ x\in\Sigma_{\mu}\ for\ \mu\leq\lambda\}.
\]
If $\lambda_{0}=0$ then we are done. Otherwise, we claim that 
\begin{equation}
u(x)\equiv u(x^{\lambda_{0}})\label{eq:symmetry of u in sec 8}
\end{equation}
for $x\in\Sigma_{\lambda_{0}}$. This also implies that $\lambda_{0}=0$.
We argue by contradiction. Suppose that $\lambda_{0}<0$ and $w_{\lambda_{0}}\not\equiv0$
in $\Sigma_{\lambda_{0}}$. By continuity, $\Delta w_{\lambda_{0}}\leq0(=-\infty\ at\ 0)$
in $\Sigma_{\lambda_{0}}$. Since $w_{\lambda_{0}}(x)\to0$ as $|x|\to\infty$,
by strong maximum principle $w_{\lambda_{0}}>0$ in $\Sigma_{\lambda_{0}}$.
Then we have 
\begin{equation}
\Delta^{2}w_{\lambda_{0}}=\bar{k}_{g}(e^{4u}(x)-e^{4u(x^{\lambda_{0}})})>0.\label{eq:w_=00005Clambda_0 subharmonic}
\end{equation}
Hence $\Delta w_{\lambda_{0}}$ is subharmonic. Similar to (\ref{eq:w_=00005Clambda=00300B0}),
we apply strong maximum principle in $\Sigma_{\lambda_{0}}$ to get
$\Delta w_{\lambda_{0}}(x)<0$ in $\Sigma_{\lambda_{0}}$. By the
definition of $\lambda_{0}$ , there is a sequence $\lambda_{n}\downarrow\lambda_{0}$
and $\lambda_{n}<0$ such that $\sup_{\Sigma_{\lambda_{n}}}\Delta w_{\lambda_{n}}>0$.
Since $\lim\limits _{|x|\to\infty}\Delta w_{\lambda_{n}}(x)=0$, there
exists $z_{n}\in\Sigma_{\lambda_{n}}$ such that 
\[
\Delta w_{\lambda_{n}}(z_{n})=\sup_{x\in\Sigma_{\lambda_{n}}}\Delta w_{\lambda_{n}}(x)>0.
\]
Note that clearly $z_{n}\not=0$ and at each $z_{n}$, 
\[
\nabla\Delta w_{\lambda_{n}}(z_{n})=0.
\]
By Lemma \ref{lem: Abnormal points are Bounded sequence}, $z_{n}$
are bounded. Suppose that $z_{0}$ is a limit point of $z_{n}$. $z_{0}$
can not be $0$ since $\Delta w_{\lambda}(z)\to-\infty$ as $z\to0$.
If $z_{0}\in\Sigma_{\lambda_{0}}$, by continuity, $\Delta w_{\lambda_{0}}(z_{0})=0$.
This contradicts with the fact that $\Delta w_{\lambda_{0}}<0$ in
$\Sigma_{\lambda_{0}}$. If $z_{0}\in T_{\lambda_{0}}$, then $\nabla(\Delta w_{\lambda_{0}}(z_{0}))=0$,
which along with (\ref{eq:w_=00005Clambda_0 subharmonic}) contradicts
to Hopf's lemma at $z_{0}$. Hence the claim is proved.

By (\ref{eq:symmetry of u in sec 8}), we have 
\[
u(x_{1},x_{2},x_{3},x_{4})=u(-x_{1},x_{2},x_{3},x_{4}).
\]
By choosing different coordinate systems, we get that $u$ is radial
symmetric. We have thus proved Theorem \ref{thm:Uniquness result}.
\end{proof}

\section*{Appendix}
\begin{proof}
[Proof of Lemma \ref{lem:polynomial lemma}] For each degree $m$,
we only have to consider homogeneous polynomials in $\mathcal{P}_{m}$.We
discuss 2 cases where $\beta\neq-\frac{1}{2}$ or $\beta=-\frac{1}{2}$.

Case 1. Suppose that $\beta\not=-\frac{1}{2}$. Let $p(x)\in\mathcal{P}_{m}$
. By Euler formula, $x_{i}D_{i}p(x)=mp(x)$. 
\begin{align*}
\Delta(p(x)r^{4\beta+2}) & =r^{4\beta}\left(r^{2}\Delta p(x)+(4\beta+2)\left(4\beta+4+2m\right)p(x)\right)
\end{align*}
So $\Delta\left(p(x)r^{4\beta+2}\right)=f(x)r^{4\beta}$ if 
\begin{equation}
r^{2}\Delta p(x)+(4\beta+2)(4\beta+4+2m)p(x)=f(x).\label{eq:beta =00005Cnot=00003D-1/2 connecting 1, app}
\end{equation}
Note if $\beta\in(-1,0)$ and $\beta\not=-1/2$, then $(4\beta+2)(4\beta+4+2m)$
can not be an eigenvalue of $r^{2}\Delta$. Then by Lemma \ref{lem:(e.g.-)-Lee-Park},
we see that there exists a $p(x)$ such that 
\begin{equation}
\Delta(p(x)r^{4\beta+2})=f(x)r^{4\beta}.\label{eq:beta =00005Cnot=00003D-1/2 connceting 2;app}
\end{equation}
Now let $q(x)$ be a homogeneous polynomial with degree $m$. Then
\[
\Delta(q(x)r^{4\beta+4})=r^{4\beta+2}\left(r^{2}\Delta q(x)+(4\beta+4)\left(4\beta+6+2m\right)q(x)\right)
\]
Apply Lemma \ref{lem:(e.g.-)-Lee-Park} again, there is a polynomial
such that $\Delta\left(q(x)r^{4\beta+4}\right)=p(x)r^{4\beta+2}$.

To solve 
\begin{equation}
\Delta^{2}g=f(x)(\log r)^{k}|x|^{4\beta},\label{eq:Delta ^2g=00003Df, connecting eq;app}
\end{equation}
 we first compute
\begin{align}
\Delta[p(x)r^{4\beta+2}\log r] & =r^{4\beta}\log r\left(r^{2}\Delta p(x)+(4\beta+2)(2m+4\beta+4)p(x)\right)\label{eq:beta=00005Cnot=00003D-1/2;=00005Clogr}\\
 & +r^{4\beta}\phi(x),\nonumber 
\end{align}
where $\phi(x)\in\mathcal{P}_{m-2}$. First, we can solve 
\[
r^{2}\Delta p_{1}(x)+(4\beta+2)(2m+4\beta+4)p_{1}(x)=f(x),
\]
as in (\ref{eq:beta =00005Cnot=00003D-1/2 connecting 1, app}). We
can also find a polynomial $p_{2}(x)$ such that $\Delta(p_{2}(x)r^{4\beta+2})=\phi(x)r^{4\beta}$.
Thus, 
\[
\Delta[r^{4\beta+2}(p_{1}\log r-p_{2})]=f(x)r^{4\beta}\log r.
\]
By a similar argument, there exist $q_{1},q_{2}$ such that 
\[
\Delta[r^{4\beta+4}(q_{1}(x)\log r+q_{2}(x))]=(p_{1}(x)|x|^{2}\log r-p_{2}(x)|x|^{2})r^{4\beta},
\]
hence
\[
\Delta^{2}[(q_{1}(x)\log r+q_{2}(x))r^{4\beta+4}]=f(x)(\log r)r^{4\beta}.
\]
This gives the solution $g=r^{4\beta+4}(q_{1}(x)\log r+q_{2}(x))$
for $k=1$ in (\ref{eq:Delta ^2g=00003Df, connecting eq;app}). We
use induction for $k\geq2$. Suppose that we can find solutions of
(\ref{eq:log^l}) for $0\leq l\leq k-1$
\begin{align}
\Delta\left(p(x)r^{4\beta+2}(\log r)^{k}\right) & =r^{4\beta}(\log r)^{k}\left(r^{2}\Delta p(x)+(4\beta+2)(4\beta+4+2m)p(x)\right)\label{eq:=00005CDeltap(x)r^=00007B4=00005Cbeta+2=00007D(=00005Clogr)^=00007Bk=00007D=00005Crightconnectingeq1; app}\\
 & +\sum_{j=0}^{k-1}r^{4\beta}\phi_{j}(x)(\log r)^{j},\nonumber 
\end{align}
where $\phi_{j}(x)$ are polynomials. Then we can find $p(x)$ such
that 
\[
r^{2}\Delta p(x)+(4\beta+2)(4\beta+4+2m)p(x)=f(x).
\]
In the remaining terms $\sum_{j=0}^{k-1}r^{4\beta}\phi_{j}(x)(\log r)^{j}$,
the degrees of $\log r$ are strictly smaller than $k$. Therefore,
the remaining terms can be solved by induction and there exist $\{p_{l}(x)\}$
such that 
\[
\Delta\sum_{l}p_{l}r^{4\beta+2}(\log r)^{l}=f(x)r^{4\beta}(\log r)^{k}.
\]
Repeat the argument for each $\tilde{f}(x)=p_{l}(x)|x|^{2}$ and we
can solve (\ref{eq:Delta ^2g=00003Df, connecting eq;app}).

Case 2. $\beta=-\frac{1}{2}$. If homogeneous degree $m=0$, we see
that $\Delta^{2}(\frac{c}{16}r^{2}\log r)=cr^{-2}$. So this is true
for degree $0$ polynomial. If $m=1$, direct computation shows that
\[
\Delta^{2}(\frac{a_{i}x_{i}}{48}r^{2}\log r)=a_{i}x_{i}.
\]
If $m=2$, we have for $i\not=j$,
\[
\Delta^{2}(x_{i}x_{j}r^{2}\log r)=96x_{i}x_{j}r^{-2}.
\]
For $i=j$, we compute
\[
\Delta^{2}(x_{i}^{2}r^{2}\log r)=32+96x_{i}^{2}r^{-2}+48\log r.
\]
Note that $\Delta^{2}(r^{4}\log r)=7\times64+3\times128\log r$. Since
$\Delta^{2}r^{4}=192$, we can still find a solution for $\Delta^{2}(q(x)r^{2})=x_{i}^{2}r^{2}$
in the form of (\ref{eq:form beta=00003D-1/2}).

For functions in the form of $f(x)r^{-2}(\log r)^{l}$ in (\ref{eq:beta=00003D-1/2 and (logr)^k}),
we argue by induction with respect to $k$. Note the above argument
is for $l=0$. Suppose that for $0\leq l\leq k-2,$ we have a solution
for (\ref{eq:beta=00003D-1/2 and (logr)^k}). We prove for $l=k-1$.
Any quadratic polynomial $f(x)$ is a linear combination of $1,x_{i},x_{i}x_{j},x_{i}^{2}$.
Thus, we only have to consider these 4 subcases. 

Subcase 1, $f(x)=1$. Take test function $cr^{2}(\log r)^{k}$. Compute
\begin{align}
\Delta^{2}[cr^{2}(\log r)^{k}] & =cr^{-2}(c_{1}(\log r)^{k-4}+c_{2}(\log r)^{k-3}+c_{3}(\log r)^{k-2}+2^{4}(\log r)^{k-1}),\label{eq:beta =00003D-1/2 aonnecting eq; app}
\end{align}
where $c_{i}$ are polynomials of $k$ and $c_{i}=0$ for $k=1,2,...,4-i$.
Let $c=2^{-4}$ and the first term can be cancelled. The remaining
terms $f-\Delta^{2}[cr^{2}(\log r)^{k}]$ in (\ref{eq:beta =00003D-1/2 aonnecting eq; app})
are lower degrees terms and can be solved by induction. 

Subcase 2, $f(x)=x_{i}$. We consider a test function of the form
$cx_{i}r^{2}(\log r)^{k}$. Direct computation shows
\begin{align*}
\Delta^{2}(cx_{i}r^{2}(\log r)^{k}) & =ck\frac{x_{i}}{r^{2}}\log^{k-4}\left(r\right)(\sum_{i=0}^{2}c_{i}(\log r)^{i}+48\log^{3}\left(r\right)).
\end{align*}
Likewise, take $c=(48k)^{-1}$. Then, $f(x)-\Delta^{2}(cx_{i}r^{2}(\log r)^{k})$
has lower degrees in $\log r$ and we can solve the remaining terms
by induction. 

Subcase 3, $f(x)=x_{i}x_{j},\ i\not=j$. By taking out a test function
in the form of $cx_{i}x_{j}r^{2}(\log r)^{k}$ with a proper choice
of $c$, we may reduce the top degree of $\log r$. Then, the lower
degree terms can be solved by induction. The argument is similar to
subcase 2.

Subcase 4, $f(x)=x_{i}^{2}$. We compute directly,
\begin{align}
\Delta^{2}(x_{i}^{2}r^{2}(\log r)^{k}) & =x_{i}^{2}r^{-2}(\log r)^{k-1}k\times96+x_{i}^{2}r^{-2}kP(\log r)+Q(\log r),\label{eq:=00005CDeltax_i^2r^2(=00005Clogr)^k, app}
\end{align}
where $P$ and $Q$ are polynomials in one variable and $\deg P(x)\leq k-2,\deg Q(x)\leq k$.
Note that the degree of $P(x)$ is less than $k-1$. Thus, by induction
assumption there exists a function $\bar{P}(x)$ in the form of (\ref{eq:beta=00003D-1/2 and (logr)^k})
such that 
\begin{equation}
\Delta^{2}(\bar{P}(x)r^{2})=x_{i}^{2}r^{-2}P(\log r).\label{eq:Delta^2 P}
\end{equation}
For $Q(x)$ in (\ref{eq:=00005CDeltax_i^2r^2(=00005Clogr)^k, app}),
if it has degree $k$, there is a test function in the form of $cr^{4}(\log r)^{k}$.
We compute
\begin{align*}
\Delta^{2}\left(r^{4}(\log r)^{k}\right) & =k\log^{k-4}\left(r\right)(\left(k-1\right)(k-2)(k-3)\\
 & +b_{1}k\left(k-1\right)(k-2)\log\left(r\right)\\
 & +b_{2}(k-1)k\log^{2}\left(r\right)+b_{3}k\log^{3}\left(r\right))+12\times16\log^{k}\left(r\right),
\end{align*}
where $b_{i}$ are constants independent of $k$. Suppose that $a_{k}\not=0$
is the coefficient of the top degree term of $Q(x)$. Then, 
\begin{equation}
R(\log r)=Q(\log r)-\Delta^{2}\left(\frac{a_{k}}{12\times16}r^{4}(\log r)^{k}\right)\label{eq:R(=00005Clogr)}
\end{equation}
is a polynomial in $\log r$ with degree less than $k$. Thus, we
can apply the induction assumption to $R(\log r)$ to show that there
exists a polynomial $\bar{Q}(x)$ with $\deg\bar{Q}(x)\leq k$ and
\begin{equation}
\Delta^{2}(r^{4}\bar{Q}(\log r))=Q(\log r).\label{eq:=00005CDelta=00005CbarQ}
\end{equation}
Now let 
\[
\tilde{f}(x)=\frac{1}{96k}(x_{i}^{2}r^{2}(\log r)^{k}-\bar{P}(x)r^{2}-r^{4}\bar{Q}(\log r)).
\]
By (\ref{eq:=00005CDeltax_i^2r^2(=00005Clogr)^k, app}),(\ref{eq:Delta^2 P})
and (\ref{eq:=00005CDelta=00005CbarQ}), 
\[
\Delta^{2}\tilde{f}(x)=x_{i}^{2}r^{-2}(\log r)^{k-1}
\]
This completes the last case.
\end{proof}
\bibliographystyle{amsalpha}
\bibliography{PDE-Fang-Ma}

\end{document}